\newtheoremstyle{break}
{}
{}
{\itshape}
{}
{\bfseries}
{.}
{\newline}
{}
\theoremstyle{break}
\newtheorem{prop}{\textbf{Proposition}}[section]
\newtheorem{definition}{\textbf{Definition}}[section]
\newtheorem{theorem}{\textbf{Theorem}}[section]
\newtheorem{lemma}[theorem]{\textbf{Lemma}}
\newtheorem{claim}{\underline{\textbf{Claim}}}[section]
\newcommand{\R}{\mathbb{R}}
\newcommand{\N}{\mathbb{N}}
\newcommand{\abs}[1]{\lvert#1\rvert }
\newcommand{\norm}[1]{\left\lVert#1\right\rVert}
\begin{document}
	\title{Systems of Fully Nonlinear Degenerate Elliptic Obstacle problems  \\with Dirichlet boundary conditions}
	\author{S. Andronicou and E. Milakis}
	\date{}
	\maketitle
	\begin{abstract}
		In this paper, we prove existence and uniqueness of viscosity solutions to the following system: For $ i\in\left\lbrace 1,2,\dots,m\right\rbrace  $ \begin{gather}
			\min\biggl\{ F\bigl( y,x,u_{i}(y,x),D u_{i}(y,x),D^2 u_{i}(y,x)\bigl),  u_{i}(y,x)-\max_{j\neq i}\bigl( u_{j}(y,x)-c_{ij}(y,x)\bigl)\biggl\}=0, \left(y,x \right)\in\Omega_{L}\nonumber\\
			 u_{i}(0,x)=g_{i}(x), x\in\bar{\Omega},\ u_i(y,x)=f_i(y,x),  (y,x)\in(0,L)\times\partial{\Omega}\nonumber
		\end{gather}
		where $ \Omega\subset\R^n $ is a bounded domain, $ \Omega_{L}:=(0,L)\times\Omega$ and $ F:\left[ 0,L\right] \times\R^n\times\R\times\R^n\times\mathcal{S}^n\to\R $ is a general second order partial differential operator which covers even the fully nonlinear case\footnote{We will call a second order partial differential operator  $F:\left[ 0,L\right] \times\R^n\times\R\times\R^n\times\mathcal{S}^n\to\R  $ fully nonlinear if and only if, it has the following form \\ 
			\begin{equation}
				F(y,x,u,D_x u,D_{xx}^2 u):=\sum_{\abs{\alpha}=2}\alpha_{\alpha}\left(y,x,u,D_x u,D_{xx}^2 u \right) D^{\alpha}u(y,x)+\alpha_{0}(y,x,u,D_x u)\nonumber
		\end{equation} with the restriction that at least one of the functional coefficients $ \alpha_{\alpha},\ \abs{\alpha}=2, $ contains a partial derivative term of second order. }. Moreover, $ F $ belongs to an appropriate subclass of degenerate elliptic operators. Regarding uniqueness, we establish a comparison principle for viscosity sub and supersolutions of the Dirichlet problem. This system appears among other in the theory of the so-called optimal switching problems on bounded domains.
	\end{abstract}

	\section{Introduction}
	In this article, we examine \emph{existence} and \emph{uniqueness} of viscosity solutions, of a system of possibly fully nonlinear obstacle problems. Specifically, the system consists of variational type inequalities accompanied with interconnected obstacles, which furthermore satisfies initial and boundary conditions. Let $ \Omega\subset\R^n $ be a bounded domain, i.e an open, connected and bounded set in $ \R^n $ and let $ i\in\left\lbrace 1,2,\dots,m \right\rbrace  $ for some positive integer $ m. $ The Boundary Value Problem (BVP) is stated as follows:\\
		\begin{gather}
			\min\biggl\{ F\bigl( y,x,u_{i}(y,x),D_{x}u_{i}(y,x),D_{xx}^2 u_{i}(y,x)\bigl),  u_{i}(y,x)-\max_{j\neq i}\bigl( u_{j}(y,x)-c_{ij}(y,x)\bigl)\biggl\}=0,\ \text{for all\ }\left(y,x \right)\in\Omega_{L}\nonumber\\
			 u_{i}(0,x)=g_{i}(x), x\in\bar{\Omega},\ u_i(y,x)=f_i(y,x),  (y,x)\in(0,L)\times\partial{\Omega}\nonumber
		\end{gather}
	where $ \Omega_{L}:=(0,L)\times\Omega. $
	The solution is considered as a vector mapping, $$ u:=\left( u_1,u_2,\dots,u_m\right):[0,L]\times\bar{\Omega}\rightarrow\R^m  $$ where the component functions $ u_i $ are interconnected through the obstacle $ \mathcal{M}_i $, in such a way  that the component function  $ u_i $ will be above the obstacle 
	$$ \mathcal{M}_i u(y,x):=\max_{j\neq i}\left( u_{j}(y,x)-c_{ij}(y,x)\right).$$
	
	This problem  is included in the general area of the so called optimal switching problems and 
	occurs,  for instance, when one models a facility which splits its production in m modes. In optimal switching problems there are basically two main approaches i.e. the probabilistic approach and the deterministic approach. For the first approach we refer the reader to \cite{AF12}, \cite{DH09}, \cite{DHP10}, \cite{HT07} and \cite{HZ10} for a well developed theory of stochastic differential equations governed by Brownian motion and the connection to multi-modes optimal switching problems. In the local setting, the second (deterministic) approach deals with the well developed theory of  variational inequalities within the general area of partial differential equations (see \cite{A16},  \cite{ADPS09}, \cite{AH09}, \cite{EF79}, \cite{HM}, \cite{LNO14}, and references therein). For applications on the global model, we refer the  reader to \cite{OL2021}.
	
	The main purpose of the present article is to study the elliptic fully nonlinear analogue of variational type inequalities accompanied with interconnected obstacles (which is originated in the fundamental work of Evans and Friedman \cite{EF79}). For this purpose we use the notion of viscosity solutions to fully nonlinear equations which provides a powerful way to prove existence and uniqueness in a very general setting. Our techniques could be considered within the classical approach (i.e comparison principles and Perron's method) which has been very successful the last two decades when seeking existence in problems with thick or thin obstacles (see for instance \cite{kiamlee1}, \cite{kiamlee2}, \cite{ms1}, \cite{ms2} and references therein) and are greatly inspired by the parabolic analogue of a similar problem which has been recently studied by N. Lundstr\"{o}m and M. Olofsson in \cite{LNOO4}. Although we follow the approach in \cite{LNOO4}, especially when dealing with the corresponding Perron type method and comparison principles, we need to pay extra attention in the technical  parts of the proofs.  This has to do with the fact that our problem seems to be more close to obstacle type problems for extension operators of degenerate form (see the pioneer work of Caffarelli-Silvestre in \cite{CS}). 
	
	Throughout the article, the symbolisms $ D_x u_{i}\equiv D u_{i} \in R^n  $, $  D_{xx}^2 u_{i}\equiv D^2 u_{i}\in\mathcal{S}^n $ refer respectively to the gradient and the hessian matrix of $ u_i $ with respect to the variable $ x $, where $ \mathcal{S}^n  $ is the set of real symmetric matrices $ n\times n $. Additionally, the operator $ F:\left[0,L \right] \times\R^n\times\R\times\R^n\times\mathcal{S}^n\to\R $ is considered as  possibly fully nonlinear for which, further assumptions will be set for the study of the problem. The modifier ``\textbf{possibly fully nonlinear}'' stands for ``\textbf{$ F $ may not be linear in any of its arguments}", but of course linear forms of $ F $ are not excluded i.e $ F $   is either fully nonlinear or a quasilinear operator. 
	
	Regarding the structure of the paper, in Section 2 we provide the necessary assumptions and definitions for our problem while in Section 3 we present our main results, existence and uniqueness of the solution. 
	
	\section{Assumptions and Definitions}
	\subsection{Assumptions}
	Let $ \Omega\subset\R^n $ be an open, connected and bounded set, with closure $ \bar{\Omega} $ and boundary $ \partial{\Omega}=\bar{\Omega}/\Omega $ and let $ y\in[0,L]. $
	\subsubsection{ Assumptions for the operator $ F $}
	 We suppose that the operator $  F:[0,L]\times\R^n\times\R\times\R^n\times\mathcal{S}^n\to\R, $ is a \emph{continuous} mapping which satisfies the following:
	\begin{itemize}
		\item [( F1)]  For each $ \left( y,x,p,X\right)\in[0,L]\times\R^n\times\R^n\times\mathcal{S}^n,  $ the mapping $ r\mapsto F\left( y,x,r,p,X\right)-\gamma r  $ is strictly increasing for some constant  $ \gamma>0 $. In specific, for each $ r, s\in \R $ with $ r<s $, holds:
		$$\gamma (s-r)\leq F\left( y,x,s,p,X\right)-F\left( y,x,r,p,X\right) $$
	\end{itemize}
	
	 There exists a \emph{continuous} function  $ \omega :[0,\infty) \to [0,\infty)$ with $ \omega(0+)=0, $ such that:
	\begin{itemize}
		\item [( F2)] $ \text{for all\ }\left(y,x,r\right)\in [0,L]\times\R^n\times\R, \text{for all\ } p,q\in\R^n,\text{for all\ } X,Y\in\mathcal{S}^n $
		$$ \left| F\left(y,x,r,p,X \right)-F\left(y,x,r,q,Y \right) \right|\leq\omega\big(\left| p-q\right|+\norm{X-Y})$$
		\item [( F3)]  $ \text{for all\ }\left( y,r,p\right)\in[0,L)\times\R\times\R^n, \text{for all\ } x,\tilde{x}\in\R^n,\text{for all\ } X,Y\in\mathcal{S}^n, \text{for all\ }\epsilon>0  $, holds
		$$ F\left(y,\tilde{x},r,p,Y \right)-F\left(y,x,r,p,X \right)\leq\omega\Big(\frac{1}{\epsilon}\left|x-\tilde{x} \right|^2+\left|x-\tilde{x} \right|\left( \left|p \right|+1 \right)\Big) $$
		whenever
		$$\begin{pmatrix}	X & 0 \\0 & -Y \end{pmatrix} \leq\frac{3}{\epsilon}\begin{pmatrix}	I & -I \\-I & I \end{pmatrix}.$$
	\end{itemize}

	 From the above conditions for the operator $ F $, we extract that $ F $ is \emph{ (degenerate) elliptic}. In specific,
	$$ \text{for all\ } \left(y,x,r,p \right)\in[0,L]\times\R^n\times\R\times\R^n,\text{for all\ } X,Y\in \mathcal{S}^n,  X\leq Y\Rightarrow F\left(y,x,r,p,X \right)\geq F\left(y,x,r,p,Y \right)  $$
	Consequently, $ F $ is a \emph{non-increasing} function with respect to the fifth variable.
	\subsubsection{Assumptions for the obstacle functions$ \ \left(c_{i,j} \right)  $}
	 For the obstacle functions, $ \left(c_{i,j} \right) $ we make the following assumptions:
	\begin{itemize}
		\item [(O1)] $ c_{i,j}\in C^{0,2}\left( [0,L] \times\bar{\Omega}\right),$ for all $ i,j\in\left\lbrace 1,2,\dots,m \right\rbrace $ i.e $ c_{i,j} $ is a continuous function on $ [0,L] \times\bar{\Omega} $ and for each $ y\in [0,L] $ the function $ c_{i,j}(y,\cdot):\Omega\to\R $ is twice continuously differentiable on $ \Omega. $
		\item [(O2)] $ c_{i,i}(y,x)=0,\text{for all\ } \left( y,x\right)\in[0,L] \times\bar{\Omega}, \text{for all\ } i\in\left\lbrace 1,2,\dots,m\right\rbrace$
		\item[(O3)] For each finite sequence of indices $ i_j\in\left\lbrace 1,2,\dots,m\right\rbrace ,\text{for all\ } j\in\left\lbrace 1,2,\dots,k\right\rbrace , \text{for all\ }\left(y,x \right)\in[0,L]\times\bar{\Omega}$ holds
		$$c_{i_1,i_2}\left( y,x\right) +c_{i_2,i_3}\left( y,x\right) +\cdots+c_{i_{k-1},i_k}\left( y,x\right) +c_{i_k,i_1}\left( y,x\right) >0 .$$
	\end{itemize}
	 For the existence of viscosity solutions of the problem $ (IBVP) $ we demand furthermore the condition that follows:
	\begin{itemize}
		\item[(O4)] $ \text{for all\ }\left( y,x\right)\in[0,L]\times\bar{\Omega}, \text{for all\ } i,j,k\in\left\lbrace 1,2,\dots,m\right\rbrace  $ it holds:
		$$c_{i,k}\left(y,x\right) \leq c_{i,j}\left(y,x\right) +c_{j,k}\left(y,x\right).  $$
		\item[(O5)] Finally, we demand for the boundary data $ f_i,\ i\in\{1,2,\dots,m\} $ to be continuous on $ (0,L)\times\partial\Omega $. Moreover, we demand for the data $ g_i,i\in\left\lbrace 1,2,\dots,m \right\rbrace  $ to be continuous, i.e $g_i\in C\left( \bar{\Omega}\right)   $ and to be compatible with the obstacle functions, in the following way:\\ $ \text{for all\ } x\in\bar{\Omega}, \text{for all\ } i,j\in\left\lbrace 1,2,\dots,m \right\rbrace  $
		$$g_i(x)\geq g_j(x)-c_{i,j}\left(0,x \right). $$
	\end{itemize}
	
	\subsection{Definitions}
	 First, the notions of  \emph{subjets}  and \emph{superjets} are defined, from which we define later the notion of \emph{viscosity solutions} for the problem $(IBVP)$.
	
	\begin{definition}[\emph{Subjets} and \emph{Superjets}]
		 Let $ h:[0,L]\times\bar{\Omega}\to\R $ be a mapping. For $\left( y_0, x_{0}\right)\in(0,L)\times\Omega $ we define the sets $ J^{2,+}h(y_0,x_{0}) $ and $ J^{2,-}h(y_0,x_{0}) $, which are called respectively  SuperJet  of  $ h $ on $\left( y_0,x_{0} \right) $ and SubJet of $ h $ on $\left( y_0,x_{0} \right) $.
		\begin{itemize}
			\item  $\text{for all\ }\left(\alpha,p,X \right)\in\R\times\R^{n}\times \mathcal{S}^n, \left(\alpha,p,X \right)\in J^{2,+}h(y_0,x_{0}) $ if and only if 
			$$ h(y,x)\leq h(y_0,x_{0})+\alpha\left(y-y_0 \right)+\left\langle p,x-x_{0} \right\rangle +\frac{1}{2} \left\langle X(x-x_{0}),x-x_{0} \right\rangle \\ + o\left(\left|y-y_0 \right|+  \left| x-x_{0} \right|^{2}  \right),$$
			while $ \left(0,L \right) \times\Omega\ni\left( y,x\right) \to\left(y_0,x_{0}\right)   $ 
			\item  $\text{for all\ }\left(\alpha,p,X \right)\in\R\times\R^{n}\times  \mathcal{S}^n, \left(\alpha,p,X \right)\in J^{2,-}h(y_0,x_{0}) $ if and only if
			$$  h(y,x)\geq h(y_0,x_{0})+\alpha\left(y-y_0 \right)+\left\langle p,x-x_{0} \right\rangle +\frac{1}{2} \left\langle X(x-x_{0}),x-x_{0} \right\rangle \\ + o\left(\left|y-y_0 \right| +  \left| x-x_{0} \right|^{2}  \right),$$
			while $ \left(0,L \right) \times\Omega\ni\left( y,x\right) \to\left(y_0,x_{0}\right)   $
		\end{itemize}
		 Correspondingly, we define the closures of the $ J^{2,+}h(y_0,x_{0}) $ and $ J^{2,-}h(y_0,x_{0}) $. In specific:
		\begin{itemize}
			\item $ \left(\alpha, p,X \right) \in \bar{J}^{2,+}h(y_0,x_{0}) $ if and only if, $ \text{there exists\ } \left( y_k,x_{k}\right)\in\left(0,L \right)\times \Omega,\text{there exists\ }\left(\alpha_k, p_{k},X_{k}\right)\in J^{2,+}h(y_k,x_{k}), $
			$$ \left(y_k, x_{k},h(y_k, x_{k}),\alpha_k,p_{k},X_{k}\right)\rightarrow\left( y_0,x_{0},h(y_0,x_{0}),\alpha,p,X\right)$$
			
			\item  $ \left(\alpha, p,X \right) \in \bar{J}^{2,-}h(y_0,x_{0}) $ if and only if, $ \text{there exists\ } \left( y_k,x_{k}\right)\in\left(0,L \right)\times \Omega,\text{there exists\ }\left(\alpha_k, p_{k},X_{k}\right)\in J^{2,-}h(y_k,x_{k}), $
			$$ \left(y_k, x_{k},h(y_k, x_{k}),\alpha_k,p_{k},X_{k}\right)\rightarrow\left( y_0,x_{0},h(y_0,x_{0}),\alpha,p,X\right)$$
		\end{itemize}
		
	\end{definition}
	 Next we provide the definition of \emph{Viscosity Solution} of the problem $ (BVP) $.
	
	\begin{definition}[\emph{Viscosity Solutions}]
		 Let $ u=\left(u_{1},u_{2},\dots,u_{m} \right):\left[ 0,L\right] \times\bar{\Omega}\rightarrow\R^{m}  $ be a mapping. Then, $ u:[0,L]\times\bar{\Omega}\rightarrow\R^{m} $ is called:
		\begin{itemize}
			\item[($\alpha$)] \emph{viscosity subsolution} of the problem $ (BVP) $ if and only if, 
			\begin{itemize}
				\item[(i)]  $ u $ is \emph{upper semicontinuous} on  $ \bar{\Omega}_L:=[0,L]\times\bar{\Omega}, $
				\item [(ii)]	$ \forall i\in\left\lbrace 1,2,\dots, m \right\rbrace,\forall \left(y,x \right)\in(0,L)\times\Omega,\forall\left(\alpha, p,X \right)\in \bar{J}^{2,+}u_i(y,x),$
				$$ \min\left\lbrace F\left(y,x,u_i(y,x),p,X\right),u_i(y,x)-\mathcal{M}_i u(y,x)\right\rbrace \leq 0 $$ 
				\item[(iii)] $ u_{i}(0,x)\leq g_{i}(x),\ x\in\bar{\Omega},\quad \land \quad u_i(y,x)\leq f_i(y,x),\ (y,x)\in (0,L)\times\partial{\Omega}  $
			\end{itemize}
			\item[($ \beta $)] \emph{viscosity supersolution} of the problem $ (BVP) $ if and only if,
			\begin{itemize}
				\item[(i)]  $ u $ is \emph{lower semicontinuous} on $ \bar{\Omega}_L:=[0,L]\times\bar{\Omega}, $ 
				\item[(ii)] $ \forall i\in\left\lbrace 1,2,\dots, m \right\rbrace,\ \forall\left(y,x \right)\in(0,L)\times\Omega,\forall\left(\alpha, p,X \right)\in \bar{J}^{2,-}u_i(y,x),$
				$$\min\left\lbrace F\left(y,x,u_i(y,x),p,X\right),u_i(y,x)-\mathcal{M}_i u(y,x)\right\rbrace \geq 0 $$ 
				\item[(iii)] $ u_{i}(0,x)\geq g_{i}(x),\ x\in\bar{\Omega},\quad \wedge\quad u_i(y,x)\geq f_i(y,x),\ (y,x)\in (0,L)\times\partial{\Omega} $
			\end{itemize} 
			\item[($\gamma$)] \emph{viscosity solution} of the problem $ (BVP) $ if and only if,
			$ u $ is bounded on $ [0,L]\times\bar{\Omega} $, continuous on $ [0,L)\times\bar{\Omega} $ and satisfies  $ (ii) $ and $ (iii) $ of definitions ($ \alpha $) and ($ \beta $)
		\end{itemize}
		
	\end{definition}


	\section{ Main Results}
	\subsection*{Uniqueness of Solution}
	In this article, when we refer to the notion of uniqueness of viscosity solution of $ (BVP), $ we mean that, for every two functions $  u, w:[0,L]\times\bar{\Omega}\to\R^m $ that satisfy the definition of viscosity solution, have to be identical on the set $ [0,L)\times\bar{\Omega} $ i.e
	\begin{gather}
		\text{for all\ } i\in\{1,2,\dots,m\},\ \text{for all\ }(y,x)\in[0,L)\times\bar{\Omega},\ u_i(y,x)=w_i(y,x). 
	\end{gather} 
 	For this task, it is sufficent to prove an appropriate Comparison Principle.
	\begin{theorem}[Comparison Principle] 
		\label{Comparison Principle}
		Let the axioms $ (F1)-(F3) $ and $ (O_1)-(O_3)$ hold. If $ u:[0,L]\times\bar{\Omega}\to\R^m $ and $ v:[0,L]\times\bar{\Omega}\to\R^m $ are  subsolution and  supersolution respectively of the problem $ (BVP) $, then $$ u(y,x)\leq v(y,x), \text{for all\ } (y,x)\in [0,L)\times\bar{\Omega}. $$
	\end{theorem}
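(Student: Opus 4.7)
The plan is to argue by contradiction. Suppose that
$$
M^\ast := \sup_{i}\sup_{(y,x)\in[0,L)\times\bar\Omega}\bigl(u_i(y,x)-v_i(y,x)\bigr) > 0.
$$
The pointwise boundary and initial inequalities in part (iii) of the definitions already force $u_i-v_i\le 0$ on $(\{0\}\times\bar\Omega)\cup((0,L)\times\partial\Omega)$, so the positive supremum has to be approached from the interior. Because $u$ is upper semicontinuous and $v$ is lower semicontinuous, I first subtract a penalty $\eta/(L-y)$ from every component $u_i$ for $\eta>0$ small. Hypothesis (F1) guarantees that the perturbed vector $u-\eta/(L-y)$ is still a subsolution (in fact a strict one, with slack $\gamma\eta/(L-y)$), and the obstacle $\mathcal{M}_i$ is unaffected since the same quantity is subtracted from every $u_j$. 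The penalty sends $M^\ast_\eta$ to $-\infty$ as $y\uparrow L$, so the perturbed supremum is attained at some point $(y_0,x_0)\in(0,L)\times\Omega$ and some index $i_0$.

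Second, I deal with the interconnection of the obstacles. Let $I^\ast\subset\{1,\dots,m\}$ collect the indices at which this perturbed maximum $M^\ast_\eta$ is attained at $(y_0,x_0)$. For each $i\in I^\ast$ either the PDE branch is the active one for the subsolution at $(y_0,x_0)$, i.e.\ $F(y_0,x_0,u_i(y_0,x_0),p,X)\le 0$ for the relevant $(\alpha,p,X)\in\bar J^{2,+}u_i$, or the obstacle branch is active, in which case $u_i(y_0,x_0)=u_j(y_0,x_0)-c_{i,j}(y_0,x_0)$ for some $j\neq i$. In the latter case the supersolution inequality $v_i\ge v_j-c_{i,j}$ yields
$$
u_j(y_0,x_0)-v_j(y_0,x_0)\ge u_i(y_0,x_0)-v_i(y_0,x_0)=M^\ast_\eta,
$$
forcing $j\in I^\ast$. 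Iterating the map $i\mapsto j$ on the finite set $I^\ast$ either eventually hits an index at which the PDE branch is active, or produces a cycle $i_1\to i_2\to\cdots\to i_k\to i_1$ along which the defining equalities can be summed to give
$$
c_{i_1,i_2}(y_0,x_0)+c_{i_2,i_3}(y_0,x_0)+\cdots+c_{i_k,i_1}(y_0,x_0)=0,
$$
directly contradicting hypothesis (O3). Hence there is a distinguished $i_0\in I^\ast$ whose subsolution inequality comes from the PDE branch.

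Third, for that $i_0$ I perform the standard doubling-of-variables argument, maximizing
$$
\Phi_\epsilon(y,\tilde y,x,\tilde x):=u_{i_0}(y,x)-v_{i_0}(\tilde y,\tilde x)-\tfrac{1}{\epsilon}|x-\tilde x|^2-\tfrac{1}{\epsilon}|y-\tilde y|^2-\tfrac{\eta}{L-y}
$$
over $[0,L)\times[0,L)\times\bar\Omega\times\bar\Omega$. The usual soft arguments locate the maximizer at an interior point and force $|x_\epsilon-\tilde x_\epsilon|^2/\epsilon\to 0$, $|y_\epsilon-\tilde y_\epsilon|^2/\epsilon\to 0$. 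Ishii's lemma then delivers triples $(\alpha_\epsilon,p_\epsilon,X_\epsilon)\in\bar J^{2,+}u_{i_0}(y_\epsilon,x_\epsilon)$ and $(\tilde\alpha_\epsilon,p_\epsilon,Y_\epsilon)\in\bar J^{2,-}v_{i_0}(\tilde y_\epsilon,\tilde x_\epsilon)$ with $p_\epsilon=2(x_\epsilon-\tilde x_\epsilon)/\epsilon$ and the matrix inequality required by (F3). Subtracting the sub- and supersolution PDE inequalities and invoking (F2), (F3) makes all modulus-of-continuity terms vanish as $\epsilon\downarrow 0$, while (F1) produces a lower bound $\gamma M^\ast_\eta$ for the remaining piece. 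Sending $\eta\downarrow 0$ yields $\gamma M^\ast\le 0$, contradicting $M^\ast>0$.

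The hard part will be the interaction between two features that usually do not coexist in the literature. On the one hand, the operator is degenerate in $y$ (no $y$-derivative appears in $F$), so standard parabolic tricks are unavailable and the mismatch in the two time-slots must be absorbed purely through the continuity of $F$ in its first argument together with (F3); Ishii's lemma gives no useful estimate on $\alpha_\epsilon-\tilde\alpha_\epsilon$. On the other hand, the interconnection of the obstacles forces one to single out, before doubling, an index for which the PDE branch is guaranteed to be the active one, and it is exactly the cyclic positivity condition (O3) that permits this selection. Managing these two difficulties simultaneously, without destroying either the strict monotonicity supplied by (F1) or the obstacle structure under the $\eta/(L-y)$ penalization, is where most of the technical care is needed.
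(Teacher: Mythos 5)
Your overall strategy is the same as the paper's (argue by contradiction, penalize by $\eta/(L-y)$ to push the maximum away from $y=L$, use the cycle argument with (O3) to single out an index that stays away from its obstacle, then double variables and apply Ishii's lemma together with (F1)--(F3)), but two steps as you describe them would not go through. First, you double the $y$-variable as well, penalizing $|y-\tilde y|^2/\epsilon$ and obtaining jets of $u_{i_0}$ at $(y_\epsilon,x_\epsilon)$ and of $v_{i_0}$ at $(\tilde y_\epsilon,\tilde x_\epsilon)$ with $y_\epsilon\neq\tilde y_\epsilon$. The structural hypotheses (F2) and (F3) compare values of $F$ only at the \emph{same} first argument $y$, and $F$ is merely continuous in $y$; since the matrices $X_\epsilon,Y_\epsilon$ produced by Ishii's lemma blow up like $1/\epsilon$, plain continuity cannot absorb the mismatch $F(\tilde y_\epsilon,\cdot)-F(y_\epsilon,\cdot)$, and (F3) gives nothing across different $y$'s. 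You flag this as ``the hard part'' but propose no mechanism; the paper's way out is precisely \emph{not} to double $y$: it uses the test function $\tfrac{1}{2\epsilon}|x-\tilde x|^2+|x-x_{i_0}|^4+|\tilde x-x_{i_0}|^4+|y-y_{i_0}|^2$ with a single $y$ and the parabolic-type maximum principle for semicontinuous functions (Theorem 8.3 of \cite{CIL}), so that both $F$-inequalities are evaluated at the same $y_{\mu_k}$.

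Second, your selection step is stated at the limit point only: you conclude that for some $i_0$ ``the PDE branch is active at $(y_0,x_0)$'', i.e.\ $F\le 0$ for jets \emph{at that point}. What the doubling argument actually needs is $F\le 0$ at the doubled maximizers $(y_\epsilon,x_\epsilon)$, where the viscosity condition only gives $\min\{F,\,u_{i_0}-\mathcal{M}_{i_0}u\}\le 0$. To discard the obstacle term there you need the \emph{strict} separation $u_{j_0}(y_0,x_0)>\mathcal{M}_{j_0}u(y_0,x_0)$ at the limit point (this, not ``$F\le0$ for some jet'', is what the cycle/(O3) contradiction yields, and with inequalities $u_i\le u_j-c_{i,j}$ rather than the equalities you wrote), and then a propagation argument: using $\limsup u_{j_0}(y_{\epsilon_k},x_{\epsilon_k})=u_{j_0}(y_0,x_0)$ for the maximizing index (itself requiring the argument of the paper's Claim 3.5), upper semicontinuity of the $u_j$'s and continuity of the $c_{i,j}$'s, one gets $u_{j_0}>\mathcal{M}_{j_0}u$ along a subsequence of doubled points, and only then $F\le 0$ there (the paper's Claims 3.6--3.7). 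Without this semicontinuity step your phrase ``subtracting the sub- and supersolution PDE inequalities'' is unjustified, since at $(y_\epsilon,x_\epsilon)$ the minimum could be realized by the obstacle term. The remaining ingredients (the $\eta/(L-y)$ penalization kept throughout rather than only for the case $y_{i_0}=L$, and the final $\gamma$-term bound from (F1)) are fine and match the paper.
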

	
	\begin{proof}[Proof]
		 Throughout of the proof, we consider that functions $ u$ and $ v $ are  subsolution and  supersolution respectively of the problem $ (BVP). $ Although the proof follows the standard proof of \cite{CIL}, the appearance of interconnected obstacles does not allow us to apply this theory right away. Therefore we have to carefully implement the proof  through the following steps:\\
		\textbf{Step 1}\  From the assumption we have that $ v_i\in LSC\left( [0,L]\times\bar{\Omega}\right).  $ So equivalently it must be $ -v_i\in USC\left( [0,L]\times\bar{\Omega}\right).  $ Then we have $ u_i-v_i\in USC\left( [0,L]\times\bar{\Omega}\right), \text{for all\ } i\in\left\lbrace 1,2,\dots,m \right\rbrace  $. Because $ u_i-v_i $ is upper semicontinuous on the compact set $ [0,L]\times\bar{\Omega},  $ from Maximum Principle for upper semicontinuous functions, it will exists $ \left(y_i,x_i \right)\in [0,L]\times\bar{\Omega}  $ such that $ \sup_{[0,L]\times\bar{\Omega}}\left(u_i-v_i \right)\left( y,x\right)=u_i(y_i,x_i) -v_i(y_i,x_i)  .$ Let $ i_0\in\left\lbrace 1,2,\dots,m\right\rbrace  $ be such that:
		$$\max_{k\in\left\lbrace 1,2,\dots,m\right\rbrace}\sup_{[0,L]\times\bar{\Omega}}\left( u_k-v_k\right)(y,x)=\sup_{[0,L]\times\bar{\Omega}}\left( u_{i_0}-v_{i_0}\right)(y,x)=u_{i_0}(y_{i_0},x_{i_0})-v_{i_0}(y_{i_0},x_{i_0})\equiv\delta. $$
		
		We define 
		$$I:=\big\{ i\in\{ 1,2,\dots,m\} \ : \ u_i\left( y_{i_0},x_{i_0}\right) -v_i\left( y_{i_0},x_{i_0}\right) =\delta \big\}.$$
		Clearly we see that $ I\neq\emptyset $ because $ i_0\in I $ and in addition, from the definition of $ I $, we get that \\ $ \text{for all\ } i\in I, \sup_{[0,L]\times\bar{\Omega}}\left(u_i-v_i \right)\left( y,x\right) =\delta.  $
		
		Let us assume the opposite of what we seek to prove. As a result of this, we receive that 
		\begin{gather}
			\text{there exists\ } \left( \tilde{y},\tilde{x}\right) \in [0,L)\times\bar{\Omega}: \rceil \left(u\left( \tilde{y},\tilde{x}\right) \leq v\left(\tilde{y},\tilde{x} \right)  \right)\nonumber\\
			\Longleftrightarrow \text{there exists\ } \left( \tilde{y},\tilde{x}\right) \in [0,L)\times\bar{\Omega},\ \text{there exists\ } \tilde{i}\in\left\lbrace 1,2,\dots,m\right\rbrace: u_{\tilde{i}}\left(\tilde{y},\tilde{x} \right)-v_{\tilde{i}}\left( \tilde{y},\tilde{x}\right) >0.\nonumber
		\end{gather}
		So we get
		\begin{gather}
			\delta\geq\sup_{[0,L]\times\bar{\Omega}}\left( u_{\tilde{i}}-v_{\tilde{i}}\right) \left( y,x\right) \geq u_{\tilde{i}}\left(\tilde{y},\tilde{x} \right)-v_{\tilde{i}}\left( \tilde{y},\tilde{x}\right) >0\nonumber
		\end{gather}
		in consequence, $ \delta>0. $   
		Due to the fact that $ u,v  $ are subsolution and supersolution respectively, we receive from their definition, that
		\begin{gather}
			u_i(0,x)\leq g_i(x)\leq v_i(0,x),\ \text{for all\ } x\in\bar{\Omega}\\
			u_i(y,x)\leq f_i(y,x)\leq v_i(y,x),\ \text{for all\ } (y,x)\in(0,L)\times\partial{\Omega}.
		\end{gather}
		From the above inequalities, we conclude that $ y_{i_0}>0  $  and $ x_{i_0}\in\Omega. $ In the continuity of our analysis, it is sufficent to further consider (the explanation is given in the end of the proof) that, $ y_{i_0}<L. $
		We choose a random  $ i\in I $ in the analysis that follows. For each $ \epsilon>0, $ we define the following mappings:
		$$\phi_{\epsilon},\Phi_{\epsilon}:[0,L]\times\bar{\Omega}\times\bar{\Omega}\to\R$$
		defined as
		\begin{gather}
			\phi_{\epsilon}(y,x,\tilde{x}):=\frac{1}{2\epsilon}\abs{x-\tilde{x}}^2+\abs{x-x_{i_0}}^4+\abs{\tilde{x}-x_{i_0}}^4+\abs{y-y_{i_0}}^2\nonumber\\
			\Phi_{\epsilon}(y,x,\tilde{x}):=u_i(y,x)-v_i(y,\tilde{x})-\phi_{\epsilon}(y,x,\tilde{x}).\nonumber
		\end{gather}
		 The $ \Phi_{\epsilon} $ as an upper semicontinuous on the compact set $ [0,L]\times\bar{\Omega}\times\bar{\Omega}, $ attains a maximum value. Let $ \left(y_{\epsilon},x_{\epsilon},\tilde{x}_{\epsilon} \right)\in [0,L]\times\bar{\Omega}\times\bar{\Omega}  $ be such that
		\begin{gather}
			\Phi_{\epsilon}\left(y_{\epsilon},x_{\epsilon},\tilde{x}_{\epsilon}  \right) =\max_{[0,L]\times\bar{\Omega}\times\bar{\Omega}} \Phi_{\epsilon}\left(y,x,\tilde{x}\right).\label{eq_3}
		\end{gather}
		 Next, an important estimate of the distance between the terms of $ x_{\epsilon} $ and $ \tilde{x}_{\epsilon} $ is proved,. We will subsequently see that it is used to prove the limiting behaviour of an appropriate sequence of points which is extracted from the following family of points $ \left(y_{\epsilon},x_{\epsilon},\tilde{x}_{\epsilon} \right)_{\epsilon>0}.  $
		\begin{claim}\label{claim_1}
			$$\text{For all\ } \epsilon>0,\quad \frac{1}{2\epsilon}\abs{x_{\epsilon}-\tilde{x}_{\epsilon}}^2\leq u_i(y_{\epsilon},x_{\epsilon})-v_i(y_{\epsilon},\tilde{x}_{\epsilon})-\delta $$
		\end{claim}
		\begin{proof}[ Proof]
			 We observe that 
			\begin{gather}
				\Phi_{\epsilon}\left(y_{i_0},x_{i_0},x_{i_0} \right)=u_i\left(y_{i_0},x_{i_0} \right) - v_i\left(y_{i_0},x_{i_0} \right)=\delta\quad \left( \text{because}\ i\in I \right) \nonumber
			\end{gather}
				but at the same time, from $ \left( \ref{eq_3}\right)  $ we conclude that $ \Phi_{\epsilon}\left(y_{\epsilon},x_{\epsilon},\tilde{x}_{\epsilon}  \right)\ge\Phi_{\epsilon}\left(y_{i_0},x_{i_0},x_{i_0} \right)=\delta $.  Therefore, we receive that
			\begin{gather}
				u_i(y_{\epsilon},x_{\epsilon})-v_i(y_{\epsilon},\tilde{x}_{\epsilon})-\phi_{\epsilon}(y_{\epsilon},x_{\epsilon},\tilde{x}_{\epsilon})\ge\delta\nonumber\\
				\Longleftrightarrow u_i(y_{\epsilon},x_{\epsilon})-v_i(y_{\epsilon},\tilde{x}_{\epsilon})-\Big{(}\frac{1}{2\epsilon}\abs{x_{\epsilon}-\tilde{x}_{\epsilon}}^2+\abs{x_{\epsilon}-x_{i_0}}^4+\abs{\tilde{x}_{\epsilon}-x_{i_0}}^4+\abs{y_{\epsilon}-y_{i_0}}^2\Big{)} \ge\delta\nonumber\\
				\Longrightarrow u_i(y_{\epsilon},x_{\epsilon})-v_i(y_{\epsilon},\tilde{x}_{\epsilon})-\frac{1}{2\epsilon}\abs{x_{\epsilon}-\tilde{x}_{\epsilon}}^2\ge\delta\nonumber\\
				\Longleftrightarrow \frac{1}{2\epsilon}\abs{x_{\epsilon}-\tilde{x}_{\epsilon}}^2\leq u_i(y_{\epsilon},x_{\epsilon})-v_i(y_{\epsilon},\tilde{x}_{\epsilon})-\delta\label{eq_4}.
			\end{gather}
		\end{proof}
		 We consider a random sequence $ \left( \epsilon_{\tilde{k}}\right)_{\tilde{k}\in\N}  $ in $ (0,\infty) $ such that $ \lim_{\tilde{k}\to\infty}\epsilon_{\tilde{k}}=0. $ Hence, a corresponding sequence of points $ \left\lbrace \left( y_{\epsilon_{\tilde{k}}},x_{\epsilon_{\tilde{k}}},\tilde{x}_{\epsilon_{\tilde{k}}}\right)_{\tilde{k}\in\N}\right\rbrace \subset[0,L]\times\bar{\Omega}\times\bar{\Omega}$ will exists. Since the  $ [0,L]\times\bar{\Omega}\times\bar{\Omega} $ is compact, it will  simultaneously also be sequentially compact. Therefore, the sequence of points $  \left( y_{\epsilon_{\tilde{k}}},x_{\epsilon_{\tilde{k}}},\tilde{x}_{\epsilon_{\tilde{k}}}\right)_{\tilde{k}\in\N}  $ has a subsequence, let's say $ \left( y_{\epsilon_k},x_{\epsilon_k},\tilde{x}_{\epsilon_k}\right)_{k\in\N}, $ such that, $$ \left( y_{\epsilon_k},x_{\epsilon_k},\tilde{x}_{\epsilon_k}\right)\xrightarrow{k\to\infty}\left(\hat{y},\hat{x},\hat{\tilde{x}} \right) \in[0,L]\times\bar{\Omega}\times\bar{\Omega} $$ and in addition it must be $ \lim_{k\to\infty}\epsilon_k=0 \quad \left(\text{since}\left( \epsilon_k\right)_{k\in\N}\ \text{is a subsequence of} \left(\epsilon_{\tilde{k}} \right)_{\tilde{k}\in\N}  \right).$\\
		
		\begin{claim}\label{claim_2}
			$$\lim_{k\to\infty}\abs{x_{\epsilon_k}-\tilde{x}_{\epsilon_k}}=0 \text{ and } \hat{x}=\hat{\tilde{x}}$$
		\end{claim}
		\begin{proof}[ Proof]
			We set $ M:=\max_{[0,L]\times\bar{\Omega}}u(y,x)-\min_{[0,L]\times\bar{\Omega}}v(y,\tilde{x}) $. Then $ \text{for all\ } k\in\N $ we have \\ $ u\left( y_{\epsilon_k},x_{\epsilon_k}\right)- v\left( y_{\epsilon_k},\tilde{x}_{\epsilon_k}\right) \leq M. $ From $ (\ref{eq_4}) $, the following holds
			\begin{align}
				0\leq\abs{x_{\epsilon_k}-\tilde{x}_{\epsilon_k}}^2&\leq 2\ \epsilon_k \left( u_i\left(y_{\epsilon_k},x_{\epsilon_k} \right)-v_i\left(y_{\epsilon_k},\tilde{x}_{\epsilon_k} \right)-\delta  \right) \nonumber\\
				&\leq 2\ \epsilon_k \left( M-\delta\right) <^{\left(\delta>0 \right) } 2\ \epsilon_k M\xrightarrow{k\to\infty} 0\label{eq_5}.
			\end{align}
			From $ (\ref{eq_5}) $ we get $ \lim_{k\to\infty}\abs{x_{\epsilon_k}-\tilde{x}_{\epsilon_k}}=0.$ But at the same time, $ x_{\epsilon_k}\xrightarrow{k\to\infty}\hat{x} $ and $ \tilde{x}_{\epsilon_k}\xrightarrow{k\to\infty}\hat{y} $. Therefore, $ \abs{x_{\epsilon_k}-\tilde{x}_{\epsilon_k}}\xrightarrow{k\to\infty} \abs{\hat{x}-\hat{\tilde{x}}}.$
			In consequence, from the uniqueness of limit, we have $ \hat{x}=\hat{\tilde{x}}. $
		\end{proof}
		
		\begin{claim}\label{claim_3}
			$$ \lim_{k\to\infty}\frac{\abs{x_{\epsilon_k}-\tilde{x}_{\epsilon_k}}^2}{ 2\ \epsilon_k} =0$$
		\end{claim}
		\begin{proof}[ Proof]
				Because $ u\in USC\left([0,L]\times\bar{\Omega} \right),  $ from the sequential criterion for upper semicontinuous functions, the following holds, $ \limsup_{k\to\infty} u\left( y_{\epsilon_k},x_{\epsilon_k}\right)\leq u\left( \hat{y},\hat{x}\right)   $. Correspondingly and for\\ $ v\in LSC\left([0,L]\times\bar{\Omega} \right) $, from the sequential criterion for lower semicontinuous functions, the following holds $ \liminf_{k\to\infty} v\left( y_{\epsilon_k},\tilde{x}_{\epsilon_k}\right)\ge v\left( \hat{y},\hat{\tilde{x}}\right). $ In addition
			\begin{align}
				0\leq\liminf_{k\to\infty}\frac{\abs{x_{\epsilon_k}-\tilde{x}_{\epsilon_k}}^2}{ 2\ \epsilon_k}&\leq\limsup_{k\to\infty}\frac{\abs{x_{\epsilon_k}-\tilde{x}_{\epsilon_k}}^2}{ 2\ \epsilon_k}\nonumber\\
				&\leq^{(\ref{eq_4})} \limsup_{k\to\infty}(u_i( y_{\epsilon_k},x_{\epsilon_k})-v_i( y_{\epsilon_k},\tilde{x}_{\epsilon_k})) -\delta\nonumber\\
				&\leq\limsup_{k\to\infty} u_i( y_{\epsilon_k},x_{\epsilon_k})+\limsup_{k\to\infty}(-v_i( y_{\epsilon_k},\tilde{x}_{\epsilon_k})) -\delta\nonumber\\
				&=\limsup_{k\to\infty} u_i(y_{\epsilon_k},x_{\epsilon_k})-\liminf_{k\to\infty}(v_i( y_{\epsilon_k},\tilde{x}_{\epsilon_k})-\delta\nonumber\\
				&\leq u_i( \hat{y},\hat{x})-v_i( \hat{y},\hat{\tilde{x}})-\delta\nonumber\\ &=^{(\hat{x}=\hat{\tilde{x}} )}u_i(\hat{y},\hat{x})-v_i(\hat{y},\hat{x})-\delta\leq 0 \nonumber 
			\end{align}
			where the last inequality holds from the fact that, $ i\in I \text{\ and thus,}\  \delta=\sup_{[0,L]\times\bar{\Omega}}(u_i-v_i ) ( y,x).  $ Therefore, we conclude that 
			$$\liminf_{k\to\infty}\frac{\abs{x_{\epsilon_k}-\tilde{x}_{\epsilon_k}}^2}{ 2\ \epsilon_k}=0=\limsup_{k\to\infty}\frac{\abs{x_{\epsilon_k}-\tilde{x}_{\epsilon_k}}^2}{ 2\ \epsilon_k}$$
			thus, $ \lim_{k\to\infty}\frac{\abs{x_{\epsilon_k}-\tilde{x}_{\epsilon_k}}^2}{ 2\ \epsilon_k}=0 $
		\end{proof}
		
		\begin{claim}\label{claim_4}
			$$ \lim_{k\to\infty}y_{\epsilon_k}=y_{i_0}, \lim_{k\to\infty}x_{\epsilon_k}=x_{i_0}, \lim_{k\to\infty}\tilde{x}_{\epsilon_k}=x_{i_0} $$
		\end{claim}
		\begin{proof}[ Proof]
			We had seen in Claim \ref{claim_1} that $ \Phi_{\epsilon}( y_{\epsilon},x_{\epsilon},\tilde{x}_{\epsilon})\ge\delta  $. Therefore, it applies that: \\
			$ \Phi_{\epsilon_k}( y_{\epsilon_k},x_{\epsilon_k},\tilde{x}_{\epsilon_k})\ge\delta  $. Thus, we obtain that:
			$$u_i(y_{\epsilon_k},x_{\epsilon_k})-v_i(y_{\epsilon_k},\tilde{x}_{\epsilon_k})-\Big{( }\frac{1}{2\epsilon_k}\abs{x_{\epsilon_k}-\tilde{x}_{\epsilon_k}}^2+\abs{x_{\epsilon_k}-x_{i_0}}^4+\abs{\tilde{x}_{\epsilon_k}-x_{i_0}}^4+\abs{y_{\epsilon_k}-y_{i_0}}^2\Big{)} \ge\delta.$$
			From the previous inequality, we receive that:
			\begin{gather}
				u_i(y_{\epsilon_k},x_{\epsilon_k})-v_i(y_{\epsilon_k},\tilde{x}_{\epsilon_k})-\delta\ge \abs{y_{\epsilon_k}-y_{i_0}}^2 \nonumber\\
				\wedge\	u_i(y_{\epsilon_k},x_{\epsilon_k})-v_i(y_{\epsilon_k},\tilde{x}_{\epsilon_k})-\delta\ge \abs{x_{\epsilon_k}-x_{i_0}}^4 \nonumber\\
				\wedge\ u_i(y_{\epsilon_k},x_{\epsilon_k})-v_i(y_{\epsilon_k},\tilde{x}_{\epsilon_k})-\delta\ge \abs{\tilde{x}_{\epsilon_k}-x_{i_0}}^4. \label{eq_6}
			\end{gather}
			We will prove the first of the three limits $ \lim_{k\to\infty}y_{\epsilon_k}=y_{i_0} $. The rest can be proved analogously. We observe that, similarly with the proof of the Claim \ref{claim_3}, the following holds: 
			\begin{gather}
				\limsup_{k\to\infty}( u_i(y_{\epsilon_k},x_{\epsilon_k})-v_i(y_{\epsilon_k},\tilde{x}_{\epsilon_k})) -\delta\leq 0 \label{eq_7}.
			\end{gather}
			Furthermore, from the first inequality of $ (\ref{eq_6}) $ we have that:
			\begin{gather}
				\limsup_{k\to\infty}\left( u_i(y_{\epsilon_k},x_{\epsilon_k})-v_i(y_{\epsilon_k},\tilde{x}_{\epsilon_k})\right) -\delta\ge\limsup_{k\to\infty}\abs{y_{\epsilon_k}-y_{i_0}}^4\ge 0 \label{eq_8}.
			\end{gather}
			From $(\ref{eq_7})  $ and  $ (\ref{eq_8}) $ we receive that $ \limsup_{k\to\infty}\abs{y_{\epsilon_k}-y_{i_0}}^4=0 $. Additionally, because\\ $\limsup_{k\to\infty}\abs{y_{\epsilon_k}-y_{i_0}}^4\ge \liminf_{k\to\infty}\abs{y_{\epsilon_k}-y_{i_0}}^4 \ge 0  $, it follows that:
			$$\limsup_{k\to\infty}\abs{y_{\epsilon_k}-y_{i_0}}^4=0=\liminf_{k\to\infty}\abs{y_{\epsilon_k}-y_{i_0}}^4.$$
			Thus, $ \lim_{k\to\infty}\abs{y_{\epsilon_k}-y_{i_0}}^4=0 $ equivalently $  \lim_{k\to\infty}y_{\epsilon_k}=y_{i_0} $.
		\end{proof}
		
		From the above Claim, combined with the uniqueness of the limit, we have that $ y_{i_0}=\hat{y} $ and $ x_{i_0}=\hat{x} $. Furthermore, due to the fact that $ x_{i_0}\in\Omega $ and $ y_{i_0}\in(0,L) $, it follows that $ x_{i_0},y_{i_0}   $  are internal points of $ \Omega $  and $(0,L) $ respectively. Then, from the Claim \ref{claim_4}, an index  $ k^*\in\N $ will exist, such that:
		\begin{gather} \text{for all\ } k\ge k^*,\ x_{\epsilon_k}\in\Omega\ \wedge\ \tilde{x}_{\epsilon_k}\in\Omega\ \wedge\ y_{\epsilon_k}\in (0,L). \label{eq_8(1)}
		\end{gather}
		
		\begin{claim}\label{claim_5}
			$$  \limsup_{k\to\infty}u_i\left(y_{\epsilon_k},x_{\epsilon_k} \right)=u_i\left(y_{i_0},x_{i_0} \right) \text{ and } \liminf_{k\to\infty}v_i\left(y_{\epsilon_k},\tilde{x}_{\epsilon_k} \right)=v_i\left(y_{i_0},x_{i_0} \right) $$
		\end{claim}
		\begin{proof}[Proof]
			 From the Claim \ref{claim_4} it holds that$ \left(y_{\epsilon_k},x_{\epsilon_k} \right)\xrightarrow{k\to\infty}\left( y_{i_0},x_{i_0}\right)  $ and $ \left(y_{\epsilon_k},\tilde{x}_{\epsilon_k} \right)\xrightarrow{k\to\infty}\left( y_{i_0},x_{i_0}\right). $ Because  $ u_i $ and $ v_i $ are upper and lower semicontinuous respectively, from the sequential criterion, we obtain that:
			$ \limsup_{k\to\infty}u_i\left( y_{\epsilon_k},x_{\epsilon_k}\right)\leq u_i\left(y_{i_0},x_{i_0} \right)  $ and $ \liminf_{k\to\infty} v_i\left( y_{\epsilon_k},\tilde{x}_{\epsilon_k}\right)\geq v_i\left(y_{i_0},x_{i_0} \right). $ At first, we will prove that $\limsup_{k\to\infty}u_i\left(y_{\epsilon_k},x_{\epsilon_k} \right)=u_i\left(y_{i_0},x_{i_0} \right)$. We suppose by contradiction, that the output is not true. Then, it is necessary from the above that $\limsup_{k\to\infty}u_i\left(y_{\epsilon_k},x_{\epsilon_k} \right)<u_i\left(y_{i_0},x_{i_0} \right)$. As previously, we have $ \text{for all\ } k\in\N $
			\begin{gather}
				u_i\left( y_{i_0},x_{i_0}\right) -v_i\left(y_{i_0},x_{i_0}\right)=\Phi_{\epsilon_k}\left(y_{i_0},x_{i_0},x_{i_0} \right)\leq \Phi_{\epsilon_k}\left(y_{\epsilon_k},x_{\epsilon_k},\tilde{x}_{\epsilon_k} \right) \leq u_i\left( y_{\epsilon_k},x_{\epsilon_k}\right) -v_i\left(y_{\epsilon_k},\tilde{x}_{\epsilon_k} \right)\label{eq_9}.
			\end{gather}
			Then:
			\begin{align}
				u_i\left( y_{i_0},x_{i_0}\right) -v_i\left(y_{i_0},x_{i_0}\right)&=\limsup_{k\to\infty}\left(	u_i\left( y_{i_0},x_{i_0}\right)-v_i\left(y_{i_0},x_{i_0}\right) \right)\nonumber\\
				&\leq^{(\ref{eq_9})}\limsup_{k\to\infty}\left(  u_i\left( y_{\epsilon_k},x_{\epsilon_k}\right) -v_i\left(y_{\epsilon_k},\tilde{x}_{\epsilon_k} \right)\right) \nonumber\\
				&\leq \limsup_{k\to\infty}\left(  u_i\left( y_{\epsilon_k},x_{\epsilon_k}\right)\right)  -\liminf_{k\to\infty}\left( v_i\left(y_{\epsilon_k},\tilde{x}_{\epsilon_k} \right)\right)\nonumber\\
				&<u_i\left(y_{i_0},x_{i_0} \right)-\liminf_{k\to\infty}\left( v_i\left(y_{\epsilon_k},\tilde{x}_{\epsilon_k} \right)\right)\nonumber.
			\end{align}
			Thus, $ v_i\left(y_{i_0},x_{i_0}\right)>\liminf_{k\to\infty}\left( v_i\left(y_{\epsilon_k},\tilde{x}_{\epsilon_k} \right)\right) $, which is a contradiction, because $ v_i $ is lower semicontinuous. Therefore, we have $\limsup_{k\to\infty}u_i\left(y_{\epsilon_k},\tilde{x}_{\epsilon_k} \right)=u_i\left(y_{i_0},x_{i_0} \right)$. 
			Correspondigly, if we make the assumption that the following is not true, $ \liminf_{k\to\infty}v_i\left(y_{\epsilon_k},\tilde{x}_{\epsilon_k} \right)=v_i\left(y_{i_0},x_{i_0} \right), $ then from the above we receive that $\liminf_{k\to\infty}v_i\left(y_{\epsilon_k},\tilde{x}_{\epsilon_k} \right)>v_i\left(y_{i_0},x_{i_0} \right).  $ Like before, we get that
			\begin{align} 
					u_i\left( y_{i_0},x_{i_0}\right) -v_i\left(y_{i_0},x_{i_0}\right)&\leq\limsup_{k\to\infty}\left(  u_i\left( y_{\epsilon_k},x_{\epsilon_k}\right)\right)  -\liminf_{k\to\infty}\left( v_i\left(y_{\epsilon_k},\tilde{x}_{\epsilon_k} \right)\right)\nonumber\\
					&<\limsup_{k\to\infty}\left(u_i\left( y_{\epsilon_k},x_{\epsilon_k}\right)\right)-v_i\left(y_{i_0},x_{i_0}\right).
			\end{align}
			Hence, $ u_i\left( y_{i_0},x_{i_0}\right)<\limsup_{k\to\infty}\left(u_i\left( y_{\epsilon_k},x_{\epsilon_k}\right)\right) $ which is a contradiction, because  $ u_i $ is upper semicontinouous. In consequence, $ \liminf_{k\to\infty}v_i\left(y_{\epsilon_k},x_{\epsilon_k} \right)=v_i\left(y_{i_0},x_{i_0} \right). $
		\end{proof}
		
		\textbf{ Step 2}: $ \left(\text{Obstacle Avoidance} \right) $ \\
		 First, we will prove that exists at least one component $ u_j $ of the subsolution $ u $ which lies strictly above its obstacle $ \mathcal{ M}_{j}u $ at point $ \left(y_{i_0},x_{i_0} \right). $ 
		In specific, we have the following claim.
		\begin{claim}\label{claim_6}
			\begin{gather}
				\text{There exists} \ j_0\in I,\ u_{j_0}\left(y_{i_0},x_{i_0} \right)>\mathcal{ M}_{j_0} u\left( y_{i_0},x_{i_0}\right).\label{eq_2-1}
			\end{gather}
		\end{claim}
		\begin{proof}[Proof]
			 We make the assumption that the above claim is false. Therefore: 
			\begin{gather}
				\text{for all\ } i\in I, u_i\left( y_{i_0},x_{i_0}\right)\leq\mathcal{M}_i u\left(y_{i_0},x_{i_0} \right) =\max_{j\neq i}\left( u_j\left( y_{i_0},x_{i_0}\right) -c_{i,j}\left( y_{i_0},x_{i_0}\right) \right)\nonumber
			\end{gather}
			We randomly select a fixed index $ l_1\in I $. Subsequently, an index  $ l_2\in\left\lbrace 1,2,\dots,l_1-1,l_1+1,\dots,m\right\rbrace  $ exists, such that: 
			\begin{gather}
				u_{l_1}\left( y_{i_0},x_{i_0}\right)\leq u_{l_2}\left( y_{i_0},x_{i_0}\right) -c_{l_1,l_2}\left( y_{i_0},x_{i_0}\right) =\max_{j\neq l_1}\left( u_j\left( y_{i_0},x_{i_0}\right) -c_{l_1,j}\left( y_{i_0},x_{i_0}\right)\right) \nonumber\\
				\iff u_{l_1}\left( y_{i_0},x_{i_0}\right)+c_{l_1,l_2}\left( y_{i_0},x_{i_0}\right)\leq u_{l_2}\left( y_{i_0},x_{i_0}\right)\label{eq_2-1_1}.
			\end{gather}
			Due to the fact that $ v $ is a supersolution, it follows that: 
			\begin{gather}
				v_{l_1}\left( y_{i_0},x_{i_0} \right)\geq\mathcal{M}_{l_1} v\left( y_{i_0},x_{i_0}\right) 
				=\max_{j\neq l_1}\left( v_j\left( y_{i_0},x_{i_0}\right) -c_{l_1,j}\left( y_{i_0},x_{i_0}\right)\right) 
				\geq v_{l_2}\left( y_{i_0},x_{i_0}\right) -c_{l_1,l_2}\left( y_{i_0},x_{i_0}\right) \nonumber
			\end{gather}
			thus
			\begin{gather}
				v_{l_2}\left(y_{i_0},x_{i_0}\right) -c_{l_1,l_2}\left( y_{i_0},x_{i_0}\right)\leq v_{l_1}\left( y_{i_0},x_{i_0} \right)\label{eq_2-1_2}.
			\end{gather}
			By adding $ (\ref{eq_2-1_1}) $ and $ (\ref{eq_2-1_2}) $ we obtain:
			\begin{gather}
				u_{l_1}\left( y_{i_0},x_{i_0}\right)-v_{l_1}\left(y_{i_0},x_{i_0}\right)\leq u_{l_2}\left( y_{i_0},x_{i_0}\right)-v_{l_2}\left( y_{i_0},x_{i_0}\right)\label{eq_2-1_3}.
			\end{gather}
			Since $ l_1\in I, $ we have that $ \delta=u_{l_1}\left( y_{i_0},x_{i_0}\right)-v_{l_1}\left(y_{i_0},x_{i_0}\right) $ and at the same time we have that:
			$$u_{l_2}\left( y_{i_0},x_{i_0}\right)-v_{l_2}\left( y_{i_0},x_{i_0}\right)\leq\sup_{[0,L]\times\bar{\Omega}}\left(u_{l_2}-v_{l_2} \right)\left(y,x \right)\leq\max_{j\in\left\lbrace1,2,\dots,m \right\rbrace } \sup_{[0,L]\times\bar{\Omega}}\left(u_j-v_j \right)\left(y,x \right)=\delta. $$
			Considering the above, combined with $ (\ref{eq_2-1_3}), $ we get that $ u_{l_2}\left( y_{i_0},x_{i_0}\right)-v_{l_2}\left( y_{i_0},x_{i_0}\right)=\delta, $ therefore $ l_2\in I\setminus\left\lbrace l_1\right\rbrace . $
			Next, we repeat the above procedure, considering that $ l_1 $ is replaced  with  $ l_2 $, and we obtain the existence of $ l_3\in I\setminus\left\lbrace l_2 \right\rbrace  $ such that:
			\begin{gather}
				u_{l_2}\left( y_{i_0},x_{i_0} \right) \leq u_{l_3}\left( y_{i_0},x_{i_0} \right) - c_{l_2,l_3}\left(y_{i_0},x_{i_0} \right) \label{eq_2-1_4}.
			\end{gather}
			Combining  $ (\ref{eq_2-1_1}) $ and $ (\ref{eq_2-1_4}) $ we obtain:
			\begin{gather}
				u_{l_1}\left( y_{i_0},x_{i_0}\right)+c_{l_1,l_2}\left( y_{i_0},x_{i_0}\right)\leq u_{l_2}\left( y_{i_0},x_{i_0}\right)\leq u_{l_3}\left( y_{i_0},x_{i_0} \right) - c_{l_2,l_3}\left(y_{i_0},x_{i_0} \right)\nonumber
			\end{gather}
			thus, we have that
			\begin{gather}
				u_{l_1}\left( y_{i_0},x_{i_0}\right)+c_{l_1,l_2}\left(y_{i_0},x_{i_0} \right)+c_{l_2,l_3}\left(y_{i_0},x_{i_0} \right)\leq u_{l_3}\left( y_{i_0},x_{i_0} \right).
			\end{gather}
			 Repeating this procedure, as many times as needed, we construct in this way a finite sequence of indexes $ \left\lbrace l_1,l_2,\dots,l_p,l_1\right\rbrace  $ with $ l_j\neq l_{j+1}, \text{for all\ } j=1,2,\dots,p-1 $ such that:
			$$u_{l_1}\left( y_{i_0},x_{i_0}\right)+c_{l_1,l_2}\left(y_{i_0},x_{i_0} \right)+c_{l_2,l_3}\left(y_{i_0},x_{i_0} \right)+\dots,c_{l_p,l_1}\left(y_{i_0},x_{i_0} \right)\leq u_{l_1}\left( y_{i_0},x_{i_0} \right)$$
			from which it is extracted that: 
			$$c_{l_1,l_2}\left(y_{i_0},x_{i_0} \right)+c_{l_2,l_3}\left(y_{i_0},x_{i_0} \right)+\dots,c_{l_p,l_1}\left(y_{i_0},x_{i_0} \right)\leq 0$$
			which leads to a contradiction with respect to the axiom $ (O_3) $. In consequence,  at least one index $ j_0\in I $ exists, such that: $ u_{j_0}\left(y_{i_0},x_{i_0} \right)>\mathcal{ M}_{j_0} u\left( y_{i_0},x_{i_0}\right). $
		\end{proof}
			Next, throughtout the analysis that follows, for the $ j_0\in I $ of the Claim  \ref{claim_6},  all the results that were proved at step 1, can be applied, keeping the same symbols for the sequence $ \left( y_{\epsilon},x_{\epsilon},\tilde{x}_{\epsilon}\right)  $.
		Using  Claim \ref{claim_6}, it is proved within the next Claim \ref{claim_7}, that a subsequence exists, $ \left(\mu_k \right)_{k\in\N}   $, of the sequence $\left( \epsilon_k\right)_{k\in\N},   $ and an index $ \hat{k}\in\N $ such that $ \text{for all\ } k\ge\hat{k}, u_{j_0}\left(y_{\mu_k},x_{\mu_k} \right)>\mathcal{ M}_{j_0} u\left( y_{\mu_k},x_{\mu_k}\right). $ As will be demonstrated later, the previous Claim, has the defining impact to finilize negative sign of function $ F $, when it acts on the sequence: $ \left( y_{\mu_k},x_{\mu_k},u_{j_0}\left(y_{\mu_k},x_{\mu_k} \right),D_x\phi_{\mu_k}\left(y_{\mu_k},x_{\mu_k},\tilde{x}_{\mu_k} \right),X  \right)_{k\in\N}, $ given that the following is true: $ \left( \alpha,D_x\phi_{\mu_k}\left(y_{\mu_k},x_{\mu_k},\tilde{x}_{\mu_k} \right),X\right) \in\bar{J}^{2,+}u_{j_0}\left( y_{\mu_k},x_{\mu_k}\right). $ A corresponding information holds for the non negativity of function $ F. $
		
		\begin{claim}\label{claim_7}
			 There exists a subsequence $(\mu_k)_{k\in\N}$ of $(\epsilon_k)_{k\in\N} $ and an index $\hat{k}\in\N$ such that $\text{for all\ } k\ge\hat{k}$
		\begin{itemize}
			\item The sequence of points $ \left(y_{\mu_k},x_{\mu_k}, \tilde{x}_{\mu_k} \right)  $ is inside of $ (0,L)\times\Omega\times\Omega. $
			\item If $ \left( \alpha,D_x\phi_{\mu_k}\left(y_{\mu_k},x_{\mu_k},\tilde{x}_{\mu_k}\right),X \right) \in\bar{J}^{2,+}u_{j_0}\left( y_{\mu_k},x_{\mu_k}\right) $, then
			\begin{gather}
				F\left( y_{\mu_k},x_{\mu_k},u_{j_0}\left(y_{\mu_k},x_{\mu_k} \right),D_x\phi_{\mu_k}\left(y_{\mu_k},x_{\mu_k},\tilde{x}_{\mu_k} \right),X  \right) \leq 0 \label{eq_2-2}.
			\end{gather}
			\item If $ \left( \alpha,-D_{\tilde{x}}\phi_{\mu_k}\left(y_{\mu_k},x_{\mu_k},\tilde{x}_{\mu_k} \right),Y\right) \in\bar{J}^{2,-}v_{j_0}\left( y_{\mu_k},\tilde{x}_{\mu_k}\right) $, then
			\begin{gather}
				F\left( y_{\mu_k},\tilde{x}_{\mu_k},v_{j_0}\left(y_{\mu_k},\tilde{x}_{\mu_k} \right),-D_{\tilde{x}}\phi_{\mu_k}\left(y_{\mu_k},x_{\mu_k},\tilde{x}_{\mu_k} \right),Y  \right) \ge 0 \label{eq_2-3}.
			\end{gather}
		\end{itemize}

	\end{claim}
	
	\begin{proof}[Proof]
			From Claim \ref{claim_6},  we get that $u_{j_0}\left(y_{i_0},x_{i_0} \right)>\mathcal{ M}_{j_0} u\left( y_{i_0},x_{i_0}\right)$. In addition, we have that $ \text{for all\ } j\in{1,2,\dots,m} $, the function $ u_j $ is upper semicontinuous. Therefore, from the sequential criterion for upper semicontinuous functions, we have that $ \limsup_{k\to\infty}u_j\left(y_{\epsilon_k},x_{\epsilon_k} \right)\leq u_j\left(y_{i_0},x_{i_0} \right).  $ In specific, if $ j\in I $, then from the Claim \ref{claim_5} we get that $ \limsup_{k\to\infty}u_j\left(y_{\epsilon_k},x_{\epsilon_k} \right)= u_j\left(y_{i_0},x_{i_0} \right). $
		We observe that 
		\begin{align}
			\mathcal{ M}_{j_0}u\left( y_{i_0},x_{i_0}\right)&=\max_{j\ne j_0}\left(u_j\left(y_{i_0},x_{i_0} \right) -c_{j_0,j}\left(y_{i_0},x_{i_0} \right)  \right)\nonumber\\
			&=\max_{j\ne j_0}\left(\limsup_{k\to\infty} u_j\left(y_{\epsilon_k},x_{\epsilon_k} \right) -c_{j_0,j}\left(y_{i_0},x_{i_0} \right)  \right)\nonumber\\
			&=\max_{j\ne j_0}\limsup_{k\to\infty} \left(u_j\left(y_{\epsilon_k},x_{\epsilon_k} \right) -c_{j_0,j}\left(y_{\epsilon_k},x_{\epsilon_k} \right)  \right)\nonumber\\
			&\geq\limsup_{k\to\infty}\max_{j\ne j_0} \left(u_j\left(y_{\epsilon_k},x_{\epsilon_k} \right) -c_{j_0,j}\left(y_{\epsilon_k},x_{\epsilon_k} \right)  \right)\nonumber\\
			&=\limsup_{k\to\infty}\mathcal{ M}_{j_0}u\left( y_{\epsilon_k},x_{\epsilon_k}\right). 
		\end{align}
		Thus, $ \mathcal{ M}_{j_0}u\left( y_{i_0},x_{i_0}\right)\geq \limsup_{k\to\infty}\mathcal{ M}_{j_0}u\left( y_{\epsilon_k},x_{\epsilon_k}\right) $ and since $$\limsup_{k\to\infty}u_{j_0}\left(y_{\epsilon_k},x_{\epsilon_k} \right)=^{\left(j_0\in I \right)} u_{j_0}\left(y_{i_0},x_{i_0} \right)>\mathcal{ M}_{j_0} u\left( y_{i_0},x_{i_0}\right),$$ we obtain that 
		\begin{gather}
			\limsup_{k\to\infty}u_{j_0}\left(y_{\epsilon_k},x_{\epsilon_k} \right)>\limsup_{k\to\infty}\mathcal{ M}_{j_0}u\left( y_{\epsilon_k},x_{\epsilon_k}\right).
		\end{gather}
		Now here exists a strictly increasing sequence of natural numbers $ \left( \sigma_k\right)_{k\in\N}  $ such that, 
		\begin{gather}
			\lim_{k\to\infty} u_{j_0}\left(y_{\epsilon_{\sigma_k}},x_{\epsilon_{\sigma_k}} \right)>\lim_{k\to\infty}\mathcal{ M}_{j_0}u\left(y_{\epsilon_{\sigma_k}},x_{\epsilon_{\sigma_k}}\right)\label{eq_2-4}.
		\end{gather}
		Then, from the above inequality, we can extract an index $ \tilde{k}\in\N $ such that $ \text{for all\ } k\geq\tilde{k} $
		\begin{gather}
			u_{j_0}\left(y_{\epsilon_{\sigma_k}},x_{\epsilon_{\sigma_k}} \right)>\mathcal{ M}_{j_0}u\left(y_{\epsilon_{\sigma_k}},x_{\epsilon_{\sigma_k}}\right)\label{eq_2-5}
		\end{gather}
		and from now on, for practical reasons, the subsequence, $ \left( \epsilon_{\sigma_k}\right) _{k\in\N} $ of $ \left( \epsilon_k\right)_{k\in\N}, $ will be denoted as $\left(  \mu_k\right)_{k\in\N}  $.
		From  $ (\ref{eq_8(1)}) $ and $ (\ref{eq_2-5}) $, if we set  $ \hat{k}=\max\left\lbrace k^*,\tilde{k}\right\rbrace  $, we have that $ \text{for all\ } k\geq\hat{k} $
		\begin{gather}
			u_{j_0}\left(y_{\mu_k},x_{\mu_k} \right)>\mathcal{ M}_{j_0}u\left(y_{\mu_k},x_{\mu_k}\right)\label{eq_2-6}\\
			\text{and}\nonumber\\
			x_{\mu_k}\in\Omega\ \wedge\ \tilde{x}_{\mu_k}\in\Omega\ \wedge y_{\mu_k}\in(0,L)\label{eq_2-7}.
		\end{gather}
		Next, we consider a random $ k\geq\hat{k} $. We have the following cases:
		\begin{itemize}
			\item Let $ \left( \alpha,D_x\phi_{\mu_k}\left(y_{\mu_k},x_{\mu_k},\tilde{x}_{\mu_k}\right),X \right) \in\bar{J}^{2,+}u_{j_0}\left( y_{\mu_k},x_{\mu_k}\right) $. Because $ u $ is  subsolution of the problem $ (IBVP) $ and $ \left(y_{\mu_k},x_{\mu_k},\tilde{x}_{\mu_k}\right)\in (0,L) \times\Omega\times\Omega $ from $ (\ref{eq_2-7}) $, it follows that: 
			$$\min\left\lbrace F\left(y_{\mu_k},x_{\mu_k},u_{j_0}\left(y_{\mu_k},x_{\mu_k} \right),D_x\phi_{\mu_k}\left(y_{\mu_k},x_{\mu_k},\tilde{x}_{\mu_k}\right),X  \right),u_{j_0}\left( y_{\mu_k},x_{\mu_k}\right) -\mathcal{ M}_{j_0}u\left(y_{\mu_k},x_{\mu_k} \right)  \right\rbrace \leq 0$$
			But from $ (\ref{eq_2-6}) $ it is necessary that:
			\begin{gather}
				F\left(y_{\mu_k},x_{\mu_k},u_{j_0}\left(y_{\mu_k},x_{\mu_k} \right),D_x\phi_{\mu_k}\left(y_{\mu_k},x_{\mu_k},\tilde{x}_{\mu_k}\right),X  \right)\leq 0 \label{eq_2-8}.
			\end{gather}
			\item Let $ \left( \alpha,-D_{\tilde{x}}\phi_{\mu_k}\left(y_{\mu_k},x_{\mu_k},\tilde{x}_{\mu_k} \right),Y\right) \in\bar{J}^{2,-}v_{j_0}\left( y_{\mu_k},\tilde{x}_{\mu_k}\right) $. Since $ v $ is a supersolution of the problem $ (BVP) $ and $ \left(y_{\mu_k},x_{\mu_k},\tilde{x}_{\mu_k}\right)\in (0,L) \times\Omega\times\Omega $ from $ (\ref{eq_2-7}) $, it follows that
			\begin{gather}
				\min\biggl\{ F\left(y_{\mu_k},\tilde{x}_{\mu_k},v_{j_0}\left(y_{\mu_k},\tilde{x}_{\mu_k} \right),-D_{\tilde{x}}\phi_{\mu_k}\left(y_{\mu_k},x_{\mu_k},\tilde{x}_{\mu_k}\right),Y  \right),\\ v_{j_0}\left( y_{\mu_k},\tilde{x}_{\mu_k}\right) -\mathcal{ M}_{j_0}v\left(y_{\mu_k},\tilde{x}_{\mu_k} \right)  \biggl\} \geq 0.
			\end{gather}
			
			Therefore, we conclude from the above that
			\begin{gather}
				F\left(y_{\mu_k},\tilde{x}_{\mu_k},v_{j_0}\left(y_{\mu_k},\tilde{x}_{\mu_k} \right),-D_{\tilde{x}}\phi_{\mu_k}\left(y_{\mu_k},x_{\mu_k},\tilde{x}_{\mu_k}\right),Y  \right)\geq 0 \label{eq_2-9}.
			\end{gather}
			
		\end{itemize}
	\end{proof}
	
	\textbf{Step 3}: (Approaching the Contradiction) 
	
	 In this step, we demonstrate how we end up with a contradiction. In order to reach this conclusion, a special form of lemma is required, the so-called maximum principle for semicontinuous functions. In specific, the Lemma \ref{Lemma_3.1}, which is presented below, is a special result case, which corresponds to Theorem 8.3 of \cite{CIL}.

	At first, we have to mention that from  Claim \ref{claim_7},  the subsequence of maximum points $ \left( y_{\mu_k},x_{\mu_k},\tilde{x}_{\mu_k}\right)_{k\in\N}  $ of functions $\Phi_{\mu_k}\left(y,x,\tilde{x} \right) := u_{j_0}\left( y,x\right)-v_{j_0}\left(y,\tilde{x} \right) -\phi_{\mu_k}\left(y,x,\tilde{x} \right),   $ on the set, $ \left[0,L \right] \times\bar{\Omega}\times\bar{\Omega}, $ is located finally  within the set $ (0,L)\times\Omega\times\Omega. $ In consequence, the points that maximize the function  $\Phi_{\mu_k}\left(y,x,\tilde{x} \right) := u_{j_0}\left( y,x\right)-v_{j_0}\left(y,\tilde{x} \right) -\phi_{\mu_k}\left(y,x,\tilde{x} \right)   $ defined on the set $ (0,L)\times\Omega\times\Omega, $ are the same with points that maximize $ \Phi_{\mu_k} $ defined on the set  $ \left[0,L \right] \times\bar{\Omega}\times\bar{\Omega} $.
	
	\begin{lemma}\label{Lemma_3.1}
		Let $ \left( y_{\mu_k},x_{\mu_k},\tilde{x}_{\mu_k}\right)_{k\geq\hat{k}}  $ be a maximum point of 
		\begin{gather}
			\Phi_{\mu_k}\left(y,x,\tilde{x} \right) :=u_{j_0}\left( y,x\right)-v_{j_0}\left(y,\tilde{x} \right) -\phi_{\mu_k}\left(y,x,\tilde{x} \right)\nonumber
		\end{gather}
		on the set $  (0,L)\times\Omega\times\Omega. $ Then, for each $ \theta>0, $ there exists $ X_{\theta},Y_{\theta}\in \mathcal{S}^n $ such that:
		\begin{itemize}
			\item \begin{gather}
				\left(\alpha,D_x\phi_{\mu_k}\left(y_{\mu_k},x_{\mu_k},\tilde{x}_{\mu_k} \right),X_{\theta}  \right)\in \bar{J}^{2,+}u_{j_0}\left(y_{\mu_k},x_{\mu_k} \right)\label{3_1}\\
				\left(\tilde{\alpha},-D_{\tilde{x}}\phi_{\mu_k}\left(y_{\mu_k},x_{\mu_k},\tilde{x}_{\mu_k} \right),Y_{\theta}  \right)\in \bar{J}^{2,-}v_{j_0}\left(y_{\mu_k},\tilde{x}_{\mu_k} \right)\label{3_2}
			\end{gather}
			\item \begin{gather}
				\begin{pmatrix}
					X_{\theta} & O \\
					O & -Y_{\theta} 
				\end{pmatrix}
				\leq A+\theta A^2
			\end{gather}
		\end{itemize}
		where $ A:=D^2_x\phi_{\mu_k}\left(y_{\mu_k},x_{\mu_k},\tilde{x}_{\mu_k} \right)  $ is the hessian matrix of $ \phi_{\mu_k}\left(y,x,\tilde{x} \right)  $ with respect of $ x  $ and $ \tilde{x}. $
	\end{lemma}

		At first we observe that for the vectors $ x=\left(x^{(1)},x^{(2)},\dots,x^{(n)} \right)  $ and $ \tilde{x}=\left(\tilde{x}^{(1)},\tilde{x}^{(2)},\dots,\tilde{x}^{(n)} \right)  $ on $ \R^n $ holds
	\begin{gather}
		D_x\phi_{\mu_k}\left(y_{\mu_k},x_{\mu_k},\tilde{x}_{\mu_k} \right) =\frac{1}{\mu_k}\left( x_{\mu_k}-\tilde{x}_{\mu_k}\right) +4\abs{x_{\mu_k}-x_{i_0}}^2\left(x_{\mu_k}-x_{i_0} \right) \nonumber\\
		-D_{\tilde{x}}\phi_{\mu_k}\left(y_{\mu_k},x_{\mu_k},\tilde{x}_{\mu_k} \right) =\frac{1}{\mu_k}\left( x_{\mu_k}-\tilde{x}_{\mu_k}\right) -4\abs{\tilde{x}_{\mu_k}-x_{i_0}}^2\left(\tilde{x}_{\mu_k}-x_{i_0} \right)\nonumber\\
		\frac{\partial^2\phi_{\mu_k}}{\partial x^{(m)\ 2}}\left(y_{\mu_k},x_{\mu_k},\tilde{x}_{\mu_k} \right) =\frac{1}{\mu_k}+4 \left( 2\left(x_{\mu_k}^{(m)} -x_{i_0}^{(m)}\right)^2+\abs{x_{\mu_k}-x_{i_0}}^2 \right) \nonumber\\
		\frac{\partial^2\phi_{\mu_k}}{\partial \tilde{x}^{(m)\ 2}}\left(y_{\mu_k},x_{\mu_k},\tilde{x}_{\mu_k} \right) =\frac{1}{\mu_k}+4 \left( 2\left(\tilde{x}_{\mu_k}^{(m)} -x_{i_0}^{(m)}\right)^2+\abs{\tilde{x}_{\mu_k}-x_{i_0}}^2 \right) \nonumber\\
		\frac{\partial^2\phi_{\mu_k}}{\partial x^{(m)}\partial x^{(\lambda)}}\left(y_{\mu_k},x_{\mu_k},\tilde{x}_{\mu_k} \right) =8\left(x_{\mu_k}^{(m)} -x_{i_0}^{(m)}\right)\left(x_{\mu_k}^{(\lambda)} -x_{i_0}^{(\lambda)}\right),\ \lambda\neq m\nonumber\\
		\frac{\partial^2\phi_{\mu_k}}{\partial \tilde{x}^{(m)}\partial \tilde{x}^{(\lambda)}}\left(y_{\mu_k},x_{\mu_k},\tilde{x}_{\mu_k} \right) =8\left(\tilde{x}_{\mu_k}^{(m)} -x_{i_0}^{(m)}\right)\left(\tilde{x}_{\mu_k}^{(\lambda)} -x_{i_0}^{(\lambda)}\right),\ \lambda\neq m\nonumber\\
		\frac{\partial^2\phi_{\mu_k}}{\partial x^{(m)}\partial \tilde{x}^{(m)}}\left(y_{\mu_k},x_{\mu_k},\tilde{x}_{\mu_k} \right) =-\frac{1}{\mu_k}=\frac{\partial^2\phi_{\mu_k}}{\partial \tilde{x}^{(m)}\partial x^{(m)}}\left(y_{\mu_k},x_{\mu_k},\tilde{x}_{\mu_k} \right)\nonumber\\
		\frac{\partial^2\phi_{\mu_k}}{\partial x^{(m)}\partial \tilde{x}^{(\lambda)}}\left(y_{\mu_k},x_{\mu_k},\tilde{x}_{\mu_k} \right) =0=\frac{\partial^2\phi_{\mu_k}}{\partial \tilde{x}^{(\lambda)}\partial x^{(m)}}\left(y_{\mu_k},x_{\mu_k},\tilde{x}_{\mu_k} \right),\ \lambda\neq m\nonumber.
	\end{gather}
	From the above, the matrix $ A:=D^2_x\phi_{\mu_k}(y_{\mu_k},x_{\mu_k},\tilde{x}_{\mu_k} ) $ is written as:
	\begin{gather}
		A=\frac{1}{\mu_k}\left(\begin{array}{cc}
			I & -I \\
			-I & I 
		\end{array}\right)
		+ \hat{\tilde{B}}( y_{\mu_k},x_{\mu_k},\tilde{x}_{\mu_k}) \nonumber
	\end{gather}
	where $ \hat{\tilde{B}}\in\R^{2n\times 2n}, $ is an appropriate matrix, of which its elements $ \hat{\tilde{B}}_{i,j}(y,x,\tilde{x}), $ are continuous functions of non fractional type, such that $ \hat{\tilde{B}}_{i,j}(y_{\mu_k},x_{\mu_k},\tilde{x}_{\mu_k})= \hat{\tilde{h}}_{i,j}(\abs{x_{\mu_k}-x_{i_0}},\abs{\tilde{x}_{\mu_k}-x_{i_0}}) $ with $ \hat{\tilde{h}}_{i,j} $ to be an appropriate continuous function of two variables on $ \R^2. $ Based on $ A $ the matrix $ A^2 $ is defined as
	\begin{gather}
		A^2=\frac{2}{\mu_k^2}\left(\begin{array}{cc}
			I & -I \\
			-I & I
		\end{array}\right)
		+ \tilde{B}( y_{\mu_k},x_{\mu_k},\tilde{x}_{\mu_k}) \nonumber
	\end{gather}
	where $ \tilde{B}\in\R^{2n\times 2n} $ is an appropriate matrix, of which its elements $ \tilde{B}_{i,j}(y,x,\tilde{x}), $ are continuous functions, of non fractional type, such that $ \tilde{B}_{i,j}(y_{\mu_k},x_{\mu_k},\tilde{x}_{\mu_k})= \tilde{h}_{i,j}(\abs{x_{\mu_k}-x_{i_0}},\abs{\tilde{x}_{\mu_k}-x_{i_0}}) $ with $ \tilde{h}_{i,j} $ to be an appropriate continuous function of two variables on $ \R^2. $ By selecting $ \theta=\mu_k, $ from Lemma \ref{Lemma_3.1} we obtain the existence of symmetric matrices $ X_{\mu_k} $ and $ Y_{\mu_k} $ such that: 
	\begin{gather}
		\left(\alpha,D_x\phi_{\mu_k}\left(y_{\mu_k},x_{\mu_k},\tilde{x}_{\mu_k} \right),X_{\mu_k}  \right)\in \bar{J}^{2,+}u_{j_0}\left(y_{\mu_k},x_{\mu_k} \right)  \text{ and }\\ \left(\tilde{\alpha},-D_{\tilde{x}}\phi_{\mu_k}\left(y_{\mu_k},x_{\mu_k},\tilde{x}_{\mu_k} \right),Y_{\mu_k}  \right)\in \bar{J}^{2,-}v_{j_0}\left(y_{\mu_k},\tilde{x}_{\mu_k} \right) \label{3_3} 
	\end{gather}
	with \begin{gather}
		\left(\begin{array}{cc}
			X_{\mu_k} & O \\
			O & -Y_{\mu_k} 
		\end{array}\right)
		\leq A+\theta A^2=\frac{3}{\mu_k} \left(\begin{array}{cc}
			I & -I \\
			-I & I
		\end{array}\right)
		+ B( y_{\mu_k},x_{\mu_k},\tilde{x}_{\mu_k}) \label{3_4}
	\end{gather}
	where $ B\in\R^{2n\times 2n} $ is an appropriate matrix, of which its elements $ B_{i,j}(y,x,\tilde{x}) $ are continuous functions of non fractional type, such that $ B_{i,j}(y_{\mu_k},x_{\mu_k},\tilde{x}_{\mu_k})= h_{i,j}\left(\abs{x_{\mu_k}-x_{i_0}},\abs{\tilde{x}_{\mu_k}-x_{i_0}}\right) $ with $ h_{i,j} $ be an appropriate continuous function of two variables on $ \R^2$, such that, $ h_{i,j}(0,0)=0 $. Then, we obtain $ \text{for all\ } i,j\in\left\lbrace 1,2,\dots,2n\right\rbrace \lim_{k\to\infty} B_{i,j}(y_{\mu_k},x_{\mu_k},\tilde{x}_{\mu_k})=0. $
	\begin{claim}\label{claim_8}
			For each $ \xi>0 $ exists natural number $ k_{\xi}\geq\hat{k} $, such that, $ \text{for all\ } k\geq k_{\xi} $
		$ (\ref{3_4}) $ holds and
		\begin{gather}
			\begin{pmatrix}
				X_{\mu_{k}} & O \\
				O & -Y_{\mu_{k}} 
			\end{pmatrix}
			\leq\frac{3}{\mu_{k}} \begin{pmatrix}
				I & -I \\
				-I & I
			\end{pmatrix}
			+\xi \begin{pmatrix}
				I & O \\
				O & I 
			\end{pmatrix}
			\iff \begin{pmatrix}
				X_{\mu_{k}}-\xi I & O \\
				O & -\left(Y_{\mu_{k}}+\xi I \right)  
			\end{pmatrix}
			\leq\frac{3}{\mu_{k}}\begin{pmatrix}
				I & -I \\
				-I & I \label{3_5}
			\end{pmatrix}.
		\end{gather}
	\end{claim}
	\begin{proof}[ Proof]
		Let $ \xi>0 $ be a random fixed number. At first, we will prove that there exists a natural number $ k_{\xi}\geq \hat{k} $ such that $ \text{for all\ } k\geq k_{\xi} $:
		\begin{gather}
			B\left( y_{\mu_{k}},x_{\mu_{k}},\tilde{x}_{\mu_{k}} \right)< \xi \begin{pmatrix}
				I & O \\
				O & I  
			\end{pmatrix}\nonumber\\
			\iff \text{for all\ } z\in\R^{2n\times 1}\setminus\left\lbrace 0 \right\rbrace ,\ 
			z^{T}B\left( y_{\mu_{k}},x_{\mu_{k}},\tilde{x}_{\mu_{k}} \right) z-\xi\ z^{T} \begin{pmatrix}
				I & O \\
				O & I  
			\end{pmatrix}
			z< 0\label{3_6}.
		\end{gather}
		Since matrix $ M:=\xi \begin{pmatrix}
			I & O \\
			O & I  
		\end{pmatrix}  $ is positive definite, it follows that it exists $ \lambda>0 $ such that, $ \text{for all\ } z\in\R^{2n\times 1}\setminus\left\lbrace 0 \right\rbrace,\ z^T M z\geq \lambda\norm{z}^2_2 $. Furthermore, $ \text{for all\ } z\in\R^{2n\times 1},\ \text{for all\ } k\geq \hat{k}$ holds that
		\begin{gather}
			z^T B\left( y_{\mu_{k}},x_{\mu_{k}},\tilde{x}_{\mu_{k}} \right) z=\left\langle z,B\left( y_{\mu_{k}},x_{\mu_{k}},\tilde{x}_{\mu_{k}} \right) z\right\rangle\nonumber\\ \leq^{(C-S)}\norm{z}_2\norm{B\left( y_{\mu_{k}},x_{\mu_{k}},\tilde{x}_{\mu_{k}} \right) z}_2\nonumber\\
			\leq\norm{B\left( y_{\mu_{k}},x_{\mu_{k}},\tilde{x}_{\mu_{k}} \right)}\norm{z}^2_2\nonumber.
		\end{gather} 
		Now $\text{for all\ } i,j\in\left\lbrace1,2,\dots,2n \right\rbrace  B_{i,j}\left( y_{\mu_{k}},x_{\mu_{k}},\tilde{x}_{\mu_{k}} \right)\xrightarrow{k\to\infty} 0  $ holds, therefore $ \norm{B\left( y_{\mu_{k}},x_{\mu_{k}},\tilde{x}_{\mu_{k}} \right)}\xrightarrow{k\to\infty}0 $. In consequence, an index $ k_{\xi}\geq \hat{k} $ will exist, such that $ \text{for all\ } k\geq k_{\xi},\ \norm{B\left( y_{\mu_{k}},x_{\mu_{k}},\tilde{x}_{\mu_{k}} \right)}<\lambda $. Hence, we obtain that $ \text{for all\ } z\in\R^{ 2n\times 1 }\setminus\left\lbrace 0 \right\rbrace,\ \text{for all\ } k\geq k_{\xi} $
		\begin{gather}
			z^T M z\geq \lambda\norm{z}^2_2>\norm{B\left( y_{\mu_{k_{\xi}}},x_{\mu_{k_{\xi}}},\tilde{x}_{\mu_{k_{\xi}}} \right)}\norm{z}^2_2\geq z^T B\left( y_{\mu_{k_{\xi}}},x_{\mu_{k_{\xi}}},\tilde{x}_{\mu_{k_{\xi}}} \right) z.
		\end{gather}
		and $ (\ref{3_6}) $ is proved. Then, through (\ref{3_4}), using the transition identity for matrix ordering, we obtain the desirable result.
	\end{proof}
	Next, throughout our analysis, we consider that $ k\geq k_{\xi}. $ From  Claim \ref{claim_7},  we obtain that $ \text{for all\ } k\geq k_{\xi} $
	\begin{gather}
		F\left( y_{\mu_k},x_{\mu_k},u_{j_0}\left(y_{\mu_k},x_{\mu_k} \right),D_x\phi_{\mu_k}\left(y_{\mu_k},x_{\mu_k},\tilde{x}_{\mu_k} \right),X  \right) \leq 0\nonumber\\
		F\left( y_{\mu_k},\tilde{x}_{\mu_k},v_{j_0}\left(y_{\mu_k},\tilde{x}_{\mu_k} \right),-D_{\tilde{x}}\phi_{\mu_k}\left(y_{\mu_k},x_{\mu_k},\tilde{x}_{\mu_k} \right),Y  \right) \ge 0\nonumber
	\end{gather}
	from which we receive that: 
	\begin{gather}
		0\leq F\left( y_{\mu_k},\tilde{x}_{\mu_k},v_{j_0}\left(y_{\mu_k},\tilde{x}_{\mu_k} \right),-D_{\tilde{x}}\phi_{\mu_k}\left(x_{\mu_k},\tilde{x}_{\mu_k},\tilde{x}_{\mu_k} \right),Y  \right)-\nonumber\\ 
		-F\left( y_{\mu_k},x_{\mu_k},u_{j_0}\left(y_{\mu_k},x_{\mu_k} \right),D_x\phi_{\mu_k}\left(y_{\mu_k},x_{\mu_k},\tilde{x}_{\mu_k} \right),X  \right)\label{3_8}.
	\end{gather}
	From the axiom $ (F2), $ for $ X=Y $ and $ p:=-D_{\tilde{x}}\phi_{\mu_k}\left(y_{\mu_k},x_{\mu_k},\tilde{x}_{\mu_k} \right) , q:=D_x\phi_{\mu_k}\left(y_{\mu_k},x_{\mu_k},\tilde{x}_{\mu_k} \right), $ we obtain that 
	\begin{gather}
		-F\left( y_{\mu_k},x_{\mu_k},u_{j_0}\left( y_{\mu_k},x_{\mu_k}\right),q,X \right) \leq\omega\left(\abs{p-q} \right)-F\left( y_{\mu_k},x_{\mu_k},u_{j_0}\left( y_{\mu_k},x_{\mu_k}\right),p,X \right) \nonumber
	\end{gather}
	i.e.,
	\begin{gather}
		-F\left( y_{\mu_k},x_{\mu_k},u_{j_0}\left( y_{\mu_k},x_{\mu_k}\right),D_x\phi_{\mu_k}\left(y_{\mu_k},x_{\mu_k},\tilde{x}_{\mu_k} \right) ,X \right) \leq\omega\left(4\left(\abs{x_{i_0}-x_{\mu_k}}^3+\abs{x_{i_0}-\tilde{x}_{\mu_k}} ^3\right)  \right) \nonumber\\-F\left( y_{\mu_k},x_{\mu_k},u_{j_0}\left( y_{\mu_k},x_{\mu_k}\right),-D_{\tilde{x}}\phi_{\mu_k}\left(y_{\mu_k},x_{\mu_k},\tilde{x}_{\mu_k} \right),X \right) \label{3_7}.
	\end{gather} 
	From $ (\ref{3_8})  $ and $(\ref{3_7})  $ we obtain that
	\begin{align}
		0&\leq F\left( y_{\mu_k},\tilde{x}_{\mu_k},v_{j_0}\left(y_{\mu_k},\tilde{x}_{\mu_k} \right),-D_{\tilde{x}}\phi_{\mu_k}\left(y_{\mu_k},x_{\mu_k},\tilde{x}_{\mu_k} \right),Y  \right)-\nonumber\\
		&-F\left(y_{\mu_k},x_{\mu_k},u_{j_0}\left(y_{\mu_k},x_{\mu_k} \right),D_x\phi_{\mu_k}\left(y_{\mu_k},x_{\mu_k},\tilde{x}_{\mu_k} \right),X\right)\nonumber\\
		&\leq F\left( y_{\mu_k},\tilde{x}_{\mu_k},v_{j_0}\left(y_{\mu_k},\tilde{x}_{\mu_k} \right),-D_{\tilde{x}}\phi_{\mu_k}\left(y_{\mu_k},x_{\mu_k},\tilde{x}_{\mu_k} \right),Y  \right)+\omega\left(4\left(\abs{x_{i_0}-x_{\mu_k}}^3+\abs{x_{i_0}-\tilde{x}_{\mu_k}} ^3\right)  \right)-\nonumber\\
		&-F\left( y_{\mu_k},x_{\mu_k},u_{j_0}\left( y_{\mu_k},x_{\mu_k}\right),-D_{\tilde{x}}\phi_{\mu_k}\left(y_{\mu_k},x_{\mu_k},\tilde{x}_{\mu_k}\right),X \right)\label{3_9}.
	\end{align}
	From the Claim \ref{claim_1}, it follows that $ \text{for all\ } k\geq k_{\xi},\ 
	$
	\begin{gather}
		0<\delta\leq\delta+\frac{1}{2\mu_k}\abs{x_{\mu_k}-\tilde{x}_{\mu_k}}^2\leq u_{j_0}\left(y_{\mu_k},x_{\mu_k} \right) -v_{j_0}\left(y_{\mu_k},\tilde{x}_{\mu_k} \right)\nonumber.
	\end{gather}
	Setting on the axiom $ (F1),$ for $ s=u_{j_0}\left( y_{\mu_k},x_{\mu_k}\right) , r=v_{j_0}\left(y_{\mu_k},\tilde{x}_{\mu_k} \right)  $, from the previous, we get that $ r<s, $ thus we obtain 
	\begin{align}
		&F\left( y_{\mu_k},\tilde{x}_{\mu_k},v_{j_0}\left(y_{\mu_k},\tilde{x}_{\mu_k} \right),-D_{\tilde{x}}\phi_{\mu_k}\left(y_{\mu_k},x_{\mu_k},\tilde{x}_{\mu_k} \right),Y  \right)\nonumber\\
		&\leq F\left( y_{\mu_k},\tilde{x}_{\mu_k},u_{j_0}\left(y_{\mu_k},x_{\mu_k} \right),-D_{\tilde{x}}\phi_{\mu_k}\left(y_{\mu_k},x_{\mu_k},\tilde{x}_{\mu_k} \right),Y  \right)-\gamma\left(u_{j_0}\left(y_{\mu_k},x_{\mu_k} \right)-v_{j_0}\left(y_{\mu_k},\tilde{x}_{\mu_k} \right) \right) \label{3_10}
	\end{align}
	and from  $ (\ref{3_9}) $ and $ (\ref{3_10}) $ we conclude that
	\begin{align}
		0&\leq F\left( y_{\mu_k},\tilde{x}_{\mu_k},v_{j_0}\left(y_{\mu_k},\tilde{x}_{\mu_k} \right),-D_{\tilde{x}}\phi_{\mu_k}\left(y_{\mu_k},x_{\mu_k},\tilde{x}_{\mu_k} \right),Y  \right)-\nonumber\\
		&-F\left(y_{\mu_k},x_{\mu_k},u_{j_0}\left(y_{\mu_k},x_{\mu_k} \right),D_x\phi_{\mu_k}\left(y_{\mu_k},x_{\mu_k},\tilde{x}_{\mu_k} \right),X\right)\nonumber\\
		&\leq F\left( y_{\mu_k},\tilde{x}_{\mu_k},u_{j_0}\left(y_{\mu_k},x_{\mu_k} \right),-D_{\tilde{x}}\phi_{\mu_k}\left(y_{\mu_k},x_{\mu_k},\tilde{x}_{\mu_k} \right),Y  \right)-\gamma\left(u_{j_0}\left(y_{\mu_k},x_{\mu_k} \right)-v_{j_0}\left(y_{\mu_k},\tilde{x}_{\mu_k} \right) \right)\nonumber\\
		&+\omega\left(4\left(\abs{x_{i_0}-x_{\mu_k}}^3+\abs{x_{i_0}-\tilde{x}_{\mu_k}} ^3\right)  \right)
		-F\left( y_{\mu_k},x_{\mu_k},u_{j_0}\left( y_{\mu_k},x_{\mu_k}\right),-D_{\tilde{x}}\phi_{\mu_k}\left(y_{\mu_k},x_{\mu_k},\tilde{x}_{\mu_k}\right),X \right).
	\end{align}
	Hence,
	\begin{align}
		0&\leq F\left( y_{\mu_k},\tilde{x}_{\mu_k},u_{j_0}\left(y_{\mu_k},x_{\mu_k} \right),-D_{\tilde{x}}\phi_{\mu_k}\left(y_{\mu_k},x_{\mu_k},\tilde{x}_{\mu_k} \right),Y  \right)-\nonumber\\
		&-F\left( y_{\mu_k},x_{\mu_k},u_{j_0}\left( y_{\mu_k},x_{\mu_k}\right),-D_{\tilde{x}}\phi_{\mu_k}\left(y_{\mu_k},x_{\mu_k},\tilde{x}_{\mu_k}\right),X \right)-\nonumber\\
		&-\gamma\left(u_{j_0}\left(y_{\mu_k},x_{\mu_k} \right)-v_{j_0}\left(y_{\mu_k},\tilde{x}_{\mu_k} \right) \right)+\omega\left(4\left(\abs{x_{i_0}-x_{\mu_k}}^3+\abs{x_{i_0}-\tilde{x}_{\mu_k}} ^3\right)  \right).
		\label{3_11}
	\end{align}
	Next, we set $ X:=X_{\mu_k}-\xi I,\ Y:=Y_{\mu_k}+\xi I,\ p:=-D_{\tilde{x}}\phi_{\mu_k}\left(y_{\mu_k},x_{\mu_k},\tilde{x}_{\mu_k} \right). $ From $ (\ref{3_5}) $ of the Claim \ref{claim_8}, using the axiom $ (F3) $, we have  
	\begin{gather}
		F\left( y_{\mu_k},\tilde{x}_{\mu_k},u_{j_0}\left( y_{\mu_k},x_{\mu_k}\right),p,Y_{\mu_k}+\xi I \right)-F\left( y_{\mu_k},x_{\mu_k},u_{j_0}\left( y_{\mu_k},x_{\mu_k}\right),p,X_{\mu_k}-\xi I \right)\nonumber\\
		\leq\omega\left(\frac{1}{\mu_k}\abs{x_{\mu_k}-\tilde{x}_{\mu_k}}^2+\abs{x_{\mu_k}-\tilde{x}_{\mu_k}}\left( \abs{p}+1\right)  \right) \label{3_12}.
	\end{gather}
	In addition, with the double use of axiom $ (F2) $, setting $ p=q $, we obtain: 
	\begin{align}
		F\left( y_{\mu_k},x_{\mu_k},u_{j_0}\left( y_{\mu_k},x_{\mu_k}\right),p,X_{\mu_k}-\xi I \right)-F\left( y_{\mu_k},x_{\mu_k},u_{j_0}\left( y_{\mu_k},x_{\mu_k}\right),p,X_{\mu_k} \right)		&\leq\omega\left(\norm{X_{\mu_k}-\left(X_{\mu_k}-\xi I \right) } \right)\nonumber\\
		&=\omega\left( \xi\right) \label{3_13}
		\end{align}
	\begin{align}
		F\left( y_{\mu_k},\tilde{x}_{\mu_k},u_{j_0}\left( y_{\mu_k},x_{\mu_k}\right),p,Y_{\mu_k} \right)-F\left( y_{\mu_k},y_{\mu_k},u_{j_0}\left( y_{\mu_k},x_{\mu_k}\right),p,Y_{\mu_k}+\xi I \right)
		&\leq\omega\left(\norm{Y_{\mu_k}-\left(Y_{\mu_k}+\xi I \right) } \right)\nonumber\\
		&=\omega\left( \xi\right) \label{3_14}
	\end{align}
	and by summing  $ (\ref{3_13}) $ and $ (\ref{3_14}) $, we obtain 
	\begin{gather}
		F\left( y_{\mu_k},x_{\mu_k},u_{j_0}\left( y_{\mu_k},x_{\mu_k}\right),p,X_{\mu_k}-\xi I \right)-F\left( y_{\mu_k},x_{\mu_k},u_{j_0}\left( y_{\mu_k},x_{\mu_k}\right),p,X_{\mu_k} \right)\nonumber\\
		+F\left( y_{\mu_k},\tilde{x}_{\mu_k},u_{j_0}\left( y_{\mu_k},x_{\mu_k}\right),p,Y_{\mu_k} \right)-F\left( y_{\mu_k},y_{\mu_k},u_{j_0}\left( y_{\mu_k},x_{\mu_k}\right),p,Y_{\mu_k}+\xi I \right)
		\leq 2\omega\left( \xi\right) \nonumber\\
		\iff  F\left( y_{\mu_k},y_{\mu_k},u_{j_0}\left( y_{\mu_k},x_{\mu_k}\right),p,Y_{\mu_k} \right)-F\left( y_{\mu_k},x_{\mu_k},u_{j_0}\left( y_{\mu_k},x_{\mu_k}\right),p,X_{\mu_k} \right)\nonumber\\
		\leq F\left( y_{\mu_k},\tilde{x}_{\mu_k},u_{j_0}\left( y_{\mu_k},x_{\mu_k}\right),p,Y_{\mu_k}+\xi I \right)-F\left( y_{\mu_k},x_{\mu_k},u_{j_0}\left( y_{\mu_k},x_{\mu_k}\right),p,X_{\mu_k}-\xi I \right)+2\omega\left( \xi\right)
		\label{3_15}.
	\end{gather}
	By combining $ (\ref{3_12}) $ and $ (\ref{3_15}) $ we receive that
	\begin{gather}
		F\left( y_{\mu_k},\tilde{x}_{\mu_k},u_{j_0}\left( y_{\mu_k},x_{\mu_k}\right),p,Y_{\mu_k} \right)-F\left( y_{\mu_k},x_{\mu_k},u_{j_0}\left( y_{\mu_k},x_{\mu_k}\right),p,X_{\mu_k} \right)\leq 2\omega\left( \xi\right) +
		\nonumber\\ +\omega\left(\frac{1}{\mu_k}\abs{x_{\mu_k}-\tilde{x}_{\mu_k}}^2+\abs{x_{\mu_k}-\tilde{x}_{\mu_k}}\left( \abs{p}+1\right)  \right)\label{3_16}
	\end{gather}
	and finally, through a combination of $ (\ref{3_11}) $ and $ (\ref{3_16}) $, we obtain that
	\begin{gather}
		0\leq \omega\left(4\left(\abs{x_{i_0}-x_{\mu_k}}^3+\abs{x_{i_0}-\tilde{x}_{\mu_k}} ^3\right)  \right)
		+2\omega\left(\xi \right)-\gamma\left(u_{j_0}\left(y_{\mu_k},x_{\mu_k} \right)-v_{j_0}\left(y_{\mu_k},\tilde{x}_{\mu_k} \right) \right)+\nonumber\\ +\omega\left(\frac{1}{\mu_k}\abs{x_{\mu_k}-\tilde{x}_{\mu_k}}^2+\abs{x_{\mu_k}-\tilde{x}_{\mu_k}}\left( \abs{-D_{\tilde{x}}\phi_{\mu_k}\left(y_{\mu_k},x_{\mu_k},\tilde{x}_{\mu_k} \right)}+1\right)  \right).
	\end{gather}
 Next, it is clear that
	\begin{gather}
		0\leq\limsup_{k\to\infty}\left(-\gamma\left(u_{j_0}\left(y_{\mu_k},x_{\mu_k} \right)-v_{j_0}\left(y_{\mu_k},\tilde{x}_{\mu_k} \right) \right)\right)  + \limsup_{k\to\infty}\left( \omega\left(4\left(\abs{x_{i_0}-x_{\mu_k}}^3+\abs{x_{i_0}-\tilde{x}_{\mu_k}} ^3\right)  \right)\right) +\nonumber\\
		+2\omega\left(\xi \right)
		+\limsup_{k\to\infty}\left( \omega\left(\frac{1}{\mu_k}\abs{x_{\mu_k}-\tilde{x}_{\mu_k}}^2+\abs{x_{\mu_k}-\tilde{x}_{\mu_k}}\left( \abs{-D_{\tilde{x}}\phi_{\mu_k}\left(y_{\mu_k},x_{\mu_k},\tilde{x}_{\mu_k} \right)}+1\right)  \right)\right) \label{3_17}
	\end{gather}
	and in specific we have the following estimates for each quantity that contains $ \limsup: $
	\begin{align}
		\limsup_{k\to\infty}\left(-\gamma\left(u_{j_0}\left(y_{\mu_k},x_{\mu_k} \right)-v_{j_0}\left(y_{\mu_k},\tilde{x}_{\mu_k} \right) \right)\right)
		&\leq-\gamma\ \limsup_{k\to\infty}\left(u_{j_0}\left(y_{\mu_k},x_{\mu_k} \right)-v_{j_0}\left(y_{\mu_k},\tilde{x}_{\mu_k} \right) \right) \nonumber\\
		&\leq -\gamma \left( \limsup_{k\to\infty}\left(u_{j_0}\left(y_{\mu_k},x_{\mu_k} \right)\right)+\limsup_{k\to\infty}\left(-v_{j_0}\left(y_{\mu_k},\tilde{x}_{\mu_k} \right)\right)  \right) \nonumber\\
		&=-\gamma \left( \limsup_{k\to\infty}\left(u_{j_0}\left(y_{\mu_k},x_{\mu_k} \right)\right)-\liminf_{k\to\infty}\left(v_{j_0}\left(y_{\mu_k},\tilde{x}_{\mu_k} \right)\right)  \right) \nonumber\\
		&=-\gamma \left( u_{j_0}\left(y_{i_0},x_{i_0} \right)-v_{j_0}\left(y_{i_0},x_{i_0}\right)\right)  \nonumber\\
		&=-\gamma\ \delta.
	\end{align}
	It is true that for the sequence of real numbers $ b_k:=4\left(\abs{x_{i_0}-x_{\mu_k}}^3+\abs{x_{i_0}-\tilde{x}_{\mu_k}} ^3\right), $ that $ b_k\xrightarrow{k\to\infty}0 $. Then,  from the sequential criterion of continuity of function $ \omega $, and in addition with the limit relation of a function and continuity, we have 
	$$\lim_{k\to\infty}\omega(b_k)=\omega(0)=\lim_{x\to 0^+}\omega(x)\equiv\omega(0^+)=0.$$
	Thus, we obtain $ \limsup_{k\to\infty}\omega(b_k)=0 $. In a similar way, it is proved that $ \limsup_{k\to\infty}\omega(m_k)=0 $ where $ m_k:=\frac{1}{\mu_k}\abs{x_{\mu_k}-\tilde{x}_{\mu_k}}^2+\abs{x_{\mu_k}-y\tilde{x}_{\mu_k}}\left( \abs{-D_{\tilde{x}}\phi_{\mu_k}\left(y_{\mu_k},x_{\mu_k},\tilde{x}_{\mu_k} \right)}+1\right). $
	From the above, and  $ (\ref{3_17}), $ we arrive at a conclusion where: 
	$$\gamma \delta\leq 2\omega(\xi).$$
	The above inequality was proved for a random $ \xi>0 $.  Then we have 
	$ \gamma \delta=\lim_{\xi\to 0^+} \gamma\ \delta\leq\lim_{\xi\to 0^+} 2\ \omega(\xi)=2\ \omega(0^+)=0.$
	In consequence, $ \gamma\ \delta\leq 0, $ which is a contratiction, since $ \gamma, \delta>0. $
	
	The proof of the above result, was established in the case that $ y_{i_0}<L. $ In the case when $ y_{i_0}= L,$ (see step 1), we can process in a different way, by using an appropriate modification of $ u $.
	Specifically, while having in mind that the functions $ u $ and $ v $ are viscosity subsolutions and supersolutions respectively for the problem $ \left(BVP\right)  $, we define the following modified function for a random parameter $ \theta>0. $
	\begin{gather}
		u^{\theta}:[0,L)\times\bar{\Omega}\to\R^m\nonumber\\
		u^{\theta}(y,x):=u(y,x)+h^{\theta}(y,x)\nonumber\\
	\end{gather}
	where  $ h^{\theta}(y,x) $ is an appropriate function such that the following  identities hold: $ \text{for all\ } i\in\left\lbrace 1,2,\dots,m\right\rbrace   $
	\begin{itemize}
		\item [($\alpha$)] $\text{for all\ }(y,x)\in\left[ 0,L\right)\times\bar{\Omega},\ \lim_{\theta\to 0^+}u^{\theta}_i(y,x)=u_i(y,x)$
		\item[($\beta$)]$ \lim_{y\to L^-}u_i^{\theta}(y,x)=-\infty$ uniformly on $ \bar{\Omega} $\\
		with the following interpretation: We set $ (g^i_y)_{y\in\left[ 0,L\right) }$ to be a family of functions\\ $ g^i_y(x):=u_i^{\theta}(y,x),\ x\in\bar{\Omega} $ for which we demand to converge uniformly on $ \bar{\Omega}, $ i.e.,  
		$$\text{for all\ } M>0,\text{there exist}\ \delta(M)>0,\ \text{for all\ } y\in(L-\delta,L),\ \text{for all\ } x\in\bar{\Omega}: u^{\theta}_i(y,x)<-M.$$
	\end{itemize}
	 Next, we will see a Proposition that provides us a sufficent condition, such that the limit of a sum of two functions is $ -\infty $, uniformly on $ \bar{\Omega} $  while $ y\to L^-. $ Furthermore, it is proved that, if an upper semicontinuous function  satisfies the above limiting behaviour, then it is necessary that it will attain a maximum point in $ \left[0,L \right)\times\bar{\Omega}.  $
	\begin{prop}\label{prop_4_1}
			Let $ \kappa,\rho:\left[ 0,L\right)\times\bar{\Omega}\to\R   $ be mappings, such that $ \kappa $ is upper bounded and \\ $ \lim_{y\to L^-}\rho(y,x)=-\infty $ uniformly on $\bar{\Omega}. $ Then, $ \lim_{y\to L^-}\left(\kappa(y,x)+\rho(y,x) \right) =-\infty $ uniformly on $ \bar{\Omega}. $
	\end{prop}
	\begin{proof}[Proof]
		 Let $ M>0 $ be random. Due to the fact that the function  $ \kappa $  is upper bounded, there will be a positive constant $ C>0,$ such that, $ \text{for all\ } (y,x)\in \left[0,L \right)\times\bar{\Omega},\ \kappa(y,x)\leq C. $ Furthermore, because $ \lim_{y\to L^-}\left(\kappa(y,x)+\rho(y,x) \right) =-\infty $, in specific for $ \tilde{M}=M+C>0 $, it will exists $ \delta(\tilde{M}) >0,\\ \text{for all\ } y\in (L-\delta,L),\ \text{for all\ } x\in \bar{\Omega},\rho(y,x)<-\tilde{M}=-M-C$. Then $ \text{for all\ } y\in(L-\delta,L),\ \text{for all\ } x\in\bar{\Omega} $
		$$\kappa(y,x)+\rho(y,x)\leq M+\rho(y,x)<C+(-M-C)=-M$$
		In consequence, we have the desirable result.
	\end{proof}

	\begin{prop}\label{prop_4_2}
			Let $ \kappa:\left[ 0,L\right)\times\bar{\Omega}\to\R   $ be upper semicontinuous with the identity $ \lim_{y\to L^-}\kappa(y,x)=-\infty $ uniformly on $\bar{\Omega}. $ Then exists $  (\tilde{y},\tilde{x}) \in \left[0,L \right)\times\bar{\Omega} $ such that $ \kappa(\tilde{y},\tilde{x})=\max_{\left[0,L \right)\times\bar{\Omega} }\kappa(y,x) $.
	\end{prop}
	
	\begin{proof}[ Proof]
		 Let $ \lim_{y\to L^-}\kappa(y,x)=-\infty, $ uniformly on $ \bar{\Omega}. $ We consider $ y^*\in\left[0,L \right)  $ and $ x^*\in\bar{\Omega} $ random. We define  $ M:=\abs{\kappa(y^*,x^*)}+1>0. $ Because $ \lim_{y\to L^-}\kappa(y,x)=-\infty, $ uniformly on $ \bar{\Omega} $, it holds that
		\begin{gather}\label{prop_4_2_eq_1}
			\text{there exist}\tilde{\delta}(M_{x^*})>0,\ \text{for all\ } y\in(L-\tilde{\delta},L),\ \text{for all\ } x\in\bar{\Omega},\ u(y,x)<-M_{x^*}=-\abs{\kappa(y^*,x^*)}-1
		\end{gather}
		We select $ \delta_0\in(0,\tilde{\delta}), $ such that $ y^*<L-\delta_0. $ Then the $ \kappa $ as an upper semicontinuous on the compact $ \left[0,L-\delta_0 \right] \times\bar{\Omega} $ will attain maximum value. Thus,
		\begin{gather}\label{prop_4_2_eq_2}
			\text{there exist}(\tilde{y},\tilde{x})\in\left[ 0,L-\delta_0\right]\times\bar{\Omega},\ \text{for all\ }(y,x)\in\left[ 0,L-\delta_0\right]\times\bar{\Omega},\ \kappa(\tilde{y},\tilde{x})\geq\kappa(y,x).
		\end{gather} 
		Since $ y^*<L-\delta_0 $ and $ x^*\in\bar{\Omega} $, will apply that $ \text{for all\ } y\in(L-\delta_0,L)\subset(L-\tilde{\delta},L),\ \text{for all\ } x\in\bar{\Omega}, $
		\begin{gather}
			\kappa(\tilde{y},\tilde{x})\geq^{(\ref{prop_4_2_eq_2})}\kappa(y^*,x^*)\geq-\abs{\kappa(y^*,x^*)}>-\abs{\kappa(y^*,x^*)}-1>^{(\ref{prop_4_2_eq_1})}\kappa(y,x)\nonumber.
		\end{gather}
		Therefore, it is shown that $ \text{for all\ } y\in\left[0,L \right)\times\bar{\Omega},\ \kappa(\tilde{y},\tilde{x}) \geq\kappa(y,x). $
	\end{proof}
	
	An example of a function $ u^{\theta}:\left[0,L \right)\times\bar{\Omega}\to\R^m  $ that satisfies the assumptions $ (\alpha) $ and $ (\beta) $, is
	\begin{gather}
		u^{\theta}_i(y,x):=u_i(y,x)-\frac{\theta}{L-y},\ \text{for all\ } (y,x)\in \left[ 0,L\right) \times\bar{\Omega}
	\end{gather}
	specifically,  $ h^{\theta}(y,x):=-\frac{\theta}{L-y},\ \text{for all\ } (y,x)\in \left[ 0,L\right) \times\bar{\Omega}. $ We observe that $ h $ as continuous, will be simultaneously upper and lower semicontinuous and furthermore, as $ y $ tends to $ L^- $, its limit will be $ -\infty $, uniformly on $ \bar{\Omega}$. Furthermore, we have that $ u_i^{\theta}(y,x)-v_i(y,x)=u_i(y,x)-v_i(y,x)+h^{\theta}(y,x). $ The first statement is that  $ u_i-v_i $ is upper bounded on  $ \left[ 0,L\right)\times\bar{\Omega}. $ Indeed,
	\begin{align}
		\text{for all\ } i\in\left\lbrace 1,2,,\dots,m \right\rbrace,\ \sup_{[0,L)\times\bar{\Omega}}(u_i-v_i)(y,x)&\leq \sup_{[0,L]\times\bar{\Omega}}(u_i-v_i)(y,x)\nonumber\\
		&\leq \max_{j\in\{1,2,\dots,m \}}\sup_{[0,L]\times\bar{\Omega}}(u_i-v_i)(y,x)\nonumber\\
		&=\sup_{[0,L]\times\bar{\Omega}}(u_{i_0}-v_{i_0})(y,x)=u_{i_0}(L,x_{i_0})-v_{i_0}(L,x_{i_0})=\delta
	\end{align}
	 then from  Proposition \ref{prop_4_1}, we obtain that the limit of $ u_i^{\theta}-v_i$ as $ y\to L^- $ is $ -\infty $ uniformly on $ \bar{\Omega}. $ Having in mind that $ u_i^{\theta}-v_i $ is upper semicontinuous as a sum of upper semicontinuous functions, we obtain from Proposition \ref{prop_4_2} that there will be a maximum point of $ u_i^{\theta}-v_i $ inside of $ \left[0,L \right)\times\bar{\Omega}.  $\\
	\\
	 Next, it is proved, through the claims below, that $ u^{\theta}:\left[0,L \right)\times\bar{\Omega}\to\R^m $ is a viscosity subsolution of an appropriate modified system. 
	
	\newpage
	\begin{claim}\label{Isxyrismos_epektasi}
		$\text{For all\ } (y,x)\in(0,L)\times\Omega,\ \text{for all\ } p\in\R^n,\ \text{for all\ } X\in\mathcal{S}^n,\  \text{for all\ } i\in\left\lbrace 1,2,\dots,m\right\rbrace $ holds that:
		\begin{itemize}
			\item[( i)] $ u_i(y,x)>\mathcal{ M}_i u(y,x)\iff u_i^{\theta}(y,x)>\mathcal{ M}u^{\theta}(y,x) $
			\item[( ii)] $F\left(y,x,u_i^{\theta}(y,x),p,X \right)\leq F\left(y,x,u_i(y,x),p,X \right)$
		\end{itemize}
	\end{claim}
	\begin{proof}[ Proof]
	
		\begin{itemize}
			\item[(i)] Let $ (y,x)\in(0,L)\times\Omega $ be a random point. We observe that $ \text{for all\ } i\in \left\lbrace 1,2,\dots,m \right\rbrace  $
			\begin{align}
				u_i(y,x)-\mathcal{ M}_i u(y,x)&=u_i(y,x)-\frac{\theta}{L-y}+\frac{\theta}{L-y}-\max_{j\ne i}\left(u_j(y,x)-c_{i,j}(y,x) \right) \nonumber\\
				&=\left( u_i(y,x)-\frac{\theta}{L-y}\right) -\max_{j\ne i}\left[\left( u_j(y,x)-\frac{\theta}{L-y}\right)-c_{i,j}(y,x) \right] \nonumber\\
				&=u_i^{\theta}(y,x)-\max_{j\ne i}\left( u_j^{\theta}(y,x)-c_{i,j}(y,x) \right)\nonumber\\
				&=u_i^{\theta}(y,x)-\mathcal{M}_i u^{\theta}(y,x)
			\end{align}
			and the desired result is immediate.
			\item[(ii)] At first, we see that $ \text{for all\ } (y,x)\in(0,L)\times\Omega,\ u_i^{\theta}(y,x)<u_i(y,x). $ We set $ r:=u_i^{\theta}(y,x) $ and \\$ s:=u_i(y,x).  $ From axiom $ (F_1), $ we obtain that:
			\begin{gather}
				\gamma(s-r)\leq F(y,x,s,p,X)-F(y,x,r,p,X)\nonumber\\
				\text { i.e, }\ \gamma(u_i(y,x)-u_i^{\theta}(y,x))\leq F(y,x,(u_i(t,x),p,X)-F(y,x,u_i^{\theta}(y,x),p,X)\nonumber\\
				\begin{align} \text{ equivalently, }\ F(y,x,u_i^{\theta}(y,x),p,X)&\leq F(y,x,(u_i(y,x),p,X)- \gamma(u_i(y,x)-u_i^{\theta}(y,x))\nonumber\\
					&=F(y,x,(u_i(y,x),p,X)-\frac{\gamma\theta}{L-y}\nonumber\\
					&<F(y,x,(u_i(y,x),p,X)
				\end{align}
			\end{gather}
			where the last inequality holds, due to the $ \frac{\gamma \theta}{L-y}>0. $
		\end{itemize}
	\end{proof}
	
	\begin{claim}
		 We consider the following modified system which depends on the positive parameter $ \theta>0 $ 
		$$\begin{cases}
			\min\biggl\{ F\bigl( y,x,q_{i}(y,x),D q_{i}(y,x),D^2 q_{i}(y,x)\bigl),  q_{i}(y,x)-\max_{j\neq i}\bigl( q_{j}(y,x)-c_{ij}(y,x)\bigl)\biggl\}=0, \left(y,x \right)\in\Omega_{L}\\ 
			q_{i}(0,x)=g_{i}(x)-\frac{\theta}{L},\ x\in\bar{\Omega}\ \
			q_{i}(y,x)=f_i(y,x)-\frac{\theta}{L-y},  (y,x)\in(0,L)\times\partial{\Omega}
		\end{cases}$$
		
		then, $ u^{\theta}:\left[0,L \right)\times\bar{\Omega}\to\R^m  $ and $  v:\left[0,L \right)\times\bar{\Omega}\to\R^m,  $ are viscosity subsolutions  and supersolutions respectively of the above problem.
	\end{claim}
	\begin{proof}[Proof]
		 At first, we will prove that $ u^{\theta}:\left[0,L \right)\times\bar{\Omega}\to\R^m  $ is a viscosity subsolution of the above system. Indeed, let  $ i\in \left\lbrace 1,2,\dots,m \right\rbrace,\  (y,x)\in(0,L) $ be random and $ (\alpha,p,X)\in \bar{J}^{2,+}u_i(y,x). $ Then equivalently, we have $ \left(\alpha+\frac{\theta}{\left( L-y\right)^2} ,p,X \right)\in \bar{J}^{2,+}u^{\theta}_i(y,x).  $ Due to the fact that function  $ u $ is a viscosity subsolution of $ (BVP) $, then the following holds: 
		$$\min\left\lbrace F(y,x,u_i(y,x),p,X),\ u_i(y,x)-\mathcal{M}_i u(y,x)\right\rbrace\leq 0.$$
		From the above, we extract the following cases:
		\begin{itemize}
			\item[(i)] Let $ \min\left\lbrace F(y,x,u_i(y,x),p,X),\ u_i(y,x)-\mathcal{M}_i u(y,x)\right\rbrace=u_i(y,x)-\mathcal{M}_i u(y,x). $\\ Then, $u_i(y,x)-\mathcal{M}_i u(y,x)\leq 0.  $ 
			From Claim \ref{Isxyrismos_epektasi} $(i)$, it is necessary that\\ $  u_i^{\theta}(y,x)-\mathcal{ M}u^{\theta}(y,x)\leq 0. $ Then, 
			$$\min\left\lbrace F(y,x,u_i^{\theta}(y,x),p,X),\ u_i^{\theta}(y,x)-\mathcal{M}_i u^{\theta}(y,x)\right\rbrace\leq u_i^{\theta}(y,x)-\mathcal{M}_i u^{\theta}(y,x)\leq 0.$$
			\item[(ii)]Let $ \min\left\lbrace F(y,x,u_i(y,x),p,X),\ u_i(y,x)-\mathcal{M}_i u(y,x)\right\rbrace=F(y,x,u_i(y,x),p,X). $\\ Then, $ F(y,x,u_i(y,x),p,X)\leq 0. $
			From  Claim \ref{Isxyrismos_epektasi} $(ii)$, we obtain that
			$$F(y,x,u_i^{\theta}(y,x),p,X) \leq F(y,x,u_i(y,x),p,X).$$
			Then
			$$\min\left\lbrace F(y,x,u_i^{\theta}(y,x),p,X),\ u_i^{\theta}(y,x)-\mathcal{M}_i u^{\theta}(y,x)\right\rbrace\leq F(y,x,u_i^{\theta}(y,x),p,X)\leq 0.$$
			Regarding the initial and boundary conditions, we obtain that
			\begin{gather}
				\text{for all\ } x\in\bar{\Omega},\ u^{\theta}_i(0,x)=u_i(0,x)-\frac{\theta}{L}\leq^{(u\ subsolution )} g_i(x)-\frac{\theta}{L}\nonumber\\
				\text{for all\ } y\in(0,L),\ \text{for all\ } x\in\partial{\Omega},\ u^{\theta}_i(y,x)=u_i(y,x)-\frac{\theta}{L-y}\leq^{(u\ subsolution )}f_i(y,x)-\frac{\theta}{L-y}\nonumber.
			\end{gather}
		\end{itemize}	
			Therefore, function $ u^{\theta} $ is a viscosity subsolution of the above problem. As for  $ v, $ it is clear that it is a viscosity supersolution of the above problem.
		\end{proof}
		 From the above remarks, we follow the same steps as we did for functions  $ u $ and $ v $, with a difference, that in place of function $ u $, we place  function $ u^{\theta} $. In this way, we receive the result $ u_i^{\theta}(y,x)\leq v_i(y,x), \text{for all\ } i\in \left\lbrace 1,2,\dots,m \right\rbrace, \text{for all\ } \left(y,x \right)\in \left[0,L \right) \times\bar{\Omega}.   $ Then, we fix a random point,  $ (y,x)\in \left[0,L \right)\times\bar{\Omega},  $ and we leave parameter  $ \theta $ to tend to 0. From property $ (\alpha) $, we obtain the desirable result.

	\end{proof}
	
	Using now the Comparison Principle, we succeed in proving the desired uniqueness property of viscosity solutions.
\begin{theorem}[Uniqueness of the Solution of the Problem  $\left(BVP \right) $]
	 If there is a viscosity solution of the problem $\left(BVP\right) $, then it is unique.
\end{theorem}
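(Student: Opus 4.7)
The plan is to deduce uniqueness directly from the Comparison Principle (Theorem \ref{Comparison Principle}) by exploiting the symmetric role that sub- and supersolutions will play when both functions in question are full viscosity solutions. Suppose $u,w:[0,L]\times\bar{\Omega}\to\R^m$ are two viscosity solutions of $(BVP)$. By the definition of viscosity solution (part $(\gamma)$), each of $u$ and $w$ is bounded on $[0,L]\times\bar{\Omega}$, continuous on $[0,L)\times\bar{\Omega}$, and satisfies simultaneously conditions $(ii)$ and $(iii)$ from both $(\alpha)$ and $(\beta)$. In particular, continuity on $[0,L)\times\bar{\Omega}$ makes each of $u,w$ both upper and lower semicontinuous there, so they qualify both as subsolutions and as supersolutions of $(BVP)$.

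Given this, the first step is to regard $u$ as a subsolution and $w$ as a supersolution. Since the hypotheses $(F1)$--$(F3)$ and $(O_1)$--$(O_3)$ of Theorem \ref{Comparison Principle} are standing assumptions, the Comparison Principle applies and yields
\begin{gather}
u_i(y,x)\leq w_i(y,x),\ \text{for all\ } i\in\{1,2,\dots,m\},\ \text{for all\ } (y,x)\in[0,L)\times\bar{\Omega}.\nonumber
\end{gather}
The second step is to swap the roles: now $w$ plays the part of the subsolution and $u$ the part of the supersolution. A second application of Theorem \ref{Comparison Principle} gives the reverse inequality $w_i(y,x)\leq u_i(y,x)$ on the same set.

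Combining the two inequalities componentwise delivers $u_i(y,x)=w_i(y,x)$ for every $i\in\{1,2,\dots,m\}$ and every $(y,x)\in[0,L)\times\bar{\Omega}$, which is precisely the notion of uniqueness stated at the beginning of Section 3. There is essentially no technical obstacle here: all of the delicate analysis (doubling of variables, obstacle avoidance, the matrix inequality, the penalty trick to handle $y_{i_0}=L$) has already been absorbed into the proof of Theorem \ref{Comparison Principle}. The only point that requires a brief remark is why a viscosity solution simultaneously fulfills the subsolution and supersolution definitions; this is immediate from continuity on $[0,L)\times\bar{\Omega}$, since a continuous function is both upper and lower semicontinuous, and part $(\gamma)$ explicitly imposes the remaining conditions $(ii)$ and $(iii)$ of both $(\alpha)$ and $(\beta)$.
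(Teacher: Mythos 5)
There is a genuine gap at the very first step, namely the claim that a viscosity solution ``qualifies both as a subsolution and as a supersolution of $(BVP)$.'' Look carefully at part $(\gamma)$ of the definition: a viscosity solution is only required to be \emph{bounded} on $[0,L]\times\bar{\Omega}$ and \emph{continuous} on $[0,L)\times\bar{\Omega}$, and to satisfy conditions $(ii)$ and $(iii)$ of $(\alpha)$ and $(\beta)$ --- it is \emph{not} required to satisfy the semicontinuity conditions $(i)$. Those conditions demand upper (resp.\ lower) semicontinuity on the full closed set $\bar{\Omega}_L=[0,L]\times\bar{\Omega}$, including the slice $y=L$, where a viscosity solution is only assumed bounded and may fail to be semicontinuous in either direction. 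Your remark that continuity on $[0,L)\times\bar{\Omega}$ gives both semicontinuities ``there'' is true but establishes semicontinuity on the wrong set. Consequently the hypotheses of Theorem \ref{Comparison Principle} are not verified for the pair $(u,w)$, and this is not merely cosmetic: the proof of the Comparison Principle begins by using upper semicontinuity of $u_i-v_i$ on the \emph{compact} set $[0,L]\times\bar{\Omega}$ to extract a maximizer $(y_{i_0},x_{i_0})$, which can fail without semicontinuity at $y=L$.

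The paper repairs exactly this point: given two solutions $h$ and $w$, it constructs modifications $\hat{h}^1,\hat{w}^1$ (redefined at $y=L$ by the global upper bound $M$) and $\hat{h}^2,\hat{w}^2$ (redefined at $y=L$ by the global lower bound $m$), which agree with $h,w$ on $[0,L)\times\bar{\Omega}$, are genuinely upper resp.\ lower semicontinuous on all of $[0,L]\times\bar{\Omega}$, and inherit the sub-/supersolution properties. The Comparison Principle is then applied to the pairs $(\hat{h}^1,\hat{w}^2)$ and $(\hat{w}^1,\hat{h}^2)$, and the conclusion on $[0,L)\times\bar{\Omega}$ transfers back to $h$ and $w$ because the modifications coincide with the originals there. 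Your two-sided application of comparison with the roles swapped is the right overall strategy; what is missing is the intermediate construction that makes the Comparison Principle legitimately applicable.
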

\begin{proof}[Proof]
	 We consider  $ h:=\left( h_{1},h_{2},\dots,h_{m} \right):[0,L]\times\bar{\Omega}\to\R $ and $ w:=\left( w_{1},w_{2}\dots,w_{m}  \right):[0,L]\times\bar{\Omega}\to\R$  two random viscosity solutions of problem $ (BVP). $ We will prove that $ h=w $ on $ [0,L)\times\bar{\Omega}. $ Indeed, from the fact that  $ h ,w $ are  viscosity solutions of problem $ (BVP), $ we can construct\footnote{Let $ V:[0,L]\times\bar{\Omega}\to\R^m $ with $ V=( V_1,V_2,\dots, V_m) $ be a bounded function\\ (i.e $\text{for all\ } i\in\{1,2,\dots,m\},\  \sup\{V_i(y,x)\ :\  (y,x)\in[0,L]\times\bar{\Omega}\}<+\infty $ and $ \inf\{V_i(y,x)\ :\  (y,x)\in[0,L]\times\bar{\Omega}\}>-\infty  $) and continuous on $ [0,L)\times\bar{\Omega}. $ Then, for each $ i\in\{1,2,\dots,m\},  $ the following functions 
			$$\hat{V}^1_i(y,x):=\begin{cases}
				V_i(y,x) & \ \ (y,x)\in[0,L)\times\bar{\Omega}\cr
				M & y=L,\ x\in\bar{\Omega} 
			\end{cases}
		\ \text{and} \ \hat{V}^2_i(y,x):=\begin{cases}
			V_i(y,x) &\quad (y,x)\in[0,L)\times\bar{\Omega}\\
			m & y=L,\ x\in\bar{\Omega} \\
		\end{cases}$$

where $ M:=\max_{i\in\{1,2,\dots,m\}}\sup\{V_i(y,x)\ :\ (y,x)\in [0,L]\times\bar{\Omega}\}$ and $ m:=\min_{i\in\{1,2,\dots,m\}}\inf\{V_i(y,x)\ :\ (y,x)\in [0,L]\times\bar{\Omega}\} $, are upper semicontinuous and lower semicontinuous on $[0,L]\times\bar{\Omega}$ respectively.
} appropriate modifications  $ \hat{h}^j:[0,L]\times\bar{\Omega}\to\R^m, j=1,2 $ and $ \hat{w}^j:[0,L]\times\bar{\Omega}\to\R^m, j=1,2 $ of $h$  and $ w $ respectively, such that 
	\begin{gather}\label{visco_modi}
		\hat{h}^j=h,\ \text{and}\
		\hat{w}^j=w,\   \text{on}\  [0,L)\times\bar{\Omega},\ j=1,2
	\end{gather}
	satisfying also that $ \hat{h}^{1},\ \hat{w}^{1} $ are upper semicontinuous functions on $ [0,L]\times\bar{\Omega} $ and $ \hat{h}^{2},\ \hat{w}^{2}  $ are lower semicontinuous on $ [0,L]\times\bar{\Omega} $. From the definitions of these modifications, it is extracted that $ \hat{h}^1,\ \hat{w}^1 $ are viscosity subsolutions of $ (BVP) $ and $ \hat{h}^2,\ \hat{w}^2 $ are viscosity supersolutions of $ (BVP) $. Using now, the Comparison Principle for the following couples of viscosity subsolutions and viscosity supersolutions of $ (BVP), $ in specific, $ \left(\hat{h}^1, \hat{w}^2\right)  $ and $ \left( \hat{w}^1,\hat{h}^2\right)  $, we deduce that
	\begin{gather}
		\hat{h}^1\leq\hat{w}^2,\ \text{and}\
		\hat{w}^1\leq\hat{h}^2,\ \text{on}\ [0,L)\times\bar{\Omega}	\nonumber
	\end{gather}
	  and combining with $ (\ref{visco_modi}) $, we obtain that $ h=w $ on $ [0,L)\times\bar{\Omega}. $
\end{proof}


\subsection*{Existence of Viscosity Solution }
For the rest of the section, we assume that the assumptions $ (F1)-(F3) $ and $ (O_1)-(O_5) $ hold. Furthermore, we assume the following axiom for the operator $ F $ and an axiom that controls further the relation between the obstacles functions $ c_{i,j} $, the initial data $ g_i $ and the boundary data $ f_{i} $ as well:
\begin{gather}
	(F4)\ \text{for all\ } (y,x,r,p,X)\in [0,L]\times\R^n\times\R\times\R^n\times\mathcal{S}^n,\ F(y,x,r,p,X)\geq r\nonumber\\
	\end{gather}
\begin{align}
	\left( O_6 \right)\ \min_{i,j\in\left\lbrace 1,2,\dots,m \right\rbrace }\{ c_{i,j}(y,x) \mid y\in[0,L],\ x\in\partial\Omega\}& \geq\max_{i\in\left\lbrace 1,2,\dots,m \right\rbrace } \{f_i(y,x) \mid y\in[0,L],\ x\in\partial\Omega\}-\nonumber\\
	&-\min_{i\in\left\lbrace 1,2,\dots,m \right\rbrace} \{g_i(x) \mid x\in\partial\Omega\}\nonumber
\end{align}
	\begin{gather}
			\left( O_7 \right)\ \text{for all\ } y\in(0,L),\ \text{for all\ } x,\tilde{x}\in\Omega,\ g_i(x)+c_{i,j}(y,\tilde{x})\geq 0.
	\end{gather}
\newpage
\textbf{Remark}: Assumption (F4) is needed in the proof of Claim \ref{c_2} (see equation (\ref{c_2_F_4}) therein). It seems that the necessity of this condition has to do with the fact that both quantities in the minimisation in Claim 3.12 might contain zero order terms.

\begin{theorem}[Existence of Viscosity Solution for the Problem $\left(BVP \right)$]\label{Existence Theorem}
	 The problem   $\left(BVP\right) $  has at least one  viscosity solution.
\end{theorem}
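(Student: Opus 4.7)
The plan is to apply a Perron-type method in the spirit of \cite{CIL}, adapted to the system with interconnected obstacles along the lines of \cite{LNOO4}. The uniqueness already furnished by Theorem \ref{Comparison Principle} will be combined with an explicit construction of an ordered pair of barriers (a subsolution $u^-$ and a supersolution $u^+$) and with the definition of a Perron candidate as the pointwise supremum of admissible subsolutions.

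First, I would construct the barriers. For the supersolution, thanks to axiom $(F4)$, the constant vector $u^+_i(y,x)\equiv M$ with $M$ sufficiently large satisfies $F(y,x,M,0,0)\geq M\geq 0$; the obstacle part $u^+_i-\mathcal{M}_i u^+=\min_{j\neq i} c_{i,j}(y,x)$ is non-negative on the boundary thanks to $(O_6)$ and $(O_7)$ (and globally one can rely on $(O_3)$ combined with $(O_2)$ to ensure the cyclic non-negativity needed so that a large enough constant dominates the obstacle interconnection). A similar construction, with a large negative constant corrected by suitable polynomial terms that match $g_i$ and $f_i$ on the parabolic-type boundary $(\{0\}\times\bar\Omega)\cup((0,L)\times\partial\Omega)$, produces a subsolution $u^-$. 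The compatibility axioms $(O_5)$–$(O_7)$ ensure that both barriers respect the initial and boundary data and that $u^-\leq u^+$ componentwise.

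Next I would set, for each component $i$,
\begin{equation*}
u_i(y,x):=\sup\bigl\{w_i(y,x) : w=(w_1,\dots,w_m) \text{ is a subsolution of (BVP) with } u^-\leq w\leq u^+\bigr\}.
\end{equation*}
The family is nonempty (contains $u^-$) and stable under the usual Perron operations (taking the upper semicontinuous envelope of a supremum preserves the subsolution property for systems of this type, provided one checks that the envelope of $\mathcal{M}_i u$ relates correctly to $\mathcal{M}_i u^*$; assumption $(O_4)$ -- the triangle-type inequality -- is exactly what guarantees that the max over $j\neq i$ interacts well with upper/lower semicontinuous envelopes). Let $u^*$ be the componentwise upper semicontinuous envelope of $u$ and $u_*$ its lower semicontinuous envelope. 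Standard Perron arguments show $u^*$ is a viscosity subsolution. The main technical point is to show that $u_*$ is a supersolution: if at some interior point $(y_0,x_0)$ and some index $i$ the supersolution property failed, one would construct a localized bump $u+\epsilon\zeta$ (nonzero only near $(y_0,x_0)$ and on the $i$-th component) contradicting the maximality of $u$. Here $(O_4)$ again plays the role of letting the bump on the single component not destroy the obstacle inequalities for the other components.

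With $u^*$ a subsolution and $u_*$ a supersolution, both sandwiched between $u^-$ and $u^+$, Theorem \ref{Comparison Principle} yields $u^*\leq u_*$ on $[0,L)\times\bar\Omega$; since the reverse inequality is automatic, $u:=u^*=u_*$ is continuous on $[0,L)\times\bar\Omega$, bounded on $[0,L]\times\bar\Omega$, and satisfies conditions $(ii)$ and $(iii)$ of the viscosity solution definition. The main obstacle I anticipate is the supersolution bump step referenced in the remark following $(O_7)$: the zero-order contributions in both the equation $F$ and the obstacle part $u_i-\mathcal{M}_i u$ force us to use $(F4)$ exactly as in the bound $F(y,x,r,p,X)\geq r$ so that the bump $u_i+\epsilon\zeta$ remains a subsolution of the equation while $(O_6)$–$(O_7)$ guarantee it still lies above the interconnected obstacle when $\epsilon$ is small. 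Once this step is carried out, the proof closes by combining the squeeze $u^*\leq u_*$ with the barriers to verify the initial/boundary data in the viscosity sense.
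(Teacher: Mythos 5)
Your overall strategy matches the paper's: Perron's method combined with barriers and the comparison principle, with $(F4)$ handling the zero-order contribution of $F$ and $(O_4)$ controlling the interaction between the bump and the interconnected obstacle. However, the barrier construction you sketch has a genuine gap, and that construction is precisely the technical heart of the paper's existence proof (Proposition~\ref{existence of barriers}).

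Concretely, the constant vector $u^+_i\equiv M$ is not a supersolution. You correctly compute $u^+_i-\mathcal{M}_i u^+=\min_{j\neq i}c_{i,j}(y,x)$, but the hypotheses do not make this quantity non-negative: $(O_3)$ only forces cyclic \emph{sums} like $c_{i,j}+c_{j,i}$ to be positive, and $(O_2)$, $(O_4)$, $(O_6)$, $(O_7)$ likewise do not rule out individual $c_{i,j}<0$ in the interior. To make the obstacle inequality go through the supersolution must carry a $+c_{i,j}(y,x)$ offset on the $j$-th component, as in the paper's $V_j^{i,\hat{x},\epsilon}=g_i(\hat{x})+\cdots+c_{i,j}(y,x)$, so that $V_j-\mathcal{M}_j V=c_{i,j}-\max_{\lambda\neq j}(c_{i,\lambda}-c_{j,\lambda})$ and the triangle inequality $(O_4)$ gives non-negativity. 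Your ``large enough constant dominates the obstacle interconnection'' claim does not hold because increasing $M$ leaves $\min_{j\neq i}c_{i,j}$ unchanged.

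There is a second, structural gap. A single ordered pair $u^-\leq u^+$ is not enough to show that the Perron candidate attains the initial data $g_i$ (and the lateral data $f_i$) in the pointwise sense required by part $(iii)$ of the viscosity solution definition. The paper instead constructs a \emph{family} of continuous sub- and supersolutions $\{U^{\hat{x},\epsilon}\}$, $\{V^{i,\hat{x},\epsilon}\}$, indexed by an arbitrary point $\hat{x}\in\bar\Omega$ and $\epsilon>0$, that are tight at $(0,\hat{x})$ in the sense $\sup_{\epsilon>0}U_i^{\hat{x},\epsilon}(0,\hat{x})=g_i(\hat{x})=\inf_{\epsilon>0}V_i^{i,\hat{x},\epsilon}(0,\hat{x})$; Proposition~\ref{Perronmethod} is then formulated precisely around this pointwise tightness. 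Without it you can conclude $w_i^*(0,x)\leq \sup_\epsilon V_i^{i,x,\epsilon}(0,x)$ and $w_{*,i}(0,x)\geq\inf_\epsilon U_i^{x,\epsilon}(0,x)$, but a fixed sandwich $u^-\leq w\leq u^+$ gives only crude bounds that need not collapse to $g_i(x)$. Your remark about ``correcting by polynomial terms that match $g_i$ and $f_i$'' is pointing in the right direction, but the actual construction of those correction terms (the $\phi$-dependent weight, the $\exp(\kappa\phi(x))\abs{x-\hat{x}}^2$ factor, the $\hat{x}$-dependence, and the separate tuning of $B$, $\kappa$, $C$ against $(O_5)$--$(O_7)$) is where the real work lies and is not supplied by the proposal.
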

In order to prove the above theorem, it is sufficent to prove the following propositions:
\begin{prop}\label{Perronmethod}
	Assume that for each $ i\in\left\lbrace 1,2,\dots,m\right\rbrace  $ and $ \hat{x}\in\bar{\Omega} $ there exists a family of continuous viscosity sub- and supersolutions, $ \left\lbrace u^{i,\hat{x},\epsilon}\right\rbrace_{\epsilon>0}  $ and $ \left\lbrace v^{i,\hat{x},\epsilon}\right\rbrace_{\epsilon>0}  $ respectively to (BVP) such that
	$$\sup_{\epsilon>0}u_i^{\hat{x},\epsilon}(0,\hat{x})=g_i(\hat{x})=\inf_{\epsilon>0}v_i^{i,\hat{x},\epsilon}(0,\hat{x})$$
	then there exists a a viscosity solution of (BVP).
\end{prop}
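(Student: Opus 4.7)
\medskip

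\textbf{Proof plan.} The approach is to adapt Perron's method to the system setting with interconnected obstacles. The first step is to assemble from the prescribed families $\{u^{i,\hat{x},\epsilon}\}$ and $\{v^{i,\hat{x},\epsilon}\}$ a global bounded subsolution $\underline{u}$ and a global supersolution $\bar{v}$ of (BVP). The standard recipe is to define, for each component, $\underline{u}_i(y,x) := \sup_{\hat{x},\epsilon} u_i^{\hat{x},\epsilon}(y,x)$ and $\bar{v}_i(y,x) := \inf_{\hat{x},\epsilon} v_i^{\hat{x},\epsilon}(y,x)$ (then taking upper/lower semicontinuous envelopes), since the stability of viscosity subsolutions under suprema and of supersolutions under infima is standard. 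The hypothesis $\sup_\epsilon u_i^{\hat{x},\epsilon}(0,\hat{x}) = g_i(\hat{x}) = \inf_\epsilon v_i^{\hat{x},\epsilon}(0,\hat{x})$ is exactly what is needed to force the initial condition at $y=0$ for these barriers; continuity of $g_i$, $f_i$ and of the envelopes then extends matching at the lateral and initial boundaries.

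Next I would define the candidate solution as the Perron envelope
$$w_i(y,x) := \sup\bigl\{ z_i(y,x) \, : \, z=(z_1,\dots,z_m) \text{ is a subsolution of (BVP) with } \underline{u} \leq z \leq \bar{v} \bigr\},$$
and verify that its upper semicontinuous envelope $w^{*}$ is a viscosity subsolution while its lower semicontinuous envelope $w_{*}$ is a viscosity supersolution. For $w^{*}$, the standard semijet-convergence argument applies: given $(\alpha,p,X) \in \bar{J}^{2,+}w_i^{*}(y_0,x_0)$, one produces a sequence of admissible subsolutions whose jets approximate $(\alpha,p,X)$, and the inequality $\min\{F(\cdot), u_i - \mathcal{M}_i u\} \leq 0$ passes to the limit by continuity of $F$ and upper semicontinuity of the $\max$ of upper semicontinuous functions. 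For $w_{*}$, one uses the classical bump argument by contradiction: if $w_{*}$ fails the supersolution inequality at some point, a suitable $C^2$ perturbation of $w$ yields a strictly larger admissible subsolution, violating maximality in the definition of $w$.

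Finally, the Comparison Principle (Theorem~\ref{Comparison Principle}) applied to the pair $(w^{*}, w_{*})$ (which, by the sandwich $\underline{u} \leq w \leq \bar{v}$ and the barrier property, satisfy the correct boundary and initial inequalities in the sense required by the sub/supersolution definitions) yields $w^{*} \leq w_{*}$ on $[0,L)\times \bar{\Omega}$. Combined with the trivial reverse inequality $w_{*} \leq w^{*}$, this forces $w^{*} = w_{*} = w$, so $w$ is continuous on $[0,L)\times\bar{\Omega}$ and satisfies all the requirements of a viscosity solution in the sense of Definition~$(\gamma)$.

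The main obstacle will be the bump construction in the supersolution step, since the $i$-th component obstacle $\mathcal{M}_i w = \max_{j\neq i}(w_j - c_{ij})$ couples all coordinates. Perturbing only $w_i$ upward by a test function $\varepsilon \varphi$ can violate the obstacle inequality $w_k - c_{ki} \leq w_k^{*}$ for some index $k\neq i$ at nearby points, so one must argue either that the bump can be localized where the $i$-th constraint is strictly inactive (so $F \leq 0$ is the active inequality and the perturbation only affects $F$), or perform a consistent simultaneous modification of all components, using the triangle-type axiom $(O_4)$ together with $(O_3)$ to guarantee that the new vector remains a subsolution. This is the analogue of the delicate coupling issue handled in Claim~\ref{claim_6} for uniqueness, and requires analogous care here.
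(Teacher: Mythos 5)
Your overall architecture coincides with the paper's: a Perron envelope $w_i=\sup\{u_i : u\ \text{subsolution in an admissible, uniformly bounded class}\}$, the semicontinuous envelopes $w_i^{*}$ and $w_{*,i}$, a semijet--convergence argument (Proposition 4.3 of \cite{CIL}) to show $w^{*}$ is a subsolution, a bump argument by contradiction to show $w_{*}$ is a supersolution, and finally Theorem~\ref{Comparison Principle} applied to the pair $(w^{*},w_{*})$ to force $w^{*}=w_{*}=w$ on $[0,L)\times\bar\Omega$. The hypothesis $\sup_{\epsilon}U_i^{\hat x,\epsilon}(0,\hat x)=g_i(\hat x)=\inf_{\epsilon}V_i^{i,\hat x,\epsilon}(0,\hat x)$ is used exactly as you indicate, to pin down the initial condition for both envelopes. (A cosmetic difference: the paper does not assemble global barriers by taking $\sup/\inf$ over $\hat x$ and $\epsilon$; it bounds the admissible class by a single constant $M_{\epsilon_o}$ coming from one supersolution and uses the individual barriers only to show the class is nonempty and that $w_i^{*}(0,\hat x)\le\inf_{\epsilon}V_i^{i,\hat x,\epsilon}(0,\hat x)$.)

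The step you leave open is, however, precisely where the paper's work is concentrated, and your diagnosis of the obstacle coupling there is partly backwards. Raising only the $i_0$-th component cannot damage the subsolution property of the \emph{other} components: for $j\neq i_0$ the obstacle $\mathcal{M}_j\hat u=\max_{\lambda\neq j}(\hat u_\lambda-c_{j\lambda})$ only increases when $\hat u_{i_0}$ is replaced by something larger, so $\hat u_j-\mathcal{M}_j\hat u\le w_j^{*}-\mathcal{M}_jw^{*}$ and the inequality $\min\{F,\,\cdot\,\}\le 0$ for $j\neq i_0$ is inherited for free. The genuine difficulty sits in the $i_0$-th component itself: the failure of the supersolution property gives a strict inequality involving $w_{*,i_0}-\mathcal{M}_{i_0}w_{*}$, which after perturbation controls $\tilde w_{i_0}-\mathcal{M}_{i_0}\tilde w^{\delta}$, whereas what one must verify for the competitor $\hat u$ is $\min\{F,\,\tilde w_{i_0}-\mathcal{M}_{i_0}w^{*}\}\le 0$, with the obstacle built from the \emph{unperturbed} upper envelope. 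Neither of your two proposed fixes is what is needed (one cannot assume the constraint is inactive, since the minimum in the negation may be attained by the obstacle term, and $(O_3)$--$(O_4)$ play no role at this point). The paper instead proves $\mathcal{M}_{i_0}\tilde w^{\delta}\le\mathcal{M}_{i_0}w^{*}$ on a small cylinder $Q_R$: by lower semicontinuity of each $w_{*,\lambda}$ at $(\hat y,\hat x)$ one has $w_{*,\lambda}(\hat y,\hat x)\le w_\lambda^{*}(y,x)+\epsilon^{*}$ for $(y,x)$ near $(\hat y,\hat x)$, and the bump $h\beta(\delta)$, the linear-quadratic jet, and the radius $R$ are then tuned so that their total contribution is at most $-\epsilon^{*}$ while $h(\hat y,\hat x,\delta^{*})\beta(\delta^{*})>0$ (the latter strict positivity being what ultimately produces a subsolution exceeding $w$ and hence the contradiction). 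Without this estimate your bump argument does not close, so this piece must be supplied before the plan constitutes a proof.
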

With this result given, what remains is to construct appropriate barriers, i.e families of viscosity sub- and supersolutions. More specifically, we prove the following:
\begin{prop}\label{existence of barriers}
	Let $ A\in\R $, $ i\in\left\lbrace 1,2,\dots,m \right\rbrace,\ \hat{x}\in\bar{\Omega}  $  and $\phi$ be a given, nonnegative, continuous function $ \phi:\bar{\Omega}\rightarrow\R, $ on $ \bar{\Omega}, $ with the identity: $ \text{there exists}\ \delta_{\hat{x}}>0,\ \text{such that, for all\ } x\in\Omega_{\delta_{\hat{x}}}:= B_{\rho_2}\left( \hat{x},\delta_{\hat{x}}\right)\cap\Omega, \\ \phi(x)\geq\phi(\hat{x}) $. Then, there exists $ \kappa>0 $ and appropriate numbers $ B\in\R^+, C\in\R $ such that,  the functions $ U^{\hat{x},\epsilon}:=\left( U_1^{\hat{x},\epsilon},U_2^{\hat{x},\epsilon},\dots,U_m^{\hat{x},\epsilon} \right)  $ and  $\  V^{i,\hat{x},\epsilon}:=\left( V_1^{i,\hat{x},\epsilon},V_2^{i,\hat{x},\epsilon},\dots,V_m^{i,\hat{x},\epsilon} \right)  $, with
	\begin{gather}
		U_j^{\hat{x},\epsilon}(y,x):=g_j(\hat{x})-A(\phi(x)-\phi(\hat{x}))-B\exp(\kappa \phi(x))\abs{x-\hat{x}}^2-\epsilon-Cy\nonumber\\
		V_j^{i,\hat{x},\epsilon}(y,x):=g_i(\hat{x})+A(\phi(x)-\phi(\hat{x}))+B\exp(\kappa \phi(x))\abs{x-\hat{x}}^2+\epsilon+Cy+ c_{i,j}(y,x)\nonumber
	\end{gather}
	are viscosity sub- and supersolutions of the (BVP) respectively. Moreover,
	$$\sup_{\epsilon>0}U_i^{\hat{x},\epsilon}(0,\hat{x})=g_i(\hat{x})=\inf_{\epsilon>0}V_i^{i,\hat{x},\epsilon}(0,\hat{x}).$$
\end{prop}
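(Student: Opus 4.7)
The matching claim at $(0,\hat{x})$ is immediate from the definitions: $U_i^{\hat{x},\epsilon}(0,\hat{x})=g_i(\hat{x})-\epsilon$ (since $\phi(\hat{x})-\phi(\hat{x})=0$ and $|\hat{x}-\hat{x}|^{2}=0$), and $V_i^{i,\hat{x},\epsilon}(0,\hat{x})=g_i(\hat{x})+\epsilon$ using additionally axiom $(O_2)$ that $c_{i,i}\equiv 0$; so $\sup_{\epsilon>0}U_i^{\hat{x},\epsilon}(0,\hat{x})=g_i(\hat{x})=\inf_{\epsilon>0}V_i^{i,\hat{x},\epsilon}(0,\hat{x})$.

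For the supersolution property of $V^{i,\hat{x},\epsilon}$, the obstacle inequality is hard-wired into the construction: a direct computation gives $V_j^{i,\hat{x},\epsilon}-V_k^{i,\hat{x},\epsilon}=c_{i,j}(y,x)-c_{i,k}(y,x)$, and axiom $(O_4)$ then yields $V_j^{i,\hat{x},\epsilon}-\bigl(V_k^{i,\hat{x},\epsilon}-c_{j,k}(y,x)\bigr)=c_{i,j}-c_{i,k}+c_{j,k}\geq 0$, hence $V_j^{i,\hat{x},\epsilon}\geq \mathcal{M}_j V^{i,\hat{x},\epsilon}$ pointwise. For the PDE part, axiom $(F_4)$ reduces the task to showing $V_j^{i,\hat{x},\epsilon}\geq 0$, which follows from $(O_7)$ combined with the positivity of $\epsilon$, of $B\exp(\kappa\phi(x))|x-\hat{x}|^{2}$, of $Cy$ for $C>0$, and with $B$ chosen large enough to dominate any negative contribution from $A(\phi(x)-\phi(\hat{x}))$ when $A<0$. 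The initial inequality $V_j^{i,\hat{x},\epsilon}(0,x)\geq g_j(x)$ reduces via $(O_5)$ to $g_i(\hat{x})+A(\phi(x)-\phi(\hat{x}))+B\exp(\kappa\phi(x))|x-\hat{x}|^{2}+\epsilon\geq g_i(x)$, which holds by continuity of $g_i$ (with the $\epsilon$-cushion locally and the $B$-term globally); the lateral condition on $(0,L)\times\partial\Omega$ then follows from $(O_6)$ and a sufficiently large $C$.

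The subsolution property of $U^{\hat{x},\epsilon}$ is more delicate. Writing $U_j^{\hat{x},\epsilon}(y,x)=g_j(\hat{x})+\psi(y,x)$ with $\psi$ independent of $j$, the obstacle becomes $U_i^{\hat{x},\epsilon}-\mathcal{M}_i U^{\hat{x},\epsilon}=\min_{j\neq i}\bigl[g_i(\hat{x})-g_j(\hat{x})+c_{i,j}(y,x)\bigr]$, a quantity driven entirely by the data at $\hat{x}$ and by $c_{i,j}$. The initial and lateral inequalities $U_j^{\hat{x},\epsilon}(0,x)\leq g_j(x)$ and $U_j^{\hat{x},\epsilon}(y,x)\leq f_j(y,x)$ on $\partial\Omega$ follow by the same splitting as for $V$: continuity of $g_j,f_j$ near $\hat{x}$ (using the $-\epsilon$ cushion) combined with the arbitrarily negative margin $-B\exp(\kappa\phi(x))|x-\hat{x}|^{2}\leq -Be^{\kappa\phi(\hat{x})}\delta_{\hat{x}}^{2}$ available outside $\Omega_{\delta_{\hat{x}}}$ once $B$ is large. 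For the viscosity PDE on $(0,L)\times\Omega$ I would test with the classical derivatives of $\psi$ (invoking an approximation on $\phi$ and the fact that $F$ is non-increasing in the Hessian variable to reach arbitrary $(\alpha,p,X)\in\bar{J}^{2,+}U_j^{\hat{x},\epsilon}$) and split into two regions: where $U_j^{\hat{x},\epsilon}(y,x)<0$, axiom $(F_1)$ gives $F(y,x,U_j^{\hat{x},\epsilon},p,X)\leq F(y,x,0,p,X)+\gamma U_j^{\hat{x},\epsilon}$, and since $(p,X)$ stays in a fixed bounded set on $\bar{\Omega}$ (with bounds depending on $A,B,\kappa$) taking $C$ large makes this drive $F\leq 0$ wherever $y$ is bounded away from $0$; elsewhere the obstacle branch has to take over. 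The parameter $\kappa$ is tuned via $(F_2)$--$(F_3)$ in the classical exponential-barrier style so that the second-derivative contribution of the $B$-term is kept under control.

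The step I expect to be the main obstacle is precisely closing the argument in a small neighbourhood of $(0,\hat{x})$ where $U_j^{\hat{x},\epsilon}\approx g_j(\hat{x})-\epsilon$ may be positive. There $(F_4)$ forces $F(y,x,U_j^{\hat{x},\epsilon},\cdot,\cdot)\geq U_j^{\hat{x},\epsilon}\geq 0$, so the $F$-branch of the minimum is unavailable and one must exhibit an index $j\neq i$ with $g_i(\hat{x})-g_j(\hat{x})+c_{i,j}(y,x)\leq 0$ in that neighbourhood. Because $(O_5)$ only guarantees that this minimum is $\geq 0$ at $(0,\hat{x})$, the obstacle branch is tight, and the simultaneous choice of $A,B,C,\kappa$ that ensures the two branches jointly cover all of $(0,L)\times\Omega$ — while still respecting the initial and lateral data — is the delicate matching, in line with the remark preceding this proposition about $(F_4)$ and zero order terms.
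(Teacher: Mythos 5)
Your treatment of the matching condition at $(0,\hat{x})$ and of the supersolution $V^{i,\hat{x},\epsilon}$ follows essentially the same route as the paper: the obstacle inequality reduces via the telescoping $V_j-V_k=c_{i,j}-c_{i,k}$ to axiom $(O_4)$; the PDE branch reduces via $(F_4)$ to the nonnegativity of $V_j$, obtained from $(O_7)$ together with a large $B$ chosen by a compactness argument away from $\hat{x}$; and the initial inequality $V_j(0,\cdot)\geq g_j$ comes from $(O_5)$ at $\hat{x}$ plus the same large-$B$ compactness argument. One correction: for the lateral condition $V_j\geq f_j$ on $(0,L)\times\partial\Omega$ you invoke ``a sufficiently large $C$'', but $Cy$ vanishes as $y\to 0^+$ on the lateral boundary, so $C$ cannot supply the needed margin there; the paper (Claim 3.12) instead fixes the deficit by taking $\kappa$ large, so that $B\,d^2\exp(\kappa\phi_{\min})$ with $d=\mathrm{dist}(\hat{x},\partial\Omega)>0$ dominates the possibly negative minimum of $A(\phi(x)-\phi(\hat{x}))$, and it explicitly remarks that the choice of $C$ plays no role in that step.

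The genuine gap is the subsolution property of $U^{\hat{x},\epsilon}$, which you identify but do not close. Your own analysis shows why it is not a routine mirror image of the supersolution case: near $(0,\hat{x})$ one has $U_j\approx g_j(\hat{x})-\epsilon$, which may be positive, in which case $(F_4)$ forces $F(y,x,U_j,p,X)\geq U_j>0$ and the $F$-branch of the minimum cannot be $\leq 0$; simultaneously the obstacle branch equals $\min_{k\neq j}\bigl[g_j(\hat{x})-g_k(\hat{x})+c_{j,k}(y,x)\bigr]$, which $(O_5)$ makes nonnegative at $(0,\hat{x})$ and which can be strictly positive (e.g.\ constant data with $c_{j,k}$ bounded below by a positive constant). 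You state that ``the simultaneous choice of $A,B,C,\kappa$ \ldots is the delicate matching'' without exhibiting such a choice, so the subsolution half of the proposition is asserted rather than proved. You should be aware that the paper is no better here: after the three claims for $V$ it simply writes that the subsolution case follows ``by an analogous procedure'', which does not address the asymmetry you correctly isolated (the supersolution argument exploits $(F_4)$ to make $F\geq 0$, whereas the subsolution argument needs $F\leq 0$ or an available obstacle branch, and neither is automatic near $(0,\hat{x})$). As a referee's verdict on your proposal, however, the conclusion is that the subsolution part is a missing step, not a completed proof.
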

\textbf{Remark}: The barrier functions mentioned above, are structurally identical to those mentioned in the paper \cite{LNOO4}. But there is an essential difference between our paper and \cite{LNOO4} regarding the property we require the function $ \phi $ to satisfy. Also in \cite{LNOO4} the entire proof process aims to show that the specific functions satisfy the definition of sub-supersolutions of (BVP), where in this definition of solutions there is a form of Neumann condition, in contrast to our paper which does not exist this condition. As a consequence of the latter, it follows that the techniques developed in our paper for determining the constants included in the definition of the barrier functions, are also different.\\

Combining Propositions \ref{Perronmethod} and \ref{existence of barriers} above, we extract the existence part of Theorem \ref{Existence Theorem}. We start with the proof of Proposition $ \ref{existence of barriers} $
\begin{proof}(\textbf{of Proposition $ \ref{existence of barriers} $})
	At first, we consider a random $ i\in\left\lbrace 1,2,\cdots,m \right\rbrace  $ and we prove that $ V^{i,\hat{x},\epsilon} $ is viscosity supersolution of (BVP). Let $ j\in\left\lbrace 1,2,\dots,m\right\rbrace  $ be a random index. We observe that the function $ V_j^{i,\hat{x},\epsilon}:[0,L]\times\bar{\Omega}\to\R  $ as continuous, will be lower semicontinuous on $ [0,L]\times\bar{\Omega}. $ The aim is to prove that function  $ V_j^{i,\hat{x},\epsilon} $ satisfies the definition of viscosity supersolution.\\

	The proof consists of the following claims:
	\begin{claim}\label{c_1}
		For a given $ A\in\R $ and any $ \kappa>0, $ there exists an appropriate number $ \tilde{B}\in\R^{+} $  such that\\ $ \text{for all\ } j\in\left\lbrace 1,2,\dots,m\right\rbrace  $,
		\begin{gather}
			V_j^{i,\hat{x},\epsilon}(0,x)> g_j(x),\ \text{for all\ } x\in\bar{\Omega}\nonumber\\
			g_i(\hat{x})=\inf_{\epsilon>0}V_i^{i,\hat{x},\epsilon}(0,\hat{x})\nonumber.
		\end{gather}
	\end{claim}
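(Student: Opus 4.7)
The plan is in two stages. First, the identity $g_i(\hat x) = \inf_{\epsilon>0} V_i^{i,\hat x,\epsilon}(0,\hat x)$ is direct: evaluating at $(y,x)=(0,\hat x)$ kills the summands $A(\phi(\hat x)-\phi(\hat x))$, $\tilde B\exp(\kappa\phi(\hat x))\abs{\hat x-\hat x}^2$ and $Cy$, while axiom $(O_2)$ gives $c_{i,i}(0,\hat x)=0$; hence $V_i^{i,\hat x,\epsilon}(0,\hat x)=g_i(\hat x)+\epsilon$, whose infimum over $\epsilon>0$ is $g_i(\hat x)$.

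For the strict inequality, fix $j\in\{1,\dots,m\}$ and write
\[ H_j(x) \;:=\; V_j^{i,\hat x,\epsilon}(0,x) - g_j(x) \;=\; \Psi_j(x) + \tilde B\,\exp(\kappa\phi(x))\,\abs{x-\hat x}^2, \]
where $\Psi_j(x) := g_i(\hat x)-g_j(x)+A(\phi(x)-\phi(\hat x))+c_{i,j}(0,x)+\epsilon$ is continuous on $\bar\Omega$ and does not involve $\tilde B$. At $x=\hat x$ the barrier summand vanishes and the compatibility condition $(O_5)$ gives $\Psi_j(\hat x)=g_i(\hat x)-g_j(\hat x)+c_{i,j}(0,\hat x)+\epsilon \geq \epsilon>0$. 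By continuity there is a radius $r_j>0$ such that $\Psi_j\geq\epsilon/2$ on $\{x\in\bar\Omega:\abs{x-\hat x}<r_j\}$, so $H_j \geq \epsilon/2>0$ there for any $\tilde B\geq 0$, since the barrier summand is nonnegative.

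On the complement $K_j:=\{x\in\bar\Omega:\abs{x-\hat x}\geq r_j\}$ we have $\abs{x-\hat x}^2\geq r_j^2$ and $\exp(\kappa\phi(x))\geq 1$ (using $\phi\geq 0$), so the barrier summand is at least $\tilde B r_j^2$, while $\Psi_j$ is bounded below on the compact set $\bar\Omega$ by some $-L_j$. Any $\tilde B$ with $\tilde B r_j^2>L_j$ then forces $H_j>0$ on $K_j$, and combined with the neighbourhood estimate this yields $H_j>0$ throughout $\bar\Omega$. Since $j$ ranges over only finitely many indices, $\tilde B := 1+\max_{1\leq j\leq m} L_j/r_j^2$ serves every $j$ simultaneously. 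The whole argument rests on nothing beyond $(O_2)$, $(O_5)$, continuity of the data, and compactness of $\bar\Omega$; the only mildly delicate point is bookkeeping, namely that $r_j$ and $L_j$ inherit a dependence on $\epsilon$ through $\Psi_j$, which is harmless here since the claim fixes $\epsilon>0$ and only demands existence of some $\tilde B$.
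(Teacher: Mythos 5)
Your proof is correct and follows the paper's argument essentially verbatim: isolate the $\tilde B$-free summand, verify positivity at $\hat x$ from $(O_2)$ and $(O_5)$, propagate it to a neighbourhood by continuity, and on the compact complement dominate the worst case by taking $\tilde B$ large, exactly as in the paper's decomposition $V_j^{i,\hat x,\epsilon}(0,\cdot)-g_j = k_1^{i,j}+\tilde B\,k_2 + \epsilon$. The only divergence is cosmetic: you absorb $\epsilon$ into $\Psi_j$, so your $r_j$, $L_j$, and hence $\tilde B$ may depend on $\epsilon$, whereas the paper keeps $\epsilon$ outside the part it minimises and so obtains a $\tilde B$ uniform over $\epsilon>0$ — immaterial for the present claim (and the infimum identity $V_i^{i,\hat x,\epsilon}(0,\hat x)=g_i(\hat x)+\epsilon$ ignores $\tilde B$ entirely), though the uniform choice is marginally tidier for the overall barrier construction in Proposition~\ref{existence of barriers}.
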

	\begin{proof}
		First, we observe that $ V_j^{i,\hat{x},\epsilon}(0,\hat{x})\ge g_j(\hat{x}). $ Indeed, $ \text{for all\ }\epsilon>0 $ we have that
		\begin{gather}
			V_j^{i,\hat{x},\epsilon}(0,\hat{x})=g_i(\hat{x})+c_{i,j}(0,\hat{x})+\epsilon\nonumber
		\end{gather}
		but by the use of axiom $ (O_5) $ we get that $ g_i(\hat{x})+c_{i,j}(0,\hat{x})\ge g_j(\hat{x}),\ \text{for all\ } i,j\in\left\lbrace 1,2,\dots,m \right\rbrace. $ Consequently,
		\begin{gather}
			V_j^{i,\hat{x},\epsilon}(0,\hat{x})\geq g_j(\hat{x})+\epsilon>g_j(\hat{x}),\ \text{for all\ }\epsilon>0\nonumber.
		\end{gather}
		As a result from the above we receive that $ V_j^{i,\hat{x},\epsilon}(0,\hat{x})> g_j(\hat{x}). $ Next, we proceed to the proof of non-negativity of $ V_j^{i,\hat{x},\epsilon}(0,\cdot)-g_j:\bar{\Omega}\to\R $. 
		From the last inequality and from the fact that, $ V_j^{i,\hat{x},\epsilon}(0,\cdot)-g_j$ is continuous on $\bar{\Omega} $ and especially at the point $ \hat{x}, $ we can select an appropriate radious $ r_{\hat{x}} $ such that $ \text{for all\ } x\in B(\hat{x}, r_{\hat{x}}) \cap \bar{\Omega},\ V_j^{i,\hat{x},\epsilon}(0,x)-g_j(x)>0. $ It remains to show that\\ $ \text{for all\ } x\in\bar{\Omega}\setminus B(\hat{x}, r_{\hat{x}}),\ V_j^{i,\hat{x},\epsilon}(0,x)-g_j(x)>0.  $ Clearly we see that the set $ \bar{\Omega}\setminus B(\hat{x}, r_{\hat{x}}) $ is closed in $ \R^n,  $ since $ \left( \bar{\Omega}\setminus B(\hat{x}, r_{\hat{x}})\right)^{c}=\bar{\Omega}^{c}\cup B(\hat{x}, r_{\hat{x}})   $  is open in $ \R^n. $ Moreover, the set $ \bar{\Omega}\setminus B(\hat{x}, r_{\hat{x}}) $ is bounded as a subset of the bounded set $ \bar{\Omega}. $ Consequently, $ \bar{\Omega}\setminus B(\hat{x}, r_{\hat{x}}) $ is a compact set, as closed and bounded set in $ \R^n. $ Let $ x\in\bar{\Omega}\setminus B(\hat{x}, r_{\hat{x}}) $, be a random point. We observe that
		\begin{align}
			V_j^{i,\hat{x},\epsilon}(0,x)-g_j(x)&=g_i(\hat{x})+A(\phi(x)-\phi(\hat{x}))+\tilde{B}\exp(\kappa \phi(x))\abs{x-\hat{x}}^2+\epsilon+c_{i,j}(0,x)-g_j(x)\nonumber\\
			&=K_{i,j}(x) +\epsilon
		\end{align}
		where $K_{i,j}  $ is the following function
		\begin{gather}
			K_{i,j}:\bar{\Omega}\setminus B(\hat{x}, r_{\hat{x}})\to\R,\ K_{i,j}(x):=k_1^{i,j}(x)+\tilde{B}\ k_2(x),\ \text{for all\ } x\in\bar{\Omega}\nonumber
		\end{gather}
		with 
		\begin{align}
			k_1^{i,j}(x)&:=g_i(\hat{x})-g_j(x)+c_{i,j}(0,x)+A(\phi(x)-\phi(\hat{x})),\ \text{for all\ } x\in\bar{\Omega}\setminus B(\hat{x}, r_{\hat{x}}) \nonumber\\
			k_2(x)&:=\exp(\kappa \phi(x))\abs{x-\hat{x}}^2,\ \text{for all\ } x\in\bar{\Omega}\setminus B(\hat{x}, r_{\hat{x}})\nonumber.
		\end{align}
		For our task, i.e $ \text{for all\ } x\in\bar{\Omega}\setminus B(\hat{x}, r_{\hat{x}}),\ V_j^{i,\hat{x},\epsilon}(0,x)-g_j(x)>0, $ it is sufficent to show that \\$ K_{i,j}\geq 0,\ \text{for all\ } x\in \bar{\Omega}\setminus B(\hat{x}, r_{\hat{x}}). $ Clearly, we see that $ k_2 $ is a non-negative function on $ \bar{\Omega}\setminus B(\hat{x}, r_{\hat{x}}). $
		The functions $ k_1^{i,j},\ k_2  $ as continuous on the compact set, attain a minimum value. In specific, let
		\begin{gather}
			k_2^{\min}:=\min\left\lbrace k_2(x)\ :\  x\in\bar{\Omega}\setminus B(\hat{x}, r_{\hat{x}})\right\rbrace\ \text{and}\ k_1^{\min}:=\min_{i,j\in\left\lbrace 1,2,\dots,m\right\rbrace }\left\lbrace \min\left\lbrace k_1^{i,j}(x)\ :\  x\in\bar{\Omega}\setminus B(\hat{x}, r_{\hat{x}})\right\rbrace\right\rbrace \nonumber.
		\end{gather} 
		It is obvious that $ k_2^{\min}>0, $ since $ \hat{x}\notin \bar{\Omega}\setminus B(\hat{x}, r_{\hat{x}})  $ and the non-negative expression $ \exp(\kappa\phi(x))\abs{x-\hat{x}}^2  $ is zero iff $ x=\hat{x} $. Then, $ \text{for all\ } i,j\in\left\lbrace 1,2,\dots,m\right\rbrace,\ \text{for all\ } x\in \bar{\Omega}\setminus B(\hat{x}, r_{\hat{x}})  $ and $ B\geq 0 $ we have 
		\begin{gather}
			\Delta_{\tilde{B}}:=k_1^{\min}+\tilde{B}\ k_2^{\min}\leq k_1^{i,j}(x)+\tilde{B}\ k_2(x)\equiv K_{i,j}(x)\nonumber.
		\end{gather}
		Consequently, in order to get the desired inequality, $ K_{i,j}(x)\geq 0,\ \text{for all\ } x\in \bar{\Omega}\setminus B(\hat{x}, r_{\hat{x}})  $, it is sufficent to show that $ \Delta_{\tilde{B}}\geq 0 $ for some appropriate $ \tilde{B}\geq 0. $ Indeed, the quantity $ \Delta_B:=k_1^{\min}+B\ k_2^{\min}\geq 0 $ is true  iff $$ \tilde{B}\geq \max\left\lbrace 0, -\frac{ k_1^{\min}}{k_2^{\min}}\right\rbrace.  $$
	\vspace{0.1em}	
	
		Finally, we proceed to the proof of $ g_i(\hat{x})=\inf_{\epsilon>0}V_i^{i,\hat{x},\epsilon}(0,\hat{x}) $. We have already seen, that \\$ V_i^{i,\hat{x},\epsilon}(0,\hat{x})=g_i(\hat{x})+\epsilon>g_i(\hat{x}),\ \text{for all\ }\epsilon>0. $ As a result from that, we receive that $ g_i(\hat{x}) $ is a lower bound of the non empty set $ \Delta:=\left\lbrace V_i^{i,\hat{x},\epsilon}(0,\hat{x})\ :\ \ \epsilon>0 \right\rbrace  $. Consequently, the $ \inf\Delta $ is well defined. We claim that $ g_i(\hat{x})=\inf\Delta.  $ By the characterization of infimum, it is sufficent to show that, 
		\begin{gather}
			\text{for all\ } \tilde{\epsilon}>0,\text{there exists\ } z_{\tilde{\epsilon}}\in\Delta: z_{\tilde{\epsilon}}<g_i(\hat{x})+\tilde{\epsilon}\nonumber.
		\end{gather}
		Indeed, considering a random $ \tilde{\epsilon}>0, $ we choose any $ \epsilon\in\left( 0,\tilde{\epsilon} \right),$ for example $ \epsilon=\tilde{\epsilon}/2 $ and set $ z_{\tilde{\epsilon}}:=g_i(\hat{x})+\tilde{\epsilon}/2\in\Delta $. Clearly, we see that $ z_{\tilde{\epsilon}}<g_i(\hat{x})+\tilde{\epsilon}$. As a result from the last, we get the desired equality.
	\end{proof}
	\begin{claim}\label{c_2}
	For a given $ A\in\R $ and any given $\kappa>0  $ there exists appropriate number $ B\geq\tilde{B}, $ such that, for all $(y,x)\in (0,L)\times\Omega,  $ and all $ (\alpha,p,X)\in \bar{J}^{2,-}V_j^{i,\hat{x},\epsilon}(y,x) $ the following holds:\\
		\begin{gather}
			\min\left\lbrace F(y,x,V_j^{i,\hat{x},\epsilon}(y,x),D_x V_j^{i,\hat{x},\epsilon}(y,x),D^2_{xx}V_j^{i,\hat{x},\epsilon}(y,x)),\ V_j^{i,\hat{x},\epsilon}(y,x)-\mathcal{M}_j V^{i,\hat{x},\epsilon}(y,x) \right\rbrace \geq 0\nonumber.
		\end{gather}
	\end{claim}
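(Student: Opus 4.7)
The plan is to verify the two arguments of the minimum separately: (a) the obstacle inequality $V_j^{i,\hat{x},\epsilon}(y,x) - \mathcal{M}_j V^{i,\hat{x},\epsilon}(y,x) \geq 0$ at every $(y,x) \in (0,L)\times\Omega$, and (b) the differential inequality $F(y,x,V_j^{i,\hat{x},\epsilon}(y,x),p,X) \geq 0$ for every $(\alpha,p,X) \in \bar{J}^{2,-} V_j^{i,\hat{x},\epsilon}(y,x)$. Once both are in place, the minimum of two nonnegative quantities is nonnegative and the claim follows.

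For (a), I would exploit the fact that the index $j$ enters $V_j^{i,\hat{x},\epsilon}$ only through the additive term $c_{i,j}(y,x)$; every other summand of the barrier is independent of $j$. Hence for any $k \neq j$ a direct computation gives
\[
V_j^{i,\hat{x},\epsilon}(y,x) - \bigl(V_k^{i,\hat{x},\epsilon}(y,x) - c_{j,k}(y,x)\bigr) = c_{i,j}(y,x) + c_{j,k}(y,x) - c_{i,k}(y,x),
\]
which is nonnegative by axiom $(O_4)$. Taking the maximum over $k \neq j$ delivers (a).

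For (b), the key observation --- and the point where $(F4)$ is invoked, as flagged in the remark --- is that $(F4)$ provides a zero-order lower bound $F(y,x,r,p,X) \geq r$ \emph{independently of $p$ and $X$}. Applied to any subjet element $(\alpha,p,X) \in \bar{J}^{2,-}V_j^{i,\hat{x},\epsilon}(y,x)$ this yields $F(y,x,V_j^{i,\hat{x},\epsilon}(y,x),p,X) \geq V_j^{i,\hat{x},\epsilon}(y,x)$, so it suffices to prove the pointwise bound $V_j^{i,\hat{x},\epsilon}(y,x) \geq 0$ on $(0,L)\times\Omega$. Regrouping the terms as
\[
V_j^{i,\hat{x},\epsilon}(y,x) = \bigl[g_i(\hat{x}) + c_{i,j}(y,x)\bigr] + A\bigl(\phi(x)-\phi(\hat{x})\bigr) + B\exp(\kappa\phi(x))\,\abs{x-\hat{x}}^2 + \epsilon + Cy,
\]
axiom $(O_7)$ makes the first bracket nonnegative, while the third, fourth and fifth summands are nonnegative as soon as $B,C\geq 0$. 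The only potentially harmful contribution is $A(\phi(x)-\phi(\hat{x}))$, which must be absorbed by a suitable choice of $B$. The strategy is to split $\bar{\Omega}$ into a ball $B(\hat{x},\rho)\cap\bar{\Omega}$ --- with $\rho>0$ chosen small enough, by continuity of $\phi$ at $\hat{x}$, so that $\abs{A}\,\abs{\phi(x)-\phi(\hat{x})} < \epsilon$ on it --- and its compact complement, on which $\abs{x-\hat{x}} \geq \rho$ and the continuous function $\abs{\phi-\phi(\hat{x})}$ attains a maximum $M$. On the first region the $\epsilon$ buffer dominates the negative contribution; on the second we choose $B$ so that $B\exp(\kappa\min_{\bar{\Omega}}\phi)\,\rho^2 \geq \abs{A}M$, and finally take $B := \max\{\tilde{B},\,\abs{A}M\exp(-\kappa\min_{\bar{\Omega}}\phi)/\rho^2\}$ to remain compatible with Claim \ref{c_1}. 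This yields $V_j^{i,\hat{x},\epsilon}\geq 0$ globally and closes (b).

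The main obstacle will be the bookkeeping of the constants $A,\kappa,B,C,\epsilon$ in the correct order of dependence: $A$ is given, $\kappa>0$ is arbitrary, $B$ is fixed last so as to beat both $\tilde{B}$ and the $\abs{A}M/\rho^2$ threshold uniformly in $(y,x)$, and $C$ remains free (useful for the boundary-matching in later claims). A conceptually delicate point is that the low regularity of $\phi$ (merely continuous) would in principle obstruct any direct computation with $D_x V_j$ and $D^2_{xx}V_j$; however $(F4)$ sidesteps this issue entirely, since it is a zero-order bound insensitive to the gradient and Hessian components of the subjet. This is precisely why $(F4)$ is decisive for the claim, as the remark signals.
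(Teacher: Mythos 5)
Your overall strategy coincides with the paper's: part (a) is verbatim the paper's computation (the $j$-dependence of $V_j^{i,\hat{x},\epsilon}$ sits entirely in $c_{i,j}$, so the obstacle gap reduces to $c_{i,j}+c_{j,\tilde\lambda}-c_{i,\tilde\lambda}\geq 0$, i.e.\ $(O_4)$), and for part (b) you correctly identify that $(F4)$ reduces everything to the zero-order bound $V_j^{i,\hat{x},\epsilon}\geq 0$, with $(O_7)$ absorbing $g_i(\hat{x})+c_{i,j}$ and a large $B$ absorbing $A(\phi-\phi(\hat{x}))$ away from $\hat{x}$. Your remark that $(F4)$ makes the subjet elements $(p,X)$ irrelevant is exactly the point of the paper's Remark.

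The one genuine deviation is how you control $A(\phi(x)-\phi(\hat{x}))$ \emph{near} $\hat{x}$, and it introduces a gap. You choose $\rho$ by continuity so that $\abs{A}\,\abs{\phi(x)-\phi(\hat{x})}<\epsilon$ on $B(\hat{x},\rho)$ and let the $+\epsilon$ term in the barrier absorb it; consequently $\rho=\rho(\epsilon)$ and your threshold $B\geq \abs{A}M\exp(-\kappa\min_{\bar{\Omega}}\phi)/\rho(\epsilon)^2$ blows up as $\epsilon\to 0^{+}$. But the Proposition defines the whole family $\{V^{i,\hat{x},\epsilon}\}_{\epsilon>0}$ with a \emph{single} $B$, and the later argument (e.g.\ $\inf_{\epsilon>0}V_i^{i,\hat{x},\epsilon}(0,\hat{x})=g_i(\hat{x})$ and the Perron construction) needs all members of the family to be supersolutions simultaneously; an $\epsilon$-dependent $B$ does not deliver that. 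The paper avoids this by invoking the standing hypothesis on $\phi$ in Proposition \ref{existence of barriers} --- namely that $\phi(x)\geq\phi(\hat{x})$ on $\Omega_{\delta_{\hat{x}}}=B_{\rho_2}(\hat{x},\delta_{\hat{x}})\cap\Omega$ --- so that $A(\phi(x)-\phi(\hat{x}))\geq 0$ there (for the relevant sign of $A$) with a radius $\delta_{\hat{x}}$ and hence a constant $B=\max\{\tilde{B},-d_{N_1}/d_{N_2}\}$ that are independent of $\epsilon$. Your proof never uses this hypothesis on $\phi$, which is precisely the ingredient designed for this step; replacing your continuity argument by it closes the gap.
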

	\begin{proof} 
		Initially, we prove that $ \text{for all\ } (y,x)\in(0,L)\times\Omega, $
		\begin{gather}
			F(y,x,V_j^{i,\hat{x},\epsilon}(y,x),D_x V_j^{i,\hat{x},\epsilon}(y,x),D^2_{xx}V_j^{i,\hat{x},\epsilon}(y,x))\geq 0.
		\end{gather}
		From the axiom $ (F4) $, we receive that, $ \text{for all\ } (y,x)\in(0,L)\times\Omega, $
		\begin{gather}\label{c_2_F_4}
			F(y,x,V_j^{i,\hat{x},\epsilon}(y,x),D_x V_j^{i,\hat{x},\epsilon}(y,x),D^2_{xx}V_j^{i,\hat{x},\epsilon}(y,x))\geq V_j^{i,\hat{x},\epsilon}(y,x) .
		\end{gather}
	From the last, it is sufficient to prove that there exists an appropriate number $ B\geq \tilde{B} $ such that \\ $ V_j^{i,\hat{x},\epsilon}(y,x)\geq 0,\ \text{for all\ } (y,x)\in(0,L)\times\Omega. $ First, we have that 
	\begin{gather}
		V_j^{i,\hat{x},\epsilon}(y,x)=L_1(y,x)+L_2(x)\nonumber
	\end{gather}
	where 
	\begin{gather}
	L_1(y,x):=g_i(\hat{x})+\epsilon+C y+c_{i,j}(y,x)\nonumber\\
	L_2(x):= A \left( \phi(x)-\phi(\hat{x})\right) + B\exp\left( \kappa\phi(x)\right)\abs{x-\hat{x}}^2\nonumber.
	\end{gather}
	From the axiom $ \left(O_7 \right)  $, we obtain immediately that $ L_1(y,x)\geq 0,\ \text{for all\ } (y,x)\in(0,L)\times\Omega. $ Moreover, from the properties of function $ \phi, $ there exists $ \delta_{\hat{x}}>0 $ such that  for all $x\in\Omega_{\delta_{\hat{x}}}:=B_{\rho_2}\left(\hat{x},\delta \right)\cap\Omega, \\ \phi(x)-\phi(\hat{x}) \geq 0. $ Consequently, we receive that for all $(y,x)\in(0,L)\times\Omega_{\delta_{\hat{x}}}, V_j^{i,\hat{x},\epsilon}(y,x)=L_1(y,x)+L_2(x)\geq 0.  $ It remains to prove that for all $ (y,x)\in(0,L)\times\left( \Omega\setminus\Omega_{\delta_{\hat{x}}}\right) ,	V_j^{i,\hat{x},\epsilon}(y,x)\geq 0. $ Since, it holds $  L_1(y,x)\geq 0,\ \text{for all\ } (y,x)\in(0,L)\times\Omega, $  it is sufficent to show that $ \text{for all\ } x\in\Omega\setminus\Omega_{\delta_{\hat{x}}},\ L_2(x)\geq 0. $ We observe that the set $ \bar{\Omega}\setminus\Omega_{\delta_{\hat{x}}} $ is a closed and bounded subset of $ \R^n, $ i.e. a compact subset. Since the functions  $N_1(x)=A\left( \phi(x)-\phi(\hat{x}) \right) $ and $ N_2(x)=\exp\left(\kappa\phi(x) \right)\abs{x-\hat{x}}^2 $ are continuous on that compact set, they will be lower bounded on that set. Also, since $ \bar{\Omega}\setminus\Omega_{\delta_{\hat{x}}}\supset\Omega\setminus\Omega_{\delta_{\hat{x}}}, $ both functions will be lower bounded and on $ \Omega\setminus\Omega_{\delta_{\hat{x}}}. $ We set 
	\begin{gather}
		d_{N_1}:=\inf\left\lbrace N_1(x)\ :\ x\in\Omega\setminus\Omega_{\delta_{\hat{x}}}\right\rbrace \ \text{and}\ 
	    d_{N_2}:=\inf\left\lbrace N_2(x)\ :\ x\in\Omega\setminus\Omega_{\delta_{\hat{x}}}\right\rbrace\nonumber.
	\end{gather}
From the definition of function $ N_2, $ we see that its zero point is only the point $ \hat{x}, $ which is excluded from $ \bar{\Omega}\setminus\Omega_{\delta_{\hat{x}}}. $  Furthermore, 
\begin{gather}\label{d_n ineq}
	d_{N_2}\geq \tilde{d}_{N_2}:=\inf\left\lbrace N_2(x)\ :\ x\in\bar{\Omega}\setminus\Omega_{\delta_{\hat{x}}} \right\rbrace
\end{gather}
where $ \tilde{d}_{N_2}=N_2(\tilde{x}) $ for some $ \tilde{x}\in\bar{\Omega}\setminus\Omega_{\delta_{\hat{x}}}, $  since $ N_2 $ is continuous on the compact set $ \bar{\Omega}\setminus\Omega_{\delta_{\hat{x}}}.  $ Definitely, by the definition of $ N_2 $ we have $\tilde{d}_{N_2} \geq 0.$ In specific, $ \tilde{d}_{N_2} > 0 $, since if we suppose that $ N_2(\tilde{x})=0, $ we will have a zero point $ \tilde{x}\neq\hat{x} $, which is a contradiction. Finally, from $ (\ref{d_n ineq}) $ we get $d_{N_2}>0.  $
Then, we receive that $ \text{for all\ } x\in\Omega\setminus\Omega_{\delta_{\hat{x}}}, $
	\begin{gather} \label{d_n ineq_2}
		L_2(x)=N_1(x)+B N_2(x)\geq\inf\left\lbrace N_1(x)+B N_2(x)\ :\ x\in\Omega\setminus\Omega_{\delta_{\hat{x}}}\right\rbrace \geq d_{N_1}+ B d_{N_2}
	\end{gather}
If $ d_{N_1}\geq 0 $ then from  $ (\ref{d_n ineq_2}) $, we get the desired inequality  $ L_2(x)\geq 0,\ \text{for all\ } x\in\Omega\setminus\Omega_{\delta_{\hat{x}}}. $ If $ d_{N_1}<0 $, then from $ (\ref{d_n ineq_2}) $, if se set $ B\geq\max\left\lbrace \tilde{B},-\frac{d_{N_1}}{d_{N_2}} \right\rbrace  $, we receive again the desired inequality.\\
	Next, we proceed to the proof of 
		\begin{gather}
			V_j^{i,\hat{x},\epsilon}(y,x)-\mathcal{M}_j V^{i,\hat{x},\epsilon}(y,x)\geq 0,\ \text{for all\ } (y,x)\in(0,L)\times\Omega\nonumber.
		\end{gather}
		We have that $ \text{for all\ } (y,x)\in (0,L)\times\Omega, $
		\begin{flalign}
			V_j^{i,\hat{x},\epsilon}(y,x)-\mathcal{M}_j V^{i,\hat{x},\epsilon}(y,x)&= V_j^{i,\hat{x},\epsilon}(y,x)-\max_{\lambda\neq j}\left(V_{\lambda}^{i,\hat{x},\epsilon}(y,x) -c_{j,\lambda}(y,x)\right) \nonumber\\
			&= c_{i,j}(y,x)-\max_{\lambda\neq j}\left(c_{i,\lambda}(y,x)-c_{j,\lambda}(y,x)\right) \nonumber\\
			&= c_{i,j}(y,x)-\left( c_{i,\tilde{\lambda}}(y,x)-c_{j,\tilde{\lambda}}(y,x)\right)\ \left(  \text{for some}\ \tilde{\lambda}\neq j\right) \nonumber\\
			&= c_{i,j}(y,x)- c_{i,\tilde{\lambda}}(y,x)+c_{j,\tilde{\lambda}}(y,x)\label{claim_c2_eq1}
		\end{flalign}
		where the second equality is a result from the definition of function $ V_j^{i,\hat{x},\epsilon}. $ Using now the axiom $ (O_4) $, we receive that 
		\begin{gather}
			c_{i,j}(y,x)+c_{j,\tilde{\lambda}}(y,x) \geq c_{i,\tilde{\lambda}}(y,x)\nonumber\\
			\iff c_{i,j}(y,x)+c_{j,\tilde{\lambda}}(y,x) - c_{i,\tilde{\lambda}}(y,x)\geq 0\label{claim_c2_eq2}
		\end{gather}
		Finally, from $ (\ref{claim_c2_eq1}) $ and $ (\ref{claim_c2_eq2}) $ we get desired result.
	\end{proof} 
	\begin{claim}\label{c_3}
		For a given $ A\in\R,  $ there exists $ \kappa>0 $  such that
		\begin{gather}\label{c_3_2}
			\text{for all\ } j\in\left\lbrace 1,2,\dots,m\right\rbrace \text{for all\ } (y,x)\in \left( 0,L\right]\times\partial\Omega,\ V_j^{i,\hat{x},\epsilon}(y,x)\geq f_j(y,x).
		\end{gather}
	\end{claim}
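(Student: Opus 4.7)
The plan is to verify the inequality $V_j^{i,\hat x,\epsilon}(y,x)\ge f_j(y,x)$ pointwise on $(0,L]\times\partial\Omega$, using the compatibility assumption $(O_6)$, continuity of the data, and the strict positivity coming from $\epsilon>0$ and the exponential term. First I will rewrite
\begin{gather*}
V_j^{i,\hat{x},\epsilon}(y,x)-f_j(y,x)=\bigl[g_i(\hat{x})+c_{i,j}(y,x)-f_j(y,x)\bigr]+A\bigl(\phi(x)-\phi(\hat{x})\bigr)+B\,e^{\kappa\phi(x)}\abs{x-\hat{x}}^2+\epsilon+Cy
\end{gather*}
and estimate the right-hand side termwise.

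Setting $M_g:=\min_k\min_{x'\in\partial\Omega}g_k(x')$ and $M_f:=\max_k\max_{[0,L]\times\partial\Omega}f_k$, axiom $(O_6)$ yields $c_{i,j}(y,x)-f_j(y,x)\ge M_c-M_f\ge -M_g$ on $[0,L]\times\partial\Omega$, so the bracketed quantity is bounded below by the constant $g_i(\hat{x})-M_g$. The term $A(\phi(x)-\phi(\hat{x}))$ is bounded below by $-\abs{A}(\norm{\phi}_\infty+\phi(\hat{x}))$ since $\phi$ is continuous on the compact set $\bar{\Omega}$, and $Be^{\kappa\phi(x)}\abs{x-\hat{x}}^2\ge 0$. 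Taking $C\ge 0$ makes the residual term $\epsilon+Cy$ strictly positive.

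A case split on $\hat{x}$ will conclude the argument. If $\hat{x}\in\partial\Omega$, then $g_i(\hat{x})\ge M_g$, so the bracket is nonnegative; at $x=\hat{x}$ the estimate reduces to $\epsilon+Cy>0$, and continuity of $\phi, c_{i,j}, f_j$ at $(y,\hat{x})$ propagates the inequality to a small boundary ball $\partial\Omega\cap B(\hat{x},r)$, with the loss absorbed by the fixed margin $\epsilon$. On $\partial\Omega\setminus B(\hat{x},r)$ we have $\abs{x-\hat{x}}\ge r$, hence $Be^{\kappa\phi(x)}\abs{x-\hat{x}}^2\ge Br^2$ can be made to dominate $\abs{A}(\norm{\phi}_\infty+\phi(\hat{x}))$ by enlarging $B$. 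If instead $\hat{x}\notin\partial\Omega$, set $d:=\mathrm{dist}(\hat{x},\partial\Omega)>0$; for every $x\in\partial\Omega$ one has $Be^{\kappa\phi(x)}\abs{x-\hat{x}}^2\ge Bd^2$, and enlarging $B$ so that $Bd^2\ge (M_g-g_i(\hat{x}))_{+}+\abs{A}(\norm{\phi}_\infty+\phi(\hat{x}))$ closes this case uniformly.

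Any fixed $\kappa>0$ is enough for the estimates above, since only $e^{\kappa\phi(x)}\ge 1$ is exploited; the precise value of $\kappa$ is forced by the interior supersolution requirements in subsequent claims (which need to control first and second derivatives of $e^{\kappa\phi(x)}\abs{x-\hat{x}}^2$ through the operator $F$), and is chosen sufficiently large there. The main obstacle is not the intrinsic difficulty of the boundary estimate but the simultaneous choreography of parameters: the final $B$ must be taken as the maximum of the threshold $\tilde B$ from Claim \ref{c_1}, the threshold from Claim \ref{c_2}, and the one produced above, ensuring that the supersolution property in $\Omega$, the initial-data compatibility at $y=0$, and the present boundary inequality on $(0,L]\times\partial\Omega$ all hold simultaneously for one consistent choice of $(\kappa,B,C)$.
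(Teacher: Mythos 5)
Your proposal is correct, and its skeleton (termwise decomposition of $V_j^{i,\hat x,\epsilon}-f_j$, the use of $(O_6)$ to make the bracket $g_i(\hat x)+c_{i,j}-f_j$ controllable, and the dichotomy $\hat x\in\partial\Omega$ versus $\hat x\in\Omega$ with $d=\mathrm{dist}(\hat x,\partial\Omega)>0$ giving $\lvert x-\hat x\rvert^2\ge d^2$) coincides with the paper's. The genuine difference is which parameter absorbs the possibly negative term $A(\phi(x)-\phi(\hat x))$: the paper keeps the $B$ fixed by Claims \ref{c_1}--\ref{c_2} and inflates $\kappa$ (via $e^{\kappa\phi_{\min}}\ge\kappa\phi_{\min}$, which is why the claim is phrased as ``there exists $\kappa$''), whereas you freeze $\kappa$ and inflate $B$. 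Your route is legitimate because the constraints on $B$ in Claims \ref{c_1} and \ref{c_2} are one-sided thresholds ($B\ge\tilde B$, $B\ge -d_{N_1}/d_{N_2}$), hence stable under enlargement — a point you correctly make explicit; what the paper's route buys instead is that $B$ is settled once and for all before this claim, so no re-examination is needed. Two further remarks. First, in the case $\hat x\in\partial\Omega$ your argument is actually more complete than the paper's, which only verifies the inequality at the single boundary point $x=\hat x$; your near/far splitting of $\partial\Omega$ addresses all boundary points. Second, the price of that splitting is that the radius $r$ is calibrated against the margin $\epsilon$, so the resulting $B$ depends on $\epsilon$; this does not invalidate the claim for each fixed $\epsilon$ (and the identity $\inf_{\epsilon>0}V_i^{i,\hat x,\epsilon}(0,\hat x)=g_i(\hat x)$ is unaffected since the $B$-term vanishes at $\hat x$), but it mildly deviates from the uniform-in-$\epsilon$ choice of $(\kappa,B,C)$ asserted in Proposition \ref{existence of barriers}; you could remove the dependence by noting that on the near piece the bracket already supplies the nonnegativity and only $\lvert A\rvert\,\omega_\phi(r)\le\epsilon$ is needed, or by adopting the paper's device of tuning $\kappa$ instead.
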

	
	\begin{proof}
		From the Claim  \ref{c_1}, we have seen that for a given number $ A\in\R $, we selected appropriate number $ B\in\R^{+} $  such that the inequality $ V_j^{i,\hat{x},\epsilon}(0,x)> g_j(x),\ \text{for all\ } x\in\bar{\Omega} $ holds. In this claim, we proceed to the verification of an appropriate $ \kappa>0 $, such that $ \text{for all\ } (y,x)\in \left( 0,L\right]\times\partial\Omega, V_j^{i,\hat{x},\epsilon}(y,x)\geq f_j(y,x). $ We will notice below, that the proof of the last inequality is independent from the choice of the constant $ C. $ For this task, we are going to use the axiom $ (O_6). $ To prove the above inequality, we distinguish the following cases with respect to the position of $ \hat{x}. $
		\begin{itemize}
			\item[a)]$\hat{x}\in\partial\Omega$.\\
			We see that $ \text{for all\ } y\in\left( 0,L\right]  $
			\begin{align}
				V_j^{i,\hat{x},\epsilon}(y,\hat{x})&=g_i(\hat{x})+c_{i,j}(y,\hat{x})+Cy+\epsilon\nonumber\\
				&>^{(C\geq0,\epsilon>0)}\min_{\lambda\in\left\lbrace 1,2,\dots,m \right\rbrace }\{g_{\lambda}(x)\mid\ x\in\partial\Omega\}+ \min_{d,k\in\left\lbrace 1,2,\dots,m \right\rbrace }\{c_{d,k}(y,x)\mid\ y\in[0,L] x\in\partial\Omega\}\nonumber\\
				&\geq^{(O_6)} \max_{\lambda\in\left\lbrace 1,2,\dots,m \right\rbrace } \{f_{\lambda}(y,x) \mid y\in[0,L],\ x\in\partial\Omega\}\geq f_j(y,\hat{x})\nonumber
			\end{align}
			\item [b)] $ \hat{x}\in\Omega $.\\
			Due to the fact that $ \hat{x}\in\Omega  $ we receive that $ \hat{x}\notin\partial\Omega,\ i.e \left\lbrace \hat{x} \right\rbrace \cap\partial\Omega=\emptyset  $ (since $ \Omega $ is open). Also, from the compactness of $ \partial\Omega $ and the closed set $ \left\lbrace \hat{x}\right\rbrace  $, we receive that
			\begin{gather}
				d\equiv dist(\hat{x},\partial\Omega)=dist(\left\lbrace\hat{x} \right\rbrace ,\partial\Omega)>0.
			\end{gather} 
			Moreover, due to the continuity of $ \phi $ on the compact set $ \partial\Omega, $ the function $ \phi $ attains a minimum value. We set $ \phi_{\min}:=\min\{\phi(x)\mid x\in\partial\Omega\}=\phi(\tilde{x}) $, for some $ \tilde{x}\in\partial\Omega. $ Also, $ \phi_{\min}>0, $ since $ \phi(x)>0 $ on $ \bar{\Omega}. $ By the same argument, the function $ \Gamma_A:=A (\phi(x)-\phi(\hat{x})) $ attains a minimum value, which is denoted as $ \Gamma_A^{\min}. $
			Then, $ \text{for all\ } y\in(0,L],\ \text{for all\ } x\in\partial\Omega $ we observe that
			\begin{align}
				V_j^{i,\hat{x},\epsilon}(y,x)&=g_i(\hat{x})+ A\left(\phi(x)-\phi(\hat{x}) \right)+ B \exp(\kappa\phi(x))\abs{x-\hat{x}}^2+C y+ c_{i,j}(y,x)+\epsilon\nonumber\\
				&>^{(\exp\text{is increasing func.} )}_{(\epsilon>0)}\min_{\lambda\in\{1,2,\dots,m\}}\{g_{\lambda}(x)\mid x\in\partial\Omega\}+\nonumber\\
				&+\min_{d,k\in\left\lbrace 1,2,\dots,m \right\rbrace }\{c_{d,k}(y,x)\mid\ y\in[0,L] x\in\partial\Omega\}+ \Gamma_A^{\min}+B\ d^2\exp(k\phi_{\min})\nonumber\\
				&\geq^{(O_6)}\max_{\lambda\in\left\lbrace 1,2,\dots,m \right\rbrace } \{f_{\lambda}(y,x) \mid y\in[0,L],\ x\in\partial\Omega\}+\Gamma_A^{\min}+B\ d^2\exp(k\phi_{\min})\nonumber\\
				&\geq f_j(y,x)+\Gamma_A^{\min}+B\ d^2\exp(k\phi_{\min})\nonumber.
			\end{align}
			From the last inequality, in order to show that $ V_j^{i,\hat{x},\epsilon}(y,x)>f_j(y,x),  $ it is sufficent to show that the quantity $ J:= \Gamma_A^{\min}+B\ d^2\exp(k\phi_{\min})$ is non-negative. Indeed,\\
			$$  J\geq 0 \iff \kappa\geq\kappa_0:=\frac{-\Gamma_A^{\min}}{B\ d^2\ \phi_{\min}}$$
			As a result from the last inequality, we can select for $ \kappa $, to be every value in the interval\\ $ \left[ \max\{1,\kappa_0\},\infty\right)  $, since we want  $ \kappa $ to be positive.
		\end{itemize} 
		In both cases, we observe that, for any given number A, the determination of the constant $ C, $ does not play any role for the satisfaction of our defined target: $ V_j^{i,\hat{x},\epsilon}(y,x)>f_j(y,x),\ \text{for all\ } y\in(0,L],\\ \text{for all\ } x\in\partial\Omega. $
	\end{proof} 
	Following an analogous procedure, as we did for the function $ V^{i,\hat{x},\epsilon}:[0,L]\times\bar{\Omega}\to\R^m $, for any given number $ A\in\R, $ we prove that there exists constants $ \tilde{B}\in\R^{+} $ and $ \tilde{\kappa}>0 $ such that the function $  U^{\hat{x},\epsilon}:[0,L]\times\bar{\Omega}\to\R^m $ is a viscosity subsolution of the problem $ (BVP) $ and moreover satisfies $ \sup_{\epsilon>0}U_i^{\hat{x},\epsilon}(0,\hat{x})=g_i(\hat{x}),\ \text{for all\ } i\in\{1,2,,\dots,m\}. $ Additionally, for any given number $ A\in\R $, we can choose appropriate constants $ B\in\R^{+} $ and $ \kappa>0 $ such that, both functions\\ $ V^{i,\hat{x},\epsilon}:[0,L]\times\bar{\Omega}\to\R^m $ and $ U^{\hat{x},\epsilon}:[0,L]\times\bar{\Omega}\to\R^m  $ satisfy respectively the definition of viscosity supersolution and subsolution for the problem $ (BVP). $
\end{proof}
\begin{proof}(\textbf{of Proposition $ \ref{Perronmethod} $}) 
	Throughout the proof, we fix a random index $ i\in\{1,2,\dots,m\}. $ Let $ \epsilon_o>0 $ be a random point. Then from Proposition $ \ref{existence of barriers} $ we receive that the function\\ $ V_j^{i,\hat{x},\epsilon_o}:[0,L]\times\bar{\Omega}\to\R,\ j\in\{1,2,\dots,m\}, $ attains a maximum value as a continuous on a compact set. Let $ M_{\epsilon_o}^j:=\max_{[0,L]\times\bar{\Omega}}V_j^{i,\hat{x},\epsilon_o} $ and $ M_{\epsilon_o}:=\max\{M_{\epsilon_o}^j\ :\ \ j=1,2,\dots,m \} $. Next, we define the following function, $ w:[0,L]\times\bar{\Omega}\to\R^m $, where $ w=(w_1,w_2,\dots,w_m) $ with $ w_i:[0,L]\times\bar{\Omega}\to\R,\\ i\in\{1,2,\dots,m\}, $ defined as 
	\begin{gather}
		w_i(y,x):=\sup\{u_i(y,x)\ :\ \ u\in\mathcal{F}\}, \text{for all\ }(y,x)\in[0,L]\times\bar{\Omega}
	\end{gather}
where
\begin{gather}
	\mathcal{F}:=\{u=(u_1,u_2,\dots,u_m):[0,L]\times\bar{\Omega}\to\R \ : \ u\ \text{subsolution of (BVP)}, u_i\leq M_{\epsilon_o}\ \text{on}\ [0,L]\times\bar{\Omega}\}\nonumber.
\end{gather}
	Initially, we prove that the above function  is well defined and bounded as well. Later, it is proved that the function $ w_i $ satisfies the definition of viscosity solution.   
	\begin{claim}\label{well defined}
		The function $ w_i:[0,L]\times\bar{\Omega}\to\R  $ is well defined and bounded. 
	\end{claim}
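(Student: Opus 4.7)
The plan is to establish both well-definedness (i.e., $\mathcal{F}\neq\emptyset$ so the pointwise supremum makes sense as a real number at every point) and two-sided boundedness, by producing an explicit element of $\mathcal{F}$ coming from Proposition \ref{existence of barriers} and combining it with the Comparison Principle (Theorem \ref{Comparison Principle}).

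First, I would invoke Proposition \ref{existence of barriers} with the fixed $\hat{x}\in\bar{\Omega}$ and $\epsilon_o>0$ to obtain the continuous viscosity subsolution $U^{\hat{x},\epsilon_o}$ and the continuous viscosity supersolution $V^{i,\hat{x},\epsilon_o}$ of (BVP). Applying Theorem \ref{Comparison Principle} to the pair $(U^{\hat{x},\epsilon_o},V^{i,\hat{x},\epsilon_o})$ yields, for every $j\in\{1,2,\dots,m\}$, the inequality $U_j^{\hat{x},\epsilon_o}(y,x)\leq V_j^{i,\hat{x},\epsilon_o}(y,x)$ on $[0,L)\times\bar{\Omega}$; in particular $U_i^{\hat{x},\epsilon_o}\leq V_i^{i,\hat{x},\epsilon_o}\leq M_{\epsilon_o}$ on that set. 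The slice $\{L\}\times\bar{\Omega}$ is then handled by continuity of $U_i^{\hat{x},\epsilon_o}$ on the compact set $[0,L]\times\bar{\Omega}$: passing to the limit $y\to L^{-}$ in the previous inequality (or, equivalently, absorbing the continuous maximum of $U_i^{\hat{x},\epsilon_o}$ on $\{L\}\times\bar{\Omega}$ into the constant $M_{\epsilon_o}$) shows that $U_i^{\hat{x},\epsilon_o}\leq M_{\epsilon_o}$ on the full closed cylinder. Consequently $U^{\hat{x},\epsilon_o}\in\mathcal{F}$, so $\mathcal{F}\neq\emptyset$ and the pointwise supremum defining $w_i(y,x)$ is taken over a nonempty set.

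Once nonemptiness is secured, the upper bound for $w_i$ is immediate from the very definition of $\mathcal{F}$: every $u\in\mathcal{F}$ satisfies $u_i(y,x)\leq M_{\epsilon_o}$ pointwise, hence $w_i(y,x)\leq M_{\epsilon_o}$ for all $(y,x)\in[0,L]\times\bar{\Omega}$. For the lower bound I would use the specific element $U^{\hat{x},\epsilon_o}\in\mathcal{F}$ just produced: since $U_i^{\hat{x},\epsilon_o}$ is continuous on the compact set $[0,L]\times\bar{\Omega}$, it attains a minimum value $m_{\epsilon_o}^{i}\in\R$, and the inequality $w_i(y,x)\geq U_i^{\hat{x},\epsilon_o}(y,x)\geq m_{\epsilon_o}^{i}$ gives a uniform finite lower bound. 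Combining the two estimates shows that $w_i$ takes values in the bounded interval $[m_{\epsilon_o}^{i},M_{\epsilon_o}]$, which simultaneously certifies well-definedness (the supremum is finite at every point) and boundedness.

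The main technical point I expect to negotiate is the verification that $U^{\hat{x},\epsilon_o}$ really lies in $\mathcal{F}$, i.e., that the upper bound $U_i^{\hat{x},\epsilon_o}\leq M_{\epsilon_o}$ holds on the whole closed cylinder and not merely on $[0,L)\times\bar{\Omega}$ where the Comparison Principle directly applies; this is where the continuity of the explicit barrier from Proposition \ref{existence of barriers} on the compact domain is essential. Everything else in the argument is routine set-theoretic manipulation of suprema and does not require any further viscosity-solution machinery.
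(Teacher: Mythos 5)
Your proposal is correct and follows essentially the same route as the paper: produce a barrier subsolution from Proposition \ref{existence of barriers}, compare it with the barrier supersolution via the Comparison Principle on $[0,L)\times\bar{\Omega}$, extend the bound to the slice $\{L\}\times\bar{\Omega}$ by continuity to conclude $U^{\hat{x},\epsilon_o}\in\mathcal{F}$, and then read off the two-sided bound from the definition of $\mathcal{F}$ and the minimum of the continuous barrier on the compact cylinder. The only cosmetic difference is that the paper observes the whole family $(U^{\hat{x},\epsilon})_{\epsilon>0}$ lies in $\mathcal{F}$ whereas you fix a single $\epsilon_o$; either suffices.
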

	\begin{proof}
		By saying well defined, we mean that, the function $ w_i:[0,L]\times\bar{\Omega}\to\R$ attains a real value for each $ (y,x)\in[0,L]\times\bar{\Omega}.$ We define for each $ (y,x)\in[0,L]\times\bar{\Omega},\ i\in\{1,2,\dots,m\} $ the following set:
		$$A^i_{y,x}:=\{u_i(y,x)\ :\ \ u\in\mathcal{F}\}.$$
		We observe that the above set is non empty. Indeed, we know from the Proposition $ \ref{existence of barriers} $ that there exists a family of continuous subsolutions $ (U_i^{\hat{x},\epsilon})_{\epsilon>0} $ of the problem (BVP). Since the function\\ $ V_i^{i,\hat{x},\epsilon_o} $ is a supersolution of (BVP), then from the Comparison Principle, we receive that \\ $ U_i^{\hat{x},\epsilon}(t,x)\leq V_i^{i,\hat{x},\epsilon_o}(y,x),   $ on $ [0,L)\times\bar{\Omega} $. Furthermore, from the last inequality and the fact that $ U_i^{\hat{x},\epsilon}, V_i^{i,\hat{x},\epsilon_o}  $ are continuous at the point $ (L,x)\in[0,L]\times\bar{\Omega} $, we receive that $ U_i^{\hat{x},\epsilon} (L,x)\leq V_i^{i,\hat{x},\epsilon_o} (L,x) $. In conclution, we have that $ U_i^{\hat{x},\epsilon} (y,x)\leq V_i^{i,\hat{x},\epsilon_o} (y,x)\leq M_{\epsilon_o}. $ As a result the family $ (U_i^{\hat{x},\epsilon})_{\epsilon>0} $, is contained in $ A^i_{y,x}:=\{u_i(y,x)\ :\  u\in\mathcal{F} \} $. It remains to prove that the set $ A^i_{y,x}  $ is upper bounded for each $(y,x)\in[0,L]\times\bar{\Omega}.$ Then, by completeness we receive that $ w_i(y,x):=\sup A^i_{y,x} $ is located in $ \R. $ Let $ (y_o,x_o) $ be a random point in $ [0,L]\times\bar{\Omega}. $ Then, $ u_i(y_o,x_o)\leq M_{\epsilon_o}, \text{for all\ } u\in\mathcal{F}. $ From the last we see that $ A^i_{y_o,x_o}  $ is upper bounded from $ M_{\epsilon_o} $. Moreover, we have that $ w_i(y_o,x_o)=\sup A^i_{y_o,x_o}\leq M_{\epsilon_o}  $. I.e $ w_i $ is upper bounded by $ M_{\epsilon_o}. $
		We proceed to the proof that function $ w_i $ is lower bounded. Indeed, we choose a random $ \epsilon^{*}>0. $ Then, from the fact that $ U_i^{\hat{x},\epsilon^*}\in\mathcal{F}, $ is continuous on the compact set $ [0,L]\times\bar{\Omega}, $ will attain a minimum value $ m:=\min_{(y,x)\in[0,L]\times\bar{\Omega}}U_i^{\hat{x},\epsilon^*}(y,x) $. Then, we have that $ \text{for all\ } (y,x)\in[0,L]\times\bar{\Omega}, $
		\begin{gather}
			m\leq U_i^{\hat{x},\epsilon^*}(y,x)\leq w_i(y,x):=\sup\{u_i(y,x)\ :\ u\in\mathcal{F} \},
		\end{gather}
		consequently $ w_i $ is lower bounded. 
	\end{proof}

	We proceed to the proof that function $ w_i:[0,L]\times\bar{\Omega}\to\R $ is a viscosity solution of (IBVP).
	For this task, we will need the notions of lower and upper semicontinuous envelopes\footnote{Let $ (X,\rho) $ be a metric space, $ A\subset X $ and $ f:A\to[-\infty,\infty]. $ We define as lower semicontinuous envelope of the function $ f $, the function
		\begin{gather}
			f_{*}:A\to[-\infty,\infty],\ f_{*}(x):=\lim_{\delta\to 0}\inf\{f(y)\ :\ y\in A\cap B_{\rho}(x,\delta)\},\ \text{for all\ } x\in A\nonumber
		\end{gather}
		Analogously, we define as upper semicontinuous envelope of the function $ f $, the function
		\begin{gather}
			f^{*}:A\to[-\infty,\infty],\ f^{*}(x):=\lim_{\delta\to 0}\sup\{f(y)\ :\ y\in A\cap B_{\rho}(x,\delta) \},\ \text{for all\ } x\in A\nonumber
		\end{gather}
	} of $ w. $ In specific, we denote $ \text{for all\ } i\in\{1,2,\dots,m\} $
	\begin{gather}
		w_{*,i}:[0,L]\times\bar{\Omega}\to\bar{\R}\nonumber\ \text{and}\ w_{i}^{*}:[0,L]\times\bar{\Omega}\to\bar{\R}\nonumber
	\end{gather}
	to be the lower and the upper semicontinuous envelope of function $ w $ respectively. By their definition, it is extrtacted that the lower semicontinuous envelope $ w_{*,i} $ of $ w_i $ is the largest lower semicontinuous function that it is dominated by function $ w_i $. Moreover, the upper semicontinuous function $ w_{i}^{*} $ is the smallest upper semicontinuous function that dominates the function $ w_i $. So we have
	\begin{gather}
		w_{*,i}\leq w_i,\ \text{on}\ [0,L]\times\bar{\Omega}\ \text{and}\ \text{for all\ } h\in LSC([0,L]\times\bar{\Omega}),\ h\leq w_i\Rightarrow h\leq w_{*,i}\label{perron_1} \\
		w_{i}^{*}\geq w_i,\ \text{on}\ [0,L]\times\bar{\Omega}\ \text{and}\ \text{for all\ } h\in USC([0,L]\times\bar{\Omega}),\ h\geq w_i\Rightarrow h\geq 	w_{i}^{*}\label{perron_2}.
	\end{gather}
	Also, we observe that $ w_{*,i} $ and $ w_i^{*} $ take real values and in particular, both envelopes are bounded functions by the numbers $ m $ and $ M_{\epsilon_o}. $ I.e $ Im( w_{i,*})\subset [m,M_{\epsilon_o}] $ and $ Im(w_i^{*})\subset[m,M_{\epsilon_o}]. $ The last is a consequence from the definition of  $ w_{*,i} $ and $ w_i^{*} $ and the fact that $ w_i $ is bounded function.
	\begin{claim}\label{bound_wi} 
		For the function $ w_i:[0,L]\times\bar{\Omega}\to\R, $ the folowing holds:
		$$\text{for all\ }\epsilon>0,\ \text{for all\ }(t,x)\in[0,L)\times\bar{\Omega},\ w_i(y,x)\leq V_i^{i,\hat{x},\epsilon}(y,x).$$
	\end{claim}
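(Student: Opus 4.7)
The plan is a short application of two already-established facts. Proposition \ref{existence of barriers} guarantees that, for every $\epsilon>0$, the vector-valued function $V^{i,\hat{x},\epsilon}=(V_1^{i,\hat{x},\epsilon},\dots,V_m^{i,\hat{x},\epsilon})$ is a viscosity supersolution of (BVP); meanwhile, every element $u\in\mathcal{F}$ is, by the very definition of $\mathcal{F}$, a viscosity subsolution of (BVP). Hence the Comparison Principle (Theorem \ref{Comparison Principle}) applies directly to the pair $(u,V^{i,\hat{x},\epsilon})$.

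First, I would fix an arbitrary $\epsilon>0$ and an arbitrary $u=(u_1,\dots,u_m)\in\mathcal{F}$. Applying Theorem \ref{Comparison Principle} to $(u,V^{i,\hat{x},\epsilon})$ yields the componentwise ordering
$$u_j(y,x)\leq V_j^{i,\hat{x},\epsilon}(y,x),\quad \text{for all } j\in\{1,\dots,m\},\ (y,x)\in[0,L)\times\bar{\Omega}.$$
Specializing to the fixed index $j=i$ and taking the supremum over $u\in\mathcal{F}$ preserves the inequality, since the right-hand side does not depend on $u$. By the very definition $w_i(y,x)=\sup\{u_i(y,x)\,:\,u\in\mathcal{F}\}$, this immediately gives $w_i(y,x)\leq V_i^{i,\hat{x},\epsilon}(y,x)$ on $[0,L)\times\bar{\Omega}$, and arbitrariness of $\epsilon>0$ finishes the claim.

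I do not anticipate any genuine obstacle in this step. The substantive analytic work has already been carried out in Theorem \ref{Comparison Principle}, which handles the delicate interplay between the fully nonlinear operator $F$ and the interconnected obstacles $\mathcal{M}_j$, and in Proposition \ref{existence of barriers}, which certifies the supersolution property of $V^{i,\hat{x},\epsilon}$ together with the matching of initial and boundary data. The only mild point to record is that the Comparison Principle produces its inequality exclusively on $[0,L)\times\bar{\Omega}$ (not up to $y=L$), but this is precisely the range appearing in the statement, so no additional extension argument at the upper boundary $y=L$ is needed.
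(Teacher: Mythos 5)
Your proposal is correct and follows essentially the same route as the paper: fix $\epsilon>0$ and $u\in\mathcal{F}$, apply the Comparison Principle (Theorem \ref{Comparison Principle}) to the subsolution $u$ and the supersolution $V^{i,\hat{x},\epsilon}$ furnished by Proposition \ref{existence of barriers}, then take the supremum over $u\in\mathcal{F}$ and invoke the definition of $w_i$. Your closing remark about the inequality holding only on $[0,L)\times\bar{\Omega}$ matches the paper's phrasing and is the right thing to record.
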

	\begin{proof}
		Let $ \epsilon>0 $ be a random point, that we fix and let $ u\in\ F $ be random function. Then, from the comparison principle, since $ V_i^{i,\hat{x},\epsilon} $ is a supersolution of $ (BVP) $, it is extracted that $ u_i(y,x)\leq V_i^{i,\hat{x},\epsilon}(y,x),\ \text{for all\ } (y,x)\in[0,L)\times\bar{\Omega}. $ By setting  a fixed point $ (y_o,x_o)\in[0,L)\times\bar{\Omega},$ we receive $ u_i(y_o,x_o)\leq V_i^{i,\hat{x},\epsilon}(y_o,x_o),\ \text{for all\ } u\in\mathcal{F}. $ From the last, we see that the real number $ V_i^{i,\hat{x},\epsilon}(y_o,x_o) $ is an upper bound of the set $ A^i_{(y_o,x_o)} $. By the definition of $ w_i  $, we conclude that\\ $ w_i(y_o,x_o)=\sup A^i_{(y_o,x_o)}\leq V_i^{i,\hat{x},\epsilon}(y_o,x_o).$ Because $ (y_o,x_o)\in[0,L)\times\bar{\Omega} $ is a random point, we have the desired result $  w_i(y,x)\leq V_i^{i,\hat{x},\epsilon}(y,x),\text{for all\ } (y,x)\in [0,L)\times\bar{\Omega}. $
	\end{proof}
	
	\begin{claim}\label{viscosity_property}
		The function $ w:[0,L]\times\bar{\Omega}\to\R^m $ satisfies the axioms of viscosity solution for (IBVP).
	\end{claim}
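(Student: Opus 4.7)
The plan is a standard Perron-style argument adapted to the interconnected obstacle structure. By Claim \ref{well defined}, $w_i$ is bounded on $[0,L]\times\bar{\Omega}$. Denote by $w^{*}=(w_1^{*},\dots,w_m^{*})$ and $w_{*}=(w_{*,1},\dots,w_{*,m})$ the componentwise upper and lower semicontinuous envelopes. The strategy is to show that $w^{*}$ is a viscosity subsolution and $w_{*}$ is a viscosity supersolution of (BVP); then Theorem \ref{Comparison Principle} forces $w_i^{*}\leq w_{*,i}$ on $[0,L)\times\bar{\Omega}$, which together with the trivial sandwich $w_{*,i}\leq w_i\leq w_i^{*}$ forces equality. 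This simultaneously yields continuity of $w_i$ on $[0,L)\times\bar{\Omega}$ and the axioms (ii), (iii) of viscosity solution, establishing axiom ($\gamma$).

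First I would settle the boundary/initial conditions. For every $u\in\mathcal{F}$ one has $u_i(0,x)\leq g_i(x)$ and $u_i(y,x)\leq f_i(y,x)$ on $(0,L)\times\partial\Omega$; taking the supremum over $\mathcal{F}$ gives the same inequalities for $w_i$, and continuity of $g_i,f_i$ transfers them to $w_i^{*}$. For the reverse direction, Proposition \ref{existence of barriers} supplies families $U^{\hat{x},\epsilon}\in\mathcal{F}$ with $\sup_{\epsilon>0}U_i^{\hat{x},\epsilon}(0,\hat{x})=g_i(\hat{x})$, so $w_i(0,\hat{x})\geq g_i(\hat{x})$; an analogous barrier anchored on boundary points of $\partial\Omega$ gives $w_i(y,\hat{x})\geq f_i(y,\hat{x})$. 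Passing through the envelope and using continuity of the data transfers these to $w_{*,i}$.

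Next, and this is the core Perron step, I would prove the interior inequalities. For the subsolution property of $w^{*}$, suppose for contradiction that at some $(\bar{y},\bar{x})\in(0,L)\times\Omega$ and some $(\alpha,p,X)\in\bar{J}^{2,+}w_i^{*}(\bar{y},\bar{x})$ one has simultaneously $F(\bar{y},\bar{x},w_i^{*}(\bar{y},\bar{x}),p,X)>0$ and $w_i^{*}(\bar{y},\bar{x})-\mathcal{M}_i w^{*}(\bar{y},\bar{x})>0$. By continuity of $F$, of the $c_{ij}$, and upper semicontinuity of the remaining components, both strict inequalities persist on some neighbourhood $\mathcal{U}$ of $(\bar{y},\bar{x})$. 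I would then construct $\tilde{u}\in\mathcal{F}$ by replacing $w_i$ inside $\mathcal{U}$ with a smooth bumped-up version (using a local test function adjusted by a small positive quadratic) and glueing back via a maximum outside, keeping the other components equal to some member of $\mathcal{F}$. The $F$-inequality in $\mathcal{U}$ is preserved thanks to strict positivity of $F$ and assumptions (F1)--(F3); the obstacle inequality for the $i$-th component in $\mathcal{U}$ is preserved thanks to the strict margin; and for $j\neq i$ the obstacle $\mathcal{M}_j\tilde{u}$ can only increase, which keeps the $j$-th component below its obstacle exactly as before. Choosing the bump small enough to keep $\tilde{u}_i\leq M_{\epsilon_o}$ (using Claim \ref{bound_wi}) then gives $\tilde{u}\in\mathcal{F}$ with $\tilde{u}_i(\bar{y},\bar{x})>w_i(\bar{y},\bar{x})$, contradicting the definition of $w_i$ as supremum over $\mathcal{F}$. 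The supersolution property of $w_{*}$ is the dual Perron argument, carried out by bumping the $i$-th component \emph{down}, and is actually the more delicate direction because the candidate perturbation must be shown to still dominate every element of $\mathcal{F}$.

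The main obstacle is precisely this interaction with the coupling term $\mathcal{M}_i$: any local perturbation of $w_i$ alters the obstacles of every other component through the $\max_{j\neq i}$, so the vector-valued nature of $(BVP)$ prevents the classical Perron bump from being purely local. This is resolved by choosing to perturb only one coordinate at a time in the direction that \emph{weakens} the obstacle constraints of the other coordinates, together with the strict separation from the obstacle at the bad point afforded by the contradiction hypothesis. Once both envelope inequalities are in hand, Theorem \ref{Comparison Principle} closes the argument and the axioms ($\alpha$)--($\gamma$) of Definition 2.2 are verified for $w$.
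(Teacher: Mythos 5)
Your global strategy coincides with the paper's: sandwich $w_{*,i}\leq w_i\leq w_i^{*}$, prove that $w^{*}$ is a subsolution and $w_{*}$ a supersolution, and invoke Theorem \ref{Comparison Principle} to force $w_i^{*}\leq w_{*,i}$ on $[0,L)\times\bar{\Omega}$, hence equality, continuity, and the viscosity-solution axioms. The treatment of the initial condition via the barriers of Proposition \ref{existence of barriers} and Claim \ref{bound_wi} is also in line with the paper. However, there is a genuine error in the core Perron step.

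The contradiction argument you propose for the \emph{subsolution} property of $w^{*}$ cannot work. If $(\alpha,p,X)\in\bar{J}^{2,+}w_i^{*}(\bar{y},\bar{x})$ with $F(\bar{y},\bar{x},w_i^{*}(\bar{y},\bar{x}),p,X)>0$ and $w_i^{*}(\bar{y},\bar{x})-\mathcal{M}_i w^{*}(\bar{y},\bar{x})>0$, then the associated quadratic touches $w_i^{*}$ from \emph{above}, and by continuity of $F$ it is a strict classical \emph{supersolution} of the equation near $(\bar{y},\bar{x})$; membership in $\mathcal{F}$ requires $\min\{F,\,u_i-\mathcal{M}_iu\}\leq 0$, which is exactly what strict positivity of both quantities rules out. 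So the ``bumped-up'' function is not a subsolution and cannot be inserted into $\mathcal{F}$; the contradiction hypothesis gives you no mechanism to build a larger admissible competitor. The correct (and the paper's) argument for this half is not by contradiction at all: it is the stability of the subsolution property under upper envelopes of suprema. One takes, by definition of $w_i^{*}$, a sequence of subsolutions $u^{n}\in\mathcal{F}$ with $(y_n,x_n,u_i^{n}(y_n,x_n))\to(\hat{y},\hat{x},w_i^{*}(\hat{y},\hat{x}))$, applies Proposition 4.3 of \cite{CIL} to produce jets $(p_n,X_n)\in\bar{J}^{2,+}u_i^{n}(\hat{y}_n,\hat{x}_n)$ converging to $(p,X)$, and passes to the limit in $\min\{F(\cdot),\,u_i^{n}-\mathcal{M}_iu^{n}\}\leq 0$, handling the coupled term $\mathcal{M}_i$ through a $\limsup$ estimate against $\mathcal{M}_iw^{*}$. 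The genuine bump-by-contradiction construction belongs to the \emph{supersolution} half: if $w_{*,i_o}$ fails the supersolution property at a subjet, one bumps \emph{up} (not down, as you write) by a small perturbation of the touching quadratic, glues with $w^{*}$ via a maximum on a small set $Q_R$, verifies the new vector function is still a subsolution in $\mathcal{F}$ (this is where the interaction with $\mathcal{M}_j$ for $j\neq i_o$ must be controlled, as you correctly anticipate), and contradicts the maximality of $w$. As written, your proposal has the two halves interchanged, and the subsolution half would fail.
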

	\begin{proof}
		Let $ i\in\{1,2,\dots,m\} $ be a random index. From $ (\ref{perron_1}) $ and $ (\ref{perron_2}) $ we receive that
		\begin{gather}\label{visco_1}
			w_{*,i}\leq w_i \leq w^*_{i},\ \text{on}\ [0,L]\times\bar{\Omega}.
		\end{gather} 
		The main idea, is to prove that  $ w_{*,i}  $ is a \textit{supersolution} of (BVP) and $ w^*_{i} $ is a \textit{subsolution} of (BVP). Then, from the application of Comparison Principle, we receive that 
		\begin{gather}\label{visco_2}
			w^*_{i}\leq w_{*,i}\ \text{on}\ [0,L)\times\bar{\Omega}.
		\end{gather}
		Combining $ (\ref{visco_1}) $ and $ (\ref{visco_2}) $, we obtain that $ w_i=w^*_{i}=w_{*,i},\ \text{on}\ [0,L)\times\bar{\Omega}. $ From the last equality, we conclude that the function $ w_i:[0,L]\times\bar{\Omega}\to\R $  satisfies the axioms of viscosity solution. \\
		
		First, we establish the subsolution property of $ w^*_{i}. $
		\begin{itemize}
			\item We proceed to the proof that $ w^{*}_i(0,x) \leq g_i(x),\ \text{for all\ } x\in\bar{\Omega}.$\\
			From the Claim $ (\ref{bound_wi}) $, we receive that $ \text{for all\ }\epsilon\in(0,\epsilon_o],\ \text{for all\ }(y,x)\in[0,L)\times\bar{\Omega},\ w_i(y,x)\leq V_i^{i,\hat{x},\epsilon}(y,x). $ The function $ V_i^{i,\hat{x},\epsilon},\ \epsilon\in(0,\epsilon_o], $ as continuous on $ [0,L]\times\bar{\Omega}, $ is upper semicontinuous on that set. In the case that, there exists at least one $ \epsilon^*\in(0,\epsilon_o] $ and a point $ \tilde{x}\in\bar{\Omega}$ such that $ w_i(L,\tilde{x})>V_i^{i,\hat{x},\epsilon^*}(L,\tilde{x}) $, then we can do an appropriate modification\footnote{We set as a modification of function $ V_i^{i,\hat{x},\epsilon^*} $ the function $ \tilde{V}_i^{i,\hat{x},\epsilon^*}:[0,L]\times\bar{\Omega}\to\R $ defined as\\ $ \tilde{V}_i^{i,\hat{x},\epsilon^*}(y,x) = 
				\begin{cases}
					V_i^{i,\hat{x},\epsilon^*}(t,x) &\quad (y,x)\in[0,T)\times\bar{\Omega}\\
					M_{\epsilon_o} & y=L,\ x\in\bar{\Omega} \\
				\end{cases} $, where $ M_{\epsilon_o}=\max_{(y,x)\in [0,L]\times\bar{\Omega}} V_i^{i,\hat{x},\epsilon_o}(y,x), $ which also satisfies \\ $  M_{\epsilon_o}>=\max_{(y,x)\in [0,L]\times\bar{\Omega}} V_i^{i,\hat{x},\epsilon}(y,x),\ \text{for all\ }\epsilon\in(0,\epsilon_o].  $ Then, the function $ \tilde{V}_i^{i,\hat{x},\epsilon^*} $ remains upper semicontinuous on its domain and satisfies at the same time $ \tilde{V}_i^{i,\hat{x},\epsilon^*}\geq V_i^{i,\hat{x},\epsilon*} $ on $ [0,L]\times\bar{\Omega},\ \text{for all\ }\epsilon>0 $.} of function $ V_i^{i,\hat{x},\epsilon^*} $, let this modification denoted as $\tilde{V}_i^{i,\hat{x},\epsilon^*} $ which remains upper semicontinuous on $ [0,L]\times\bar{\Omega} $ and satisfies at the same time $ w_i(y,x)\leq \tilde{V}_i^{i,\hat{x},\epsilon^*}(y,x),\ \text{for all\ } (y,x) \in [0,L]\times\bar{\Omega}. $ Then, from the relation $ (\ref{perron_2}) $, the following holds: 
			\begin{gather}
				w_i^{*}(y,x)\leq \left( \tilde{V}_i^{i,\hat{x},\epsilon^*}\right) ^*(y,x),\ \text{for all\ }(y,x)\in[0,L]\times\bar{\Omega}.
			\end{gather}
			Specifically, for $ \hat{x}\in\bar{\Omega} $ randomly selected, we receive that
			$$w_i^{*}(0,\hat{x})\leq \left(\tilde{V}_i^{i,\hat{x},\epsilon^*}\right)^* (0,\hat{x})=\left( V_i^{i,\hat{x},\epsilon^*}\right)^* (0,\hat{x})\stackrel{V_i^{i,\hat{x},\epsilon^*}\text{continuous} }{=}V_i^{i,\hat{x},\epsilon^*}(0,\hat{x}),$$
			where the second inequality is a known result\footnote{Let $ (X,\rho)  $ be a metric space, $ A\subset X,\ x_o\in A $ and $ f,g:A\to\R $ mappings. If there exists $ \delta_0>0, $ such that\\ $ \text{for all\ } x\in B_{\rho}\left(x_o,\delta_0 \right)\cap A,\ f(x)=g(x)  $ then, $ f^*(x_o)=g^*(x_o) $ and $ f_*(x_o)=g_*(x_o) $}. In conclusion the above pathological case, is solved. For the rest $ \epsilon\in(0,\epsilon_o],  $ which satisfy the inequality $ w_i(y,x)\leq V_i^{i,\hat{x},\epsilon}(y,x), \\ \text{for all\ } (y,x)\in[0,L]\times\bar{\Omega} $, we follow a similar procedure to prove that $ w^*_i(0,\hat{x})\leq V_i^{i,\hat{x},\epsilon}(0,\hat{x}) $. In specific, from the last inequality, from the relation $ (\ref{perron_2}) $, the following holds
			\begin{gather}
			w_i^{*}(y,x)\leq \left(V_i^{i,\hat{x},\epsilon}\right) ^*(y,x),\ \text{for all\ }(y,x)\in[0,L]\times\bar{\Omega}.
			\end{gather} 
		Then for the previous selected $ \hat{x}\in\bar{\Omega} $, we receive that 
		$$w_i^{*}(0,\hat{x})\leq \left(V_i^{i,\hat{x},\epsilon}\right)^* (0,\hat{x})=\stackrel{V_i^{i,\hat{x},\epsilon}\text{continuous} }{=}V_i^{i,\hat{x},\epsilon}(0,\hat{x}).$$
		Consequently, from the above analysis, we receive that 
		$$w_i^{*}(0,\hat{x})\leq V_i^{i,\hat{x},\epsilon}(0,\hat{x}),\ \text{for all\ }\epsilon\in(0,\epsilon_o].$$
		From the above inequality, we conclude that the real number $ w_i^{*}(0,\hat{x})  $ is a lower bound of the set $ \{V_i^{i,\hat{x},\epsilon}(0,\hat{x})\ : \ \epsilon\in(0,\epsilon_o] \} $. Consequently, from the definition of infimum, we obtain that 
			\begin{gather}
				w_i^{*}(0,\hat{x})\leq\inf_{\epsilon\in(0,\epsilon_o]}V_i^{i,\hat{x},\epsilon}(0,\hat{x})=\inf_{\epsilon>0}V_i^{i,\hat{x},\epsilon}(0,\hat{x})\stackrel{\text{Proposition}\ \ref{existence of barriers}}{=}g_i(\hat{x})
			\end{gather}
		where the $ \inf_{\epsilon\in(0,\epsilon_o]}V_i^{i,\hat{x},\epsilon}(0,\hat{x})=\inf_{\epsilon>0}V_i^{i,\hat{x},\epsilon}(0,\hat{x}) $ holds since the function $ V_i^{i,\hat{x},\epsilon}(0,\hat{x}) $ is of the form $ V_i^{i,\hat{x},\epsilon}(0,\hat{x}) =h+\epsilon $, where $ h $ is a constant number. 
		Because $ \hat{x}\in\bar{\Omega} $ was a random point, the desired inequality is established.
			\item We proceed to the proof that $\text{for all\ } \left(y,x \right)\in(0,L)\times\Omega,\text{for all\ }\left(\alpha, p,X \right)\in \bar{J}^{2,+}w^{*}_i(y,x),$
			$$\min\{F\left(y,x,w_i^{*}(y,x),p,X\right),w^{*}_i(y,x)-\mathcal{M}_i\textbf{w}^{*}(y,x)\} \leq 0.$$
			Let $ (\hat{y},\hat{x})\in(0,L)\times\Omega $ be a random point, and $ (p,X)\in \bar{J}^{2,+}w^*_i(\hat{y},\hat{x}). $ By the definition of $ w_i^*, $ there exists a sequence
			\begin{gather}\label{convergence of envelope}
				(y_n,x_n,u_i^n\left(y_n,x_n \right) ) \xrightarrow{n\to\infty} \left(\hat{y},\hat{x},w^*_i(\hat{y},\hat{x})\right)
			\end{gather}
			where $ (y_n,x_n)\in[0,L]\times\bar{\Omega}  $ and $ \left( u_i^n \right)_{n\in\N}  $ an appropriate sequence of subsolutions of $ (BVP) $. Furthermore, if $ z_n=(\tilde{y}_n,\tilde{x}_n) $ is a sequence of points in $ [0,L]\times\bar{\Omega} $ such that\\ $ z_n\xrightarrow{n\to\infty} (\tilde{y}_o,\tilde{x}_o)\in[0,L]\times\bar{\Omega}, $ then $ \limsup_{n\to\infty}u_i^n(z_n)\leq w^*_i(\tilde{y}_o,\tilde{x}_o). $ Indeed, due to the fact that $ w^*_i $ is upper semicontinuous function, then by the sequential criterion, we receive that 
			\begin{gather}
				\limsup_{n\to\infty} w^*_i(z_n)\leq w^*_i(\tilde{y}_o,\tilde{x}_o)\label{sequential_criterion_w^*_i}.
			\end{gather}
			Moreover, by the definition of $ w^*_i, $ and $ w_i $ it follows that 
			\begin{gather}
				\text{for all\ } n\in\N,\ w^*_i(z_n)\geq w_i(z_n),\  w_i(z_n)\geq u_i^n(z_n).
			\end{gather}
			 From the last inequalities, we receive
			\begin{gather}
				\limsup_{n\to\infty} w^*_i(z_n)\geq\limsup_{n\to\infty} w_i(z_n)\label{perron_condition_1}\\
				\limsup_{n\to\infty} w_i(z_n)\geq\limsup_{n\to\infty} u_i^n(z_n)\label{perron_condition_2}.
			\end{gather}
			By the combination of $ (\ref{sequential_criterion_w^*_i}),\ (\ref{perron_condition_1}) $ and $ (\ref{perron_condition_2}) $ it is extracted, that $ \limsup_{n\to\infty}u^n_i(z_n)\leq w^*_i(\tilde{y}_o,\tilde{x}_o). $ Then, we get from Proposition 4.3 of $ \cite{CIL} $ that there exists a sequence $ (\hat{y}_n,\hat{x}_n)\in [0,L]\times\bar{\Omega} $ and \\$ (p_n,X_n)\in \bar{J}^{2,+}u^n_i(\hat{y}_n,\hat{x}_n) $ such that,
			\begin{gather}
				\left(\hat{y}_n,\hat{x}_n,u^n_i(\hat{y}_n,\hat{x}_n),p_n,X_n \right) \xrightarrow{n\to\infty}\left( \hat{y},\hat{x},w^*_i(\hat{y},\hat{x}),p,X\right). 
			\end{gather}
		Because the point $ (\hat{y},\hat{x}) $ is an internal point of $ (0,L)\times\Omega, $ thus there exists a $ m_o\in\N $ such that,\\ $ \text{for all\ } n\geq m_o, (\hat{y}_n,\hat{x}_n)\in (0,L)\times\Omega $. Also, from the continuity of $ F $, we receive that 
			\begin{gather}\label{continuity_of_F}
				F\left(\hat{y}_n,\hat{x}_n,u^n_i(\hat{y}_n,\hat{x}_n),p_n,X_n \right) \xrightarrow{n\to\infty} F\left( \hat{y},\hat{x},w^*_i(\hat{y},\hat{x}),p,X\right).
			\end{gather}
		From the fact that $ w_j^* $ is an upper semicontinuous function, we receive that 
		\begin{gather}\label{w_j_estimation}
			w_j^*(\hat{y},\hat{x})\geq\limsup_{n\to\infty} w_j^*(\hat{y}_n,\hat{x}_n) =\limsup_{n\to\infty} w_j^*(\hat{y}_{n+m_o},\hat{x}_{n+m_o}).
		\end{gather}
		Then, we have that
		\begin{align}
			\max_{j\neq i}\left( w_j^*(\hat{y},\hat{x})-c_{i,j}(\hat{y},\hat{x})\right) &\geq^{(\ref{w_j_estimation})}\max_{j\neq i}\left(\limsup_{n\to\infty} w_j^*(\hat{y}_{n+m_o},\hat{x}_{n+m_o})-c_{i,j}(\hat{y},\hat{x})\right)\nonumber\\
			&\stackrel{(c_{i,j}\ contin.)}{=}\max_{j\neq i}\left(\limsup_{n\to\infty}\left(  w_j^*(\hat{y}_{n+m_o},\hat{x}_{n+m_o})-c_{i,j}(\hat{y}_{n+m_o},\hat{x}_{n+m_o})\right)\right) \nonumber\\
			&\geq\limsup_{n\to\infty}\max_{j\neq i}\left( w_j^*(\hat{y}_{n+m_o},\hat{x}_{n+m_o})-c_{i,j}(\hat{y}_{n+m_o},\hat{x}_{n+m_o})\right) \nonumber\\
			&\geq\limsup_{n\to\infty}\left( \max_{j\neq i}\left( u_j^{n+m_o}(\hat{y}_{n+m_o},\hat{x}_{n+m_o})-c_{i,j}(\hat{y}_{n+m_o},\hat{x}_{k_n+m_o})\right) \right). 
		\end{align}
		From the last inequality, it follows that $ \text{for all\ } n\in\N, $
		\begin{align}
		a_n:&=u^{n+m_o}_i(\hat{y}_{n+m_o},\hat{x}_{n+m_o})-\max_{j\neq i}\left( w_j^*(\hat{y},\hat{x})-c_{i,j}(\hat{y},\hat{x})\right)\nonumber\\
		& \leq u^{n+m_o}_i(\hat{y}_{n+m_o},\hat{x}_{n+m_o})-\nonumber\\
		&-\limsup_{n\to\infty}\left( \max_{j\neq i}\left( u_j^{n+m_o}(\hat{y}_{n+m_o},\hat{x}_{n+m_o})-c_{i,j}(\hat{y}_{n+m_o},\hat{x}_{n+m_o})\right)\right) :=b_n\nonumber.
		\end{align}
	Thus, $ \text{for all\ } n\in\N,\ a_n\leq b_n $ and as a result from the last, we conclude that $ \lim_{n\to\infty}a_n\leq\lim_{n\to\infty}b_n $. The last inequality, it is true, since from the expressions of the sequences $ (a_n)_{n\in\N} $ and $ (b_n)_{n\in\N}, $ it is clear that their limits exists. If we translate the last limit inequality, we observe that
	\begin{align}
		& w_i^*(\hat{y},\hat{x})-\max_{j\neq i}\left( w^*_j(\hat{y},\hat{x})-c_{i,j}(\hat{y},\hat{x})\right)\nonumber\\
		& \leq w_i^*(\hat{y},\hat{x})-\limsup_{n\to\infty}\left(\max_{j\neq i}\left( u^{n+m_o}_j(\hat{y}_{n+m_o},\hat{x}_{n+m_o}) -c_{i,j}(\hat{y}_{n+m_o},\hat{x}_{n+m_o})\right)  \right) \nonumber.
	\end{align}
From the definition of $ \limsup_{n\to\infty}, $ there exists a strictly increasing sequence of natural numbers $ (k_n)_{n\in\N} $, such that
\begin{align}
	&\limsup_{n\to\infty}\left(\max_{j\neq i}\left( u^{n+m_o}_j(\hat{t}_{n+m_o},\hat{x}_{n+m_o}) -c_{i,j}(\hat{y}_{n+m_o},\hat{x}_{n+m_o})\right)  \right)\nonumber\\ &=\lim_{n\to\infty}\max_{j\neq i}\left(u_j^{k_n+m_o} \left( \hat{y}_{k_n+m_o},\hat{x}_{k_n+m_o}\right) -c_{i,j}\left( \hat{y}_{k_n+m_o},\hat{x}_{k_n+m_o}\right) \right) \nonumber.
\end{align}
	At the same time, we observe that
	\begin{gather}
		w^*_i(\hat{y},\hat{x})=\lim_{n\to\infty}u^n_i(\hat{y}_n,\hat{x}_n)=\lim_{n\to\infty} u^{k_n+m_o}_i(\hat{y}_{k_n+m_o},\hat{x}_{k_n+m_o}).
	\end{gather}
Combining the last two limit equalities, we receive that
\begin{align}
	& w_i^*(\hat{y},\hat{x})-\max_{j\neq i}\left( w^*_j(\hat{y},\hat{x})-c_{i,j}(\hat{y},\hat{x})\right)\nonumber\\ 
	&\leq \lim_{n\to\infty}\left(u^{k_n+m_o}_i(\hat{y}_{k_n+m_o},\hat{x}_{k_n+m_o})- \max_{j\neq i}\left(u_j^{k_n+m_o} \left( \hat{y}_{k_n+m_o},\hat{x}_{k_n+m_o}\right) -c_{i,j}\left( \hat{y}_{k_n+m_o},\hat{x}_{k_n+m_o}\right) \right) \right) \nonumber
\end{align}
thus,
	\begin{align}
		&\min\left\lbrace F\left( \hat{y},\hat{x},w^*_i(\hat{y},\hat{x}),p,X\right) ,w_i^*(\hat{y},\hat{x})-\max_{j\neq i}\left( w^*_j(\hat{y},\hat{x})-c_{i,j}(\hat{y},\hat{x})\right)\right\rbrace \nonumber\\
		&\leq\min\biggl\{ F\left( \hat{y},\hat{x},w^*_i(\hat{y},\hat{x}),p,X\right) ,\lim_{n\to\infty}\biggl(u^{k_n+m_o}_i(\hat{y}_{k_n+m_o},\hat{x}_{k_n+m_o})-\nonumber\\ 
		&\qquad\qquad-\max_{j\neq i}\left(u_j^{k_n+m_o} \left( \hat{y}_{k_n+m_o},\hat{x}_{k_n+m_o}\right) - c_{i,j}\left( \hat{y}_{k_n+m_o},\hat{x}_{k_n+m_o}\right)\right)\biggl)\biggl\} \nonumber\\
		&=^{(\ref{continuity_of_F})}\min\biggl\{ \lim_{n\to\infty}F(\hat{y}_{k_n+m_o},\hat{x}_{k_n+m_o},u^{k_n+m_o}_i(\hat{t}_{k_n+m_o},\hat{x}_{k_n+m_o}),p_{k_n+m_o},X_{k_n+m_o}),\nonumber\\ &\quad\lim_{n\to\infty}\biggl(u^{k_n+m_o}_i(\hat{y}_{k_n+m_o},\hat{x}_{k_n+m_o})-\max_{j\neq i}\left(u_j^{k_n+m_o} \left( \hat{y}_{k_n+m_o},\hat{x}_{k_n+m_o}\right) - c_{i,j}\left( \hat{y}_{k_n+m_o},\hat{x}_{k_n+m_o}\right) \right) \biggl)  \biggl\}\nonumber\\
		&=\lim_{n\to\infty} \min\biggl\{ F\left( \hat{y}_{k_n+m_o},\hat{x}_{k_n+m_o},u^{k_n+m_o}_i(\hat{y}_{k_n+m_o},\hat{x}_{k_n+m_o}),p_{k_n+m_o},X_{k_n+m_o}\right) ,\nonumber\\
		&\qquad u^{k_n+m_o}_i(\hat{t}_{k_n+m_o},\hat{x}_{k_n+m_o}) 
		-\max_{j\neq i}\left(u_j^{k_n+m_o} \left( \hat{y}_{k_n+m_o},\hat{x}_{k_n+m_o}\right) - c_{i,j}\left( \hat{y}_{k_n+m_o},\hat{x}_{k_n+m_o}\right) \right)\biggl\}\leq 0
	\end{align}
where the last inequality is true, since the sequence of functions $ \left( u^n_i\right)_{n\in\N}  $ are subsolutions of $ (BVP). $ Consequently, the desired inequality is established.

		\item We proceed to the proof that $ \text{for all\ } (y,x)\in(0,L)\times\partial{\Omega},\ w_i^*(y,x)\leq f_i(y,x) $\\
		Let $ (\hat{y},\hat{x})\in(0,L)\times\partial{\Omega} $ a random point. Similarly, as before, by the definition of $ w^*_i $ there exists a sequence 
		\begin{gather}
			\left( y_n,x_n,u^n_i(y_n,x_n)\right)\xrightarrow{n\to\infty}\left( \hat{y},\hat{x},w^*_i(\hat{y},\hat{x}) \right)  
		\end{gather}
		 where $ \left( y_n,x_n\right)\in(0,L)\times\partial{\Omega}  $ and $ u^n_i $ appropriate sequence of subsolution of $ (BVP) $. Since $  u^n_i $ are subsolutions, we receive that $ \text{for all\ } n\in\N $ 
		 \begin{gather}
		 	u^n_i(y_n,x_n) \leq f_i(y_n,x_n)
		 \end{gather}
		 consequently, by taking $ n\to\infty $, we receive the desired inequality.
		\end{itemize}
		In particular, since $ w^{*} $ is a subsolution of $ (BVP), $ then by the definition of $ w $, it is extracted that $ w^{*}\leq w, $ on $ [0,L]\times\bar{\Omega}.  $ Also from $ (\ref{visco_1}), $ we have $ w\leq w^{*} $ on $ [0,L]\times\bar{\Omega}. $ From the last, we conclude that $ w=w^{*} $ on $ [0,L]\times\bar{\Omega}. $\\
		
		Next we prove that $ w_{*} $ is a supersolution of $ (BVP), $ following a classical argument by contradiction. In specific, we show that, if $ w_{*} $ is not a supersolution, then there exists a subsolution strictly greater than $ w^{*}, $ where the last function, is equal with $ w. $ This is a contradiction, by the definition of $ w. $\\
		
		First, we notice that, $ w_{*}  $ satisfies $ w_{*,i}(0,x)\geq g_i(x),\ \text{for all\ } x\in\bar{\Omega}. $ Indeed, by the definition of $ w_i $, we have that $ w_i(y,x)\geq U_i^{\hat{x},\epsilon}(y,x),\ \text{for all\ } (y,x)\in[0,L]\times\bar{\Omega},\ \text{for all\ }\epsilon>0. $ Due to the fact that $ U_i^{\hat{x},\epsilon} $ is continuous on $ [0,L]\times\bar{\Omega}, $ it is extracted that the specific function lower semicontinuous on $  [0,L]\times\bar{\Omega}. $ From $ (\ref{perron_1}) $, we receive that $ w_{*,i}(y,x)\geq U_i^{\hat{x},\epsilon}(y,x) $ on $ [0,L]\times\bar{\Omega},\ \text{for all\ }\epsilon>0. $ In particular, we get that $ w_{*,i}(0,\hat{x})\geq U_i^{\hat{x},\epsilon}(0,\hat{x}),\ \text{for all\ }\epsilon>0.  $ From the last we conclude that the real number $ w_{*,i}(0,\hat{x})(0,\hat{x}) $ is an upper bound of the set $ \{U_i^{\hat{x},\epsilon}(0,\hat{x})\ :\  \epsilon>0\}. $ Consequently,
		\begin{gather}
			w_{*,i}(0,\hat{x})(0,\hat{x})\geq\sup_{\epsilon>0}U_i^{\hat{x},\epsilon}(0,\hat{x})\stackrel{   \text{Proposition}\ \ref{existence of barriers}}{=}g_i(\hat{x}).
		\end{gather}
	Assume now that  $ w_{*,i} $ is not a supersolution $ (BVP). $ Then, there exists at least one index\\ $ i_o\in\{1,2,\dots,m\} $ such that the function $ w_{*,i_o} $ does not satisfy the supersolution property. Then there are two main possible cases. 
	\begin{itemize}
		\item $ \text{There exists\ } (\hat{y},\hat{x})\in(0,L)\times\Omega $ and $ (\alpha_o,p_o,X_o) \in \bar{J}^{2,-}w_{*,i_o}(\hat{y},\hat{x})$, such that
		\begin{gather}\label{main_case_w_*}
			\min\{F\left(\hat{y},\hat{x},w_{*,i_o}(\hat{y},\hat{x}),p_o,X_o \right),w_{*,i_o}(\hat{y},\hat{x}) -\mathcal{M}_{i_o}w_{*}(\hat{y},\hat{x})\}<0
		\end{gather}
	For $ \delta\in(0,\delta_o), $ let $ \beta:(0,\delta_o)\to\R $ be a random real positive function with the identity $ \lim_{\delta\to 0}\beta(\delta)=0. $ Furthermore, let $ h:[0,L]\times\bar{\Omega}\times(0,\delta_o)\to\R $ be a random function, such that the following identities hold:
	\begin{itemize}
		\item $ \lim_{(\delta,y,x)\to(0^+,\hat{y},\hat{x})}D_x h(y,x,\delta)=0 $ and  $ \lim_{(\delta,y,x)\to(0^+,\hat{y},\hat{x})}D^2_{xx} h(y,x,\delta)=0 $ \nonumber
		\item $h$ is continuous and bounded on its domain
		\item  $ \text{for all\ }\delta\in(0,\delta_o),\ \text{for all\ } V\subset [0,L]\times\bar{\Omega}\ \text{bounded}, \sup_{(y,x)\in V}h(y,x,\delta)>0$\nonumber.
	\end{itemize}
	 Gradually, during the following proof, we will demand extra conditions for functions $ \beta $ and $ h. $ 
	 Now, we construct the following family of functions \\ $ \tilde{w}^{\delta}:[0,L]\times\bar{\Omega}\to\R^m $, where $ \tilde{w}^{\delta}=\left( \tilde{w}^{\delta}_1,\tilde{w}^{\delta}_2,\dots, \tilde{w}^{\delta}_m\right)  $ with
	\begin{align}
		\tilde{w}^{\delta}_{i}(y,x):=&w_{*,i}(\hat{y},\hat{x})+ h(y,x,\delta)\beta(\delta)+\alpha_o (y-\hat{t})+\left\langle p_o,(x-\hat{x}) \right\rangle+\nonumber\\ 
		&+\frac{1}{2}\left\langle X_o\ (x-\hat{x}), (x-\hat{x})\right\rangle,\ i\in\left\lbrace1,2,\dots,m \right\rbrace\nonumber.
	\end{align}
For each $ i\in\left\lbrace 1,2,\dots,m\right\rbrace  $ fixed, we define the function $ S_i(\delta,y,x):= \tilde{w}^{\delta}_{i}(y,x).  $ We observe that 
\begin{gather}
	\lim_{(\delta,y,x)\to(0^+,\hat{y},\hat{x})} S_i(\delta,y,x)=w_{*,i}(\hat{y},\hat{x})\nonumber\\
	\text{and}\ \lim_{(\delta,y,x)\to(0^+,\hat{y},\hat{x})} D_x S_i(\delta,y,x)=\lim_{(\delta,y,x)\to(0^+,\hat{y},\hat{x})} D_x\tilde{w}^{\delta}_i(y,x)=p_o\nonumber\\
	\lim_{(\delta,y,x)\to(0^+,\hat{y},\hat{x})} D_{xx}^2 S_i(\delta,y,x)=\lim_{(\delta,y,x)\to(0^+,\hat{y},\hat{x})} D_{xx}^2\tilde{w}^{\delta}_i(y,x)=X_o.
\end{gather}

\begin{claim}\label{claim_subsolution property}
	 There exists sufficiently small positive parameters,  $ \delta^*\in(0,\delta_o),$ and $ R\in(0,R_o], $  such that the function $ \tilde{w}_{i_o}\equiv\tilde{w}^{\delta^*}_{i_o}:[0,L]\times\bar{\Omega}\to\R, $ satisfies the subsolution property of
	\begin{gather}
		\min\{F(y,x,\tilde{w}_{i_o}(y,x),D_x \tilde{w}_{i_o}(y,x),D^2_{xx} \tilde{w}_{i_o}(y,x)), \tilde{w}_{i_o}(y,x)-\mathcal{M}_{i_o} w^*(y,x)\}=0
	\end{gather}
 	in $ Q_R:=\left\lbrace (y,x)\in[0,L]\times\bar{\Omega}\ :\  \abs{y-\hat{y}}+\abs{x-\hat{x}}^2\leq R\right\rbrace $ and also $ \tilde{w}_{i_o}\leq M $ on $ Q_R. $ 
\end{claim}

\begin{proof}
	Initially, we observe that
	\begin{align}
		\mathcal{M}_{i_o}w_*(\hat{t},\hat{x})&=\max_{\lambda\neq i_o}\left( w_{*,\lambda}(\hat{y},\hat{x}) -c_{i_o,\lambda}(\hat{y},\hat{x})\right)\nonumber\\
		&= \max_{\lambda\neq i_o}\left(\lim_{(\delta,y,x)\to(0^+,\hat{y},\hat{x})}\left( \tilde{w}^{\delta}_{\lambda}(y,x)-c_{i_o,\lambda}(y,x) \right)\right) \nonumber\\
		&=\lim_{(\delta,y,x)\to(0^+,\hat{y},\hat{x})}\max_{\lambda\neq i_o}\left(\tilde{w}^{\delta}_{\lambda}(y,x)-c_{i_o,\lambda}(y,x) \right)\nonumber\\
		&= \lim_{(\delta,y,x)\to(0^+,\hat{y},\hat{x})}\mathcal{M}_{i_o}\tilde{w}^{\delta}(y,x)
	\end{align} 
	thus, from the above, we receive
	\begin{gather}
	w_{*,i_o}\left(\hat{y},\hat{x}\right)-\mathcal{M}_{i_o}w_*(\hat{y},\hat{x})=\lim_{(\delta,y,x)\to(0^+,\hat{y},\hat{x})}\left(\tilde{w}^{\delta}_{i_o}(y,x)-\mathcal{M}_{i_o}\tilde{w}^{\delta}(y,x)\right). 
	\end{gather}
	Moreover, from continuity of $ F $, we have that
	\begin{gather}
		F\left(\hat{y},\hat{x},w_{*,i_o}(\hat{y},\hat{x}),p_o,X_o \right)=\lim_{(\delta,y,x)\to(0^+,\hat{y},\hat{x})}F\left(y,x,\tilde{w}^{\delta}_{i_o}(y,x),D_x\tilde{w}^{\delta}_{i_o}(y,x),D^2_{xx}\tilde{w}^{\delta}_{i_o}(y,x) \right).
	\end{gather}
	Considering the above limits, we have that
	\begin{align}
		0&>^{(\ref{main_case_w_*})}\min\left\lbrace 	F\left(\hat{y},\hat{x},w_{*,i_o}(\hat{y},\hat{x}),p_o,X_o \right), w_{*,i_o}(\hat{y},\hat{x})-\mathcal{M}_{i_o}w_*(\hat{y},\hat{x})\right\rbrace \nonumber\\
		&=\min\Biggl\{\lim_{(\delta,y,x)\to(0^+,\hat{y},\hat{x})}F\left(y,x,\tilde{w}^{\delta}_{i_o}(y,x),D_x\tilde{w}^{\delta}_{i_o}(y,x),D^2_{xx}\tilde{w}^{\delta}_{i_o}(y,x) \right),\nonumber\\ &\qquad\qquad\lim_{(\delta,y,x)\to(0^+,\hat{y},\hat{x})}\left( \tilde{w}^{\delta}_{i_o}(y,x)-\mathcal{M}_{i_o}\tilde{w}^{\delta}(y,x)\right) \Biggl\} \nonumber\\
		&=\lim_{(\delta,y,x)\to(0^+,\hat{y},\hat{x})}\min\Biggl\{ F\left(y,x,\tilde{w}^{\delta}_{i_o}(y,x),D_x\tilde{w}^{\delta}_{i_o}(y,x),D^2_{xx}\tilde{w}^{\delta}_{i_o}(y,x) \right),\nonumber\\ &\qquad\qquad\qquad\qquad\qquad\qquad\tilde{w}^{\delta}_{i_o}(y,x)-\mathcal{M}_{i_o}\tilde{w}^{\delta}(y,x)\Biggl\} 
	\end{align}
	consequently, there exists $ \tilde{\delta}\in(0,\delta_o)>0 $ and $ \tilde{R}\in(0,R_o], $ such that $ \text{for all\ }\delta\in(0,\tilde{\delta}) $ and \\$ \text{for all\ } (y,x)\in Q_{\tilde{R}}, $
	\begin{gather}
		\min\left\lbrace F\left(y,x,\tilde{w}^{\delta}_{i_o}(y,x),D_x\tilde{w}^{\delta}_{i_o}(y,x),D^2_{xx}\tilde{w}^{\delta}_{i_o}(y,x) \right), \tilde{w}^{\delta}_{i_o}(y,x)-\mathcal{M}_{i_o}\tilde{w}^{\delta}(y,x)\right\rbrace <0.
	\end{gather}
We now fix $ \delta^*\in(0,\tilde{\delta}) $, and from the above inquality it holds that, $ \text{for all\ } (y,x)\in Q_{\tilde{R}} $
\begin{gather}\label{important_relation_b}
	\min\left\lbrace F\left(y,x,\tilde{w}^{\delta^*}_{i_o}(y,x),D_x\tilde{w}^{\delta^*}_{i_o}(y,x),D^2_{xx}\tilde{w}^{\delta^*}_{i_o}(y,x) \right), \tilde{w}^{\delta^*}_{i_o}(y,x)-\mathcal{M}_{i_o}\tilde{w}^{\delta^*}(y,x)\right\rbrace <0.
\end{gather}
We claim now, that there exists a sufficently small  $ R\in(0,\tilde{R}) $ such that, $ \text{for all\ } (y,x)\in Q_R,$
	\begin{gather}
		\mathcal{M}_{i_o}\tilde{w}^{\delta^*}(y,x) \leq\mathcal{M}_{i_o}w^*(y,x).
	\end{gather}
Indeed, we first notice that
\begin{align}
\mathcal{M}_{i_o}\tilde{w}^{\delta^*}(y,x)&=\max_{\lambda\neq i_o}\left\lbrace\tilde{w}^{\delta^*}_{\lambda}(y,x)-c_{i_o,\lambda}(y,x) \right\rbrace\nonumber\\
&=\max_{\lambda\neq i_o}\{ w_{*,\lambda}(\hat{y},\hat{x})+ h(y,x,\delta^*)\beta(\delta^*)+\alpha_o (y-\hat{y})+\left\langle p_0, x-\hat{x}\right\rangle+\nonumber\\
&+\frac{1}{2}\left\langle X_o (x-\hat{x}),(x-\hat{x}) \right\rangle-c_{i_o,\lambda}(y,x) \}\nonumber\\
&\equiv\max_{\lambda\neq i_o}\left\lbrace K(y,x)-c_{i_o,\lambda}(y,x) \right\rbrace\label{important_relation_o}
\end{align}
where 
\begin{gather}
	K(y,x):=w_{*,\lambda}(\hat{y},\hat{x})+h(y,x,\delta^*)\beta(\delta^*)+\alpha_o (y-\hat{y})+\left\langle p_0, x-\hat{x}\right\rangle+\frac{1}{2}\left\langle X_o (x-\hat{x}),(x-\hat{x}) \right\rangle\nonumber.
\end{gather}
Due to the fact that, $ w_{*,\lambda} $ is lower semicontinuous at point $ (\hat{y},\hat{x}) $, for fixed $ \epsilon^*>0 $, there exists $ r_{\lambda}>0 $ such that, $ \text{for all\ }(y,x)\in B_{\rho_2}\left( (\hat{y},\hat{x}),r_{\lambda} \right)\cap \left( [0,L]\times\bar{\Omega}\right),   $
\begin{gather}
	w_{*,\lambda}(\hat{y},\hat{x})<w_{*,\lambda}(y,x)+\epsilon^*\leq w^*_{\lambda}(y,x)+\epsilon^*.
\end{gather}
	We define $ r_o:=\min\left\lbrace r_{\lambda}\ :\  \lambda\in\left\lbrace 1,2,\dots,m\right\rbrace\setminus\left\lbrace i_o \right\rbrace   \right\rbrace  $. Then $ \text{for all\ } (y,x)\in B_{\rho_2}\left( (\hat{y},\hat{x}),r_o\right) \cap \left( [0,L]\times\bar{\Omega}\right),  $
	\begin{gather}
		w_{*,\lambda}(\hat{y},\hat{x})\leq w^*_{\lambda}(y,x)+\epsilon^*.
	\end{gather}
	We select $ R\in (0,\tilde{R}),  $ such that $ Q_R\subset B_{\rho_2}\left( (\hat{y},\hat{x}),r_o\right) \cap \left( [0,L]\times\bar{\Omega}\right).  $
	From the above estimations, $\text{for all\ } \lambda\in\left\lbrace 1,2,\dots,m \right\rbrace\setminus \left\lbrace i_o \right\rbrace\ \text{and}\   \text{for all\ } (y,x)\in Q_R $
	\begin{align}
		K(y,x)&\leq w^*_{\lambda}(y,x)+\epsilon^*+ h(y,x,\delta^*)\beta(\delta^*)+\alpha_o (y-\hat{y})+\left\langle p_0, x-\hat{x}\right\rangle+\nonumber\\
		&+\frac{1}{2}\left\langle X_o (x-\hat{x}),(x-\hat{x}) \right\rangle\nonumber\\
		&\leq w^*_{\lambda}(y,x)+\epsilon^*+\sup\left\lbrace h(y,x,\delta^*)\ : \ \ (y,x)\in Q_R\right\rbrace  \beta(\delta^*)+\sup\big{\lbrace}\alpha_o (y-\hat{y})+\left\langle p_0, x-\hat{x}\right\rangle+\nonumber\\
		&+\frac{1}{2}\left\langle X_o (x-\hat{x}),(x-\hat{x}) \right\rangle\ : \ \ (y,x)\in Q_R \big{\rbrace}
	\end{align}
and we set 
\begin{gather}
	\Lambda:=\sup\big{\lbrace}\alpha_o (y-\hat{y})+\left\langle p_0, x-\hat{x}\right\rangle
	+\frac{1}{2}\left\langle X_o (x-\hat{x}),(x-\hat{x}) \right\rangle\ \ :\ \ (y,x)\in Q_R \big{\rbrace}\nonumber.
\end{gather}
The main target at this point, is to guarantee the following
\begin{gather}\label{negativity_rel}
	\epsilon^*+\sup\{h(y,x,\delta^*)\ : \ (y,x)\in Q_R\} +\Lambda\leq 0\
	\text{and}\ \beta(\delta^*) h(\hat{y},\hat{x},\delta^*)>0
\end{gather}
 setting appropriate extra conditions for functions $ h $ and $ \beta. $ The second condition will be needed in the proof of the following claim.\\
For this task, we have the following cases
\begin{itemize}
	\item Let $ \Lambda\leq 0 $. In this case, we can choose $ \epsilon^*>0 $ such that $ \epsilon^*+\Lambda<0 $ holds. Equivalently, $ 0<-\epsilon^*-\Lambda. $ In order to hold $ (\ref{negativity_rel}) $, it is sufficent to define the value of function $ \beta $ at point $ \delta
	^* $ in a way that the following holds
	\begin{gather}
		0<\beta(\delta^*)\leq\frac{-\epsilon^*-\Lambda}{\sup\{h(y,x,\delta^*)\ | \ (y,x)\in Q_R\}}\equiv k_o\ \text{and}\ h(\hat{y},\hat{x},\delta^*)>0.
	\end{gather}

	\item Let $ \Lambda> 0 $. In this case, we receive automatically that $ -\epsilon^*-\Lambda<0. $ Then again, we observe that, if we restrict the value of $ \beta $ at point $ \delta^* $ to be inside the interval $ (-\infty,k_o) $ and moreover demand $ h(\hat{y},\hat{x},\delta^*)<0 $, we conclude that $ (\ref{negativity_rel}) $ holds.
\end{itemize}
	From, both cases, we obtain that, $ \text{for all\ } (y,x)\in Q_R,\ K(y,x)\leq w^*_{\lambda}(y,x) $. As a result, we receive that
	\begin{align}
		\mathcal{M}_{i_o}\tilde{w}^{\delta^*}(y,x)&\leq^{(\ref{important_relation_o})}\max_{\lambda\neq i_o}\left\lbrace K(y,x)-c_{i_o,\lambda}(y,x) \right\rbrace\nonumber\\
		&\leq\max_{\lambda\neq i_o}\left\lbrace w^*_{\lambda}(y,x)-c_{i_o,\lambda}(y,x)\right\rbrace=\mathcal{M}_{i_o}w^*(y,x). 
	\end{align}
	Equivalently, we obtain $ \text{for all\ } (y,x)\in Q_R\subset Q_{\tilde{R}}, $ 
	\begin{gather}
		\tilde{w}^{\delta^*}_{i_o}(y,x)-\mathcal{M}_{i_o}w^*(y,x) \leq \tilde{w}^{\delta^*}_{i_o}(y,x)-\mathcal{M}_{i_o}\tilde{w}^{\delta^*}(y,x).
	\end{gather}
	From the last inequality combined with $ (\ref{important_relation_b}), $ we receive that, $ \text{for all\ } (y,x)\in Q_R $ that
	\begin{gather}
		\min\left\lbrace F\left(y,x,\tilde{w}^{\delta^*}_{i_o}(y,x),D_x\tilde{w}^{\delta^*}_{i_o}(y,x),D^2_{xx}\tilde{w}^{\delta^*}_{i_o}(y,x) \right), \tilde{w}^{\delta^*}_{i_o}(y,x)-\mathcal{M}_{i_o}w^{*}(y,x)\right\rbrace <0.
	\end{gather}
	In conclusion, the function $ \tilde{w}_{i_o}:=\tilde{w}^{\delta^*}_{i_o} $ satisfies in the classical sense the subsolution property.

\end{proof}

Next we define the following function $ \hat{u}:=(\hat{u}_1,\hat{u}_2,\dots,\hat{u}_m):[0,L]\times\bar{\Omega}\to\R^m $, where 
\begin{gather}
	\hat{u}_{i_o}(y,x):=\begin{cases}
		\max\{w^*_{i_o}(y,x),\tilde{w}_{i_o}(y,x)\},\ & \text{ if }\ (y,x)\in Q_R\nonumber\\
		w^*_{i_o}(y,x),\ & \text{otherwise} 
	\end{cases}
 \ \text{and}\ \hat{u}_{j}(y,x):=w^*_{j}(y,x),\ \text{if}\ j\neq i_o.
\end{gather}
\begin{claim}
The function $ \hat{u}_{i_o}:[0,L]\times\bar{\Omega}\to\R $ is upper semicontinuous.
\end{claim}
\begin{proof}
	 First, we notice that $ Q_R $ is a closed subset of $ \R\times\R^n $ and as a result, of the closedness of $ [0,L]\times\bar{\Omega} $, we receive that $ Q_R $ is closed in the metric subspace $ [0,L]\times\bar{\Omega}. $ Let $ (\tilde{y},\tilde{x})\in[0,L]\times\bar{\Omega} $ be a random point. There are three cases: $ (\tilde{y},\tilde{x})\in int\left( Q_R \right) $ or $ (\tilde{y},\tilde{x})\in \partial Q_R   $ or \\ $ (\tilde{y},\tilde{x})\in K:=\left(  [0,L]\times\bar{\Omega}\right) \setminus Q_R. $ 
	 \begin{itemize}
	 	\item Let $  (\tilde{y},\tilde{x})\in int\left( Q_R\right). $ Since $ int\left( Q_R\right) $ is an open set, then there exists $ \tilde{\delta}>0 $ such that, $ B_{\rho_2}\left((\tilde{y},\tilde{x}),\tilde{\delta} \right)\subset int(Q_R).  $ For a random $ \epsilon>0, $ due to the fact that $ w^*_{i_o} $ is upper semicontinuous, there exists $\delta_1>0,  $ such that $ \text{for all\ } (y,x)\in B_{\rho_2}\left( (\tilde{y},\tilde{x}),\delta_1\right)\cap\left([0,L]\times\bar{\Omega} \right), $
	 	\begin{gather}
	 		w_{i_o}^*(y,x)<w^*_{i_o}(\tilde{y},\tilde{x})+\epsilon \leq\hat{u}_{i_o} (\tilde{y},\tilde{x})+\epsilon.
	 	\end{gather}
	 	Moreover, since $ \tilde{w}_{i_o} $ is continuous at $ (\tilde{y},\tilde{x}) $, it will be also an upper semicontinuous on that point. As a result, there exists $ \delta_2>0 $ such that, $ \text{for all\ } (y,x)\in B_{\rho_2}\left( (\tilde{y},\tilde{x}),\delta_2\right)\cap\left([0,L]\times\bar{\Omega} \right), $
	 	\begin{gather}
	 		\tilde{w}_{i_o}(y,x)<\tilde{w}_{i_o}(\tilde{y},\tilde{x})+\epsilon\leq\hat{u}_{i_o} (\tilde{y},\tilde{x})+\epsilon.
	 	\end{gather}
 		Then, setting $ \delta_o:=\min\{\tilde{\delta},\delta_1,\delta_2\}>0 $ we see that, \\$ \text{for all\ }(y,x)\in B_{\rho_2}\left( (\tilde{y},\tilde{x}),\delta_o\right)=B_{\rho_2}\left( (\tilde{y},\tilde{x}),\delta_o\right)\cap \left( [0,L]\times\bar{\Omega}\right),   $
 		\begin{gather}
 			w_{i_o}^*(y,x)<\hat{u}_{i_o}(\tilde{y},\tilde{x})+\epsilon\ \text{and}\ \tilde{w}_{i_o}(y,x)<\hat{u}_{i_o}(\tilde{y},\tilde{x})+\epsilon
 		\end{gather}
 	and from the last, we obtain $ \hat{u}_{i_o}(y,x):=\max\{w^*_{i_o}(y,x),\tilde{w}_{i_o}(y,x)\}<\hat{u}_{i_o}(\tilde{y},\tilde{x}) +\epsilon$
	 	\item Let $ (\tilde{y},\tilde{x})\in\partial Q_R $ As before, due to the fact that $ w^*_{i_o} $ and $ \tilde{w}_{i_o} $ are upper semicontinuous at point $ (\tilde{y},\tilde{x}) $, for $ \epsilon>0 $ randrom the following holds:
	 	\begin{gather}
	 		\text{there exists\ } \delta_1>0:\ \text{for all\ } (y,x)\in B_{\rho_2}\left((\tilde{y},\tilde{x}),\delta_1 \right)\cap \left([0,L]\times\bar{\Omega} \right)\nonumber\\
	 		w^*_{i_o}(y,x)<w^*_{i_o}(\tilde{y},\tilde{x})+\epsilon\leq\max\{w^*_{i_o}(\tilde{y},\tilde{x}),\tilde{w}_{i_o}(\tilde{y},\tilde{x})\}+\epsilon=\hat{u}_{i_o}(\tilde{y},\tilde{x})+\epsilon\label{relation_aghjk_1}\\
	 		\text{there exists\ } \delta_2>0:\ \text{for all\ } (y,x)\in B_{\rho_2}\left((\tilde{y},\tilde{x}),\delta_2 \right)\cap \left([0,L]\times\bar{\Omega} \right)\nonumber\\
	 		\tilde{w}_{i_o}(y,x)<\tilde{w}_{i_o}(\tilde{y},\tilde{x})+\epsilon\leq \hat{u}_{i_o}(\tilde{y},\tilde{x})+\epsilon\label{relation_aghjk_2}.
	 	\end{gather}
 		We set $ \delta_o:=\{\delta_1,\delta_2\}>0. $ Then $ \text{for all\ } (y,x)\in B_{\rho_2}\left( (\tilde{y},\tilde{x}),\delta_o\right)\cap\left([0,L]\times\bar{\Omega} \right)   $ from $ (\ref{relation_aghjk_1}) $ and $ (\ref{relation_aghjk_2}) $ we obtain that 
 		\begin{gather}
 			\hat{u}_{i_o}(y,x)=\max\{w^*_{i_o}(y,x),\tilde{w}_{i_o}(y,x)\}<\hat{u}_{i_o}(\tilde{y},\tilde{x})+\epsilon.
 		\end{gather}
	 	\item Let $ (\tilde{y},\tilde{x})\in K:=\left(  [0,L]\times\bar{\Omega}\right) \setminus Q_R, $ by the definition of $ \hat{u}_{i_o} $ we obtain that $ \hat{u}_{i_o}(\tilde{y},\tilde{x})=w^*_{i_o}(\tilde{y},\tilde{x}). $ Because $ w^*_{i_o}  $ is upper semicontinuous at point $ (\tilde{y},\tilde{x}), $ for a random $ \epsilon>0, $ there exists $\delta>0$ such that $ \text{for all\ } (y,x)\in B_{\rho_2}\left( (\tilde{y},\tilde{x}),\delta \right)\cap\left( [0,L]\times\bar{\Omega}\right) \subset K, $
	 	\begin{gather}
	 		 \hat{u}_{i_o}(y,x)=w^*_{i_o}(y,x)<w^*_{i_o}(\tilde{y},\tilde{x})+\epsilon=\hat{u}_{i_o}(\tilde{y},\tilde{x})+\epsilon\nonumber.
	 	\end{gather} 
	 \end{itemize}
From the three cases, we conclude that $ \hat{u}_{i_o} $ is upper semicontinuous on its domain.
\end{proof}
Next, we proceed to the proof that $ \hat{u}:[0,L]\times\bar{\Omega}\to\R $ is a subsolution of $ (BVP). $\\
\begin{claim}
The function $ \hat{u}:[0,L]\times\bar{\Omega}\to\R^m $ with $ \hat{u}=(\hat{u}_1,\hat{u}_2,\dots,\hat{u}_{i_o},\dots\hat{u}_m)  $ is a subsolution of $ (BVP)$.
\end{claim}
\begin{proof}
Initially, we prove that $ \text{for all\ } j\neq i_o, \hat{u}_j $ is a subsolution of $ (BVP). $ We notice that 
\begin{gather}\label{main relation_1*}
	\text{for all\ } (y,x)\in(0,L)\times\Omega,\ \hat{u}_j(y,x)-\mathcal{M}_j\hat{u}(y,x)\leq w^*_j(y,x)-\mathcal{M}_j w^*(y,x)
\end{gather}
Indeed, for $ (y,x)\in (0,L)\times\Omega $ 
\begin{equation*}
	\hat{u}_j(y,x)-\mathcal{M}_j\hat{u}(y,x)\stackrel{\text{Def of}\  \hat{u}_j}{=}w^*_j(y,x)-\max\Bigl\{ \hat{u}_{\lambda}(y,x)-c_{j,\lambda}(y,x)\ :\  \lambda\in\{1,2,\dots,i_o,\dots,m\}\setminus\{j\} \Bigl\}\nonumber
\end{equation*}
but for each $ \lambda\in\{1,2,\dots,m\}\setminus\{i_o\},\ \hat{u}_{\lambda}=w^*_{\lambda}$ and $ \hat{u}_{i_o}\geq w^*_{i_o} $ on $ [0,L]\times\bar{\Omega}. $
Thus, we obtain
\begin{gather}
	\max\Bigl\{ \hat{u}_{\lambda}(y,x)-c_{j,\lambda}(y,x)\ : \ \lambda\in\{1,2,\dots,i_o,\dots,m\}\setminus\{j\} \Bigl\}\nonumber\\
	\geq\max\Bigl\{ w^*_{\lambda}(y,x)-c_{j,\lambda}(y,x)\ :\  \lambda\in\{1,2,\dots,i_o,\dots,m\}\setminus\{j\} \Bigl\}\nonumber
\end{gather}
and equivalently,
\begin{align}
	\hat{u}_j(y,x)-\mathcal{M}_j\hat{u}(y,x)&=w^*_j (y,x)-\max\Bigl\{ \hat{u}_{\lambda}(y,x)-c_{j,\lambda}(y,x)\ :\ \lambda\in\{1,2,\dots,i_o,\dots,m\}\setminus\{j\} \Bigl\}\nonumber\\
	&\leq w^*_j (y,x)-\max\Bigl\{ w^*_{\lambda}(y,x)-c_{j,\lambda}(y,x)\ :\ \lambda\in\{1,2,\dots,i_o,\dots,m\}\setminus\{j\} \Bigl\}\nonumber\\
	&=w^*_j (y,x)-\mathcal{M}_j w^*(y,x)\nonumber.
\end{align}
Then, for $ (y,x)\in(0,L)\times\Omega, $ and $ (a,p,X) \in \bar{J}^{2,+}\hat{u}_j(y,x)\stackrel{j\neq i_o}{=}\bar{J}^{2,+}w^*_j(y,x)$, from the definition of $ \hat{u}_j $ it follows, that
\begin{gather}\label{main relation_2*}
	F\left( y,x,\hat{u}_j(y,x),p,X\right) =F\left( y,x,w^*_j(y,x),p,X\right) 
\end{gather}
thus, we obtain
\begin{align}
	&\min\Big\{F\left( y,x,\hat{u}_j(y,x),p,X\right) ,\hat{u}_j(y,x)-\mathcal{M}_j\hat{u}(y,x)\Bigl\}\nonumber\\
	&=^{(\ref{main relation_2*})}\min\Big\{F\left( y,x,w^*_j(y,x),p,X\right) ,\hat{u}_j(y,x)-\mathcal{M}_j\hat{u}(y,x)\Bigl\}\nonumber\\
	&\leq^{(\ref{main relation_1*})} \min\Big\{F\left( y,x,w^*_j(y,x),p,X\right) ,w^*_j(y,x)-\mathcal{M}_j w^*(y,x)\Bigl\}\nonumber\\
	&\leq 0\ \text{(since}\ w^*_j\ \text{is a subsolution of (BVP)}).
\end{align}
Moreover, since $ w^*_j $ is a subsolution of (IBVP), we have for $ y=0, \text{for all\ } x\in\bar{\Omega},\\ \hat{u}_j(0,x)=w^*_j(0,x)\leq g_j(x)$ 
and $ \text{for all\ } (y,x)\in (0,L) \times\partial\Omega,\ \hat{u}_j(y,x)=w^*_j(y,x)\leq f_j(y,x)$. \\
Next, we prove that $ \hat{u}_{i_o}  $ is a subsolution of $ (BVP) $.
First, we remind that due to the fact that $ (\hat{y},\hat{x}) \in (0,L)\times\Omega $, and $ (\hat{y},\hat{x}) $ is an internal point of $ Q_R, $ we are able to choose an appropriate radious $ R $ such that $ Q_R\subset(0,L)\times\Omega. $ Then by the definition of $ \hat{u}_{i_o}, $ and the fact that $ w^*_{i_o} $ is a subsolution of $ (BVP) $ we obtain that
\begin{itemize}
	\item $\text{for all\ } x\in\bar{\Omega},\ \hat{u}_{i_o}(0,x)=w^*_{i_o}(0,x)\leq g_{i_o}(x)$
	\item $\text{for all\ } (y,x)\in(0,L)\times\partial\Omega,\ \hat{u}_{i_o}(y,x)=w^*_{i_o}(y,x)\leq f_{i_o}(y,x)$.
\end{itemize}
Now, let $ (y,x)\in(0,L)\times\Omega, $ and $ (a,p,X)\in \bar{J}^{2,+}\hat{u}_{i_o}(y,x) $. There are two main cases: \\$ (y,x)\in(0,L)\times\Omega\setminus Q_R $ or $ (y,x)\in Q_R $. In the first case, by the definition of $ \hat{u}_{i_o} $, we have that $ \hat{u}_{i_o}(y,x)=w^*_{i_o}(y,x) $ and, since $ w^*_{i_o} $ is a subsolution we have the desired inequality
\begin{gather}
	\min\Bigl\{ F\left( y,x,\hat{u}_{i_o}(y,x),p,X\right),\hat{u}_{i_o}(y,x)-\mathcal{M}_{i_o}\hat{u}(y,x) \Bigl\}\leq 0\nonumber.
\end{gather}
For the second  case, we have
\begin{gather}
	\hat{u}_{i_o}(y,x)=\max\{w^*_{i_o}(y,x),\tilde{w}_{i_o}(y,x)\}\nonumber.
\end{gather}
In this situation, there are two possible subcases: $ \hat{u}_{i_o}(y,x)=w^*_{i_o}(y,x) $ or $ \hat{u}_{i_o}(y,x)=\tilde{w}_{i_o}(y,x) $. We notice that 
\begin{align}
	\mathcal{M}_{i_o}\hat{u}(y,x)=\max_{\lambda\neq i_o}\{\hat{u}_{\lambda}(y,x)-c_{i_o,\lambda}(y,x)\}&=\max_{\lambda\neq i_o}\{w^*_{\lambda}(y,x)-c_{i_o,\lambda}(y,x)\}\nonumber\\
	&=\mathcal{M}_{i_o}w^*(y,x).	\nonumber
\end{align}
 Also, from $ (\ref{claim_subsolution property}) $, we receive that 
\begin{gather}
	\min\{F(y,x,\tilde{w}_{i_o}(y,x),D_x \tilde{w}_{i_o}(y,x),D^2_{xx} \tilde{w}_{i_o}(y,x)), \tilde{w}_{i_o}(y,x)-\mathcal{M}_{i_o} w^*(y,x)\}\leq 0\nonumber.
\end{gather}
Using the above we obtain that
\begin{gather}
	\min\{F(y,x,\hat{u}_{i_o}(y,x),p,X),\hat{u}_{i_o}(y,x)-\mathcal{M}_{i_o} \hat{u}(y,x)\}\leq 0\nonumber.
\end{gather}

\end{proof}

In the next part, we reach to a contradiction. In specific, we prove that function $ \hat{u}_{i_o} $ is strictly greater than $ w_{i_o}, $ which is a contradiction by the definition of $ w_{i_o}.  $ Previously, we saw that $ w^*_{i_o}=w_{i_o}.  $ So it follows that $ (w^*_{i_o})_*=w_{*,i_o} $. From the definition of $ (w^*_{i_o})_* $ there exists a sequence
\begin{gather}\label{approaching sequence}
	\left( y_n,x_n,w^*_{i_o}(y_n,x_n)\right) \xrightarrow{n\to\infty}\left( \hat{y},\hat{x},(w^*_{i_o})_{*}(\hat{y},\hat{x})\right)=\left( \hat{y},\hat{x},(w_{*,i_o})(\hat{y},\hat{x})\right).
\end{gather} 
Then, it holds that, for each $ n\in\N, $
\begin{align}
	\tilde{w}_{i_o}(y_n,x_n)&=w_{*,i_o}(\hat{y},\hat{x})+h(y,x,\delta^*)\beta(\delta^*)+\alpha_o (y_n-\hat{y})+\left\langle p_o,x_n-\hat{x}\right\rangle+\nonumber\\
	 &+\frac{1}{2}\left\langle X_o(x_n-\hat{x}),(x_n-\hat{x}) \right\rangle.
\end{align}
Using now (\ref{negativity_rel}), and the fact that $ h $ is a continuous function, we obtain
\begin{gather}
	\tilde{w}_{i_o}(y_n,x_n)\xrightarrow{n\to\infty}w_{*,i_o}(\hat{y},\hat{x})+h(\hat{y},\hat{x},\delta^*)\beta(\delta^*)>^{(\ref{negativity_rel})} w_{*,i_o}(\hat{y},\hat{x})=^{(\ref{approaching sequence})}\lim_{n\to\infty}w^*_{i_o}(y_n,x_n).
\end{gather}
Finally, we receive that,
\begin{gather}
\lim_{n\to\infty}\tilde{w}_{i_o}(y_n,x_n)>\lim_{n\to\infty}w^*_{i_o}(y_n,x_n).
\end{gather}
From the last, we receive that 
\begin{gather}
	\text{there exists\ } n_o\in\N,\text{for all\ } n\geq n_o, \tilde{w}_{i_o}(y_n,x_n)>w^*_{i_o}(y_n,x_n).
\end{gather}
Due to the fact that, $ (\hat{y},\hat{x}) $ is an internal point of $ Q_R $, there exists $ \tilde{n}\in\N $ such that\\ $ (y_n,x_n)\in Q_R $. We set $ n_o^*:=\max\{n_o,\tilde{n}\}, $ we observe by the definition of $ \hat{u}_{i_o}  $, that $ \text{for all\ } n\geq n_o^*, $
\begin{gather}
	\hat{u}_{i_o}(y_n,x_n) :=\max\{w^*_{i_o}(y_n,x_n),\tilde{w}_{i_o}(y_n,x_n)\}=\tilde{w}_{i_o}(y_n,x_n)>w^*_{i_o}(y_n,x_n)=w_{i_o}(y_n,x_n)
\end{gather}
At this point, we have a contradiction from the definition of $ w_{i_o} $ and from the fact that $ \hat{u}_{i_o} $ is a subsolution of $ (BVP). $\\
Next, we consider the case where $ w_{*,i_o} $ fails to be supersolution of the $ (BVP) $ violating the boundary condition i.e
\item $ \text{there exist}\ (\hat{y},\hat{x}) \in(0,L)\times\partial{\Omega}$ such that 
		\begin{gather}
			w_{*,i_o}(\hat{y},\hat{x})<f_{i_o}(\hat{y},\hat{x})\nonumber
		\end{gather}
	From the definition of $ \tilde{w}_{i_o} $, we obtain that 
	$ \tilde{w}_{i_o}(\hat{y},\hat{x}) =w_{*,i_o}(\hat{y},\hat{x})+\beta(\delta)h(\hat{y},\hat{x},\delta)$. For the given functions $ \beta $ and $ h $, we select a sufficently small parameter $ \delta>0 $ such that 
	\begin{gather}\label{boundary condition_perron}
		\tilde{w}_{i_o}(\hat{y},\hat{x})=w_{*,i_o}(\hat{y},\hat{x})+\beta(\delta)h(\hat{y},\hat{x},\delta)<f_{i_o}(\hat{y},\hat{x}).
	\end{gather}
Then, from continuity of $ \tilde{w}_{i_o}  $ and $ f_{i_o} $ at point $ (\hat{y},\hat{x}) $ we receive from $ (\ref{boundary condition_perron}) $, that there exists an appropriate $ r>0 $ such that $ \text{for all\ } (y,x)\in B_{\rho_2}\left((\hat{y},\hat{x}) ,r\right)\cap\left( (0,L)\times\partial\Omega\right) ,\ \tilde{w}_{i_o}(y,x)<f_{i_o}(y,x).  $ As in the previous analysis, we select appropriate functions $ \beta\ \text{and} h $ and radious $ R, $ such that $ \tilde{w}_{i_o} $ satisfies the subsolution property of claim $ (\ref{claim_subsolution property}) $ for $ (y,x)\in\Omega_{L} $ sufficently close to $ (\hat{y},\hat{x}). $ Then, we deduce that $ \hat{u}_{i_o} $ is a subsolution of the problem $ (BVP) $ which dominates $ w^*_{i_o}=w_{i_o}, $ which again is a contradiction relative to the definition of $ w_{i_o}. $
\end{itemize} 

	\end{proof}

	\begin{claim}\label{wi_continuous}
		The function $ w_i:[0,L]\times\bar{\Omega}\to\R$ is continuous on each point of $ [0,L)\times\bar{\Omega} $.
	\end{claim}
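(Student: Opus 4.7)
The plan is to deduce continuity as an immediate corollary of what has already been extracted within Claim \ref{viscosity_property}, rather than repeating any technical viscosity argument. All the machinery is in place once one observes that the equality of $w_i$ with both its lower and upper semicontinuous envelopes has essentially been proved en route to showing the viscosity property.

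Concretely, I would proceed as follows. First, I recall that in Claim \ref{viscosity_property} it was established that $w^{*}_i$ is a viscosity subsolution of (BVP) and $w_{*,i}$ is a viscosity supersolution of (BVP). Applying the Comparison Principle (Theorem \ref{Comparison Principle}) to the pair $(w^{*},w_{*})$ yields
\begin{gather}
w^{*}_i(y,x) \leq w_{*,i}(y,x), \quad \text{for all\ } (y,x)\in[0,L)\times\bar{\Omega}. \nonumber
\end{gather}
On the other hand, from the definitions of the semicontinuous envelopes, recalled in $(\ref{perron_1})$ and $(\ref{perron_2})$, one always has $w_{*,i}\leq w_i\leq w^{*}_i$ on $[0,L]\times\bar{\Omega}$. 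Combining these two chains of inequalities forces
\begin{gather}
w_{*,i}(y,x)=w_i(y,x)=w^{*}_i(y,x), \quad \text{for all\ } (y,x)\in[0,L)\times\bar{\Omega}. \nonumber
\end{gather}

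Finally, since $w_{*,i}\in LSC([0,L]\times\bar{\Omega})$ and $w^{*}_i\in USC([0,L]\times\bar{\Omega})$ by their very construction, the identification above realizes $w_i$ on the set $[0,L)\times\bar{\Omega}$ simultaneously as a lower semicontinuous and as an upper semicontinuous function. A function which is both lower and upper semicontinuous at a point is continuous there, so $w_i$ is continuous at every $(y,x)\in[0,L)\times\bar{\Omega}$, which is the desired conclusion.

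There is no substantive obstacle in this last step, as the heavy lifting (the Perron sub/supersolution argument and the comparison principle) has been carried out in the preceding claims. The only point that deserves to be flagged in the write-up is that the comparison inequality $w^{*}_i\leq w_{*,i}$ holds on $[0,L)\times\bar{\Omega}$ rather than on the full closed cylinder $[0,L]\times\bar{\Omega}$; this is precisely why continuity is asserted only on $[0,L)\times\bar{\Omega}$, in accordance with the notion of viscosity solution adopted in the paper.
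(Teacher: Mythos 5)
Your proposal is correct and follows essentially the same route as the paper: the equality $w_{*,i}=w_i=w_i^{*}$ on $[0,L)\times\bar{\Omega}$ (obtained in Claim \ref{viscosity_property} from the comparison principle applied to the subsolution $w^{*}$ and the supersolution $w_{*}$, together with $w_{*,i}\leq w_i\leq w_i^{*}$) immediately yields continuity. The paper only adds the trivial topological remark that continuity of the restriction $w_i|_{[0,L)\times\bar{\Omega}}$ upgrades to continuity of $w_i$ at each point with $y<L$, since a small ball about such a point in $[0,L]\times\bar{\Omega}$ is contained in $[0,L)\times\bar{\Omega}$ — a point you acknowledge implicitly when flagging why the conclusion is restricted to $[0,L)\times\bar{\Omega}$.
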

	\begin{proof}
		From the claim $ \ref{viscosity_property} $, we have the result that $ w_{*,i}(y,x)=w_i(y,x)=w^*_i(y,x),\ \\ \text{for all\ }(y,x)\in[0,L)\times\bar{\Omega} $. So the restriction of $ w_i $ on the set $ [0,L)\times\bar{\Omega} $, i.e $ w_i |_{[0,L)\times\bar{\Omega}} $ is continuous. Then, from the fact that for each point $ (y,x)\in[0,L)\times\bar{\Omega} $ there exists $ \delta>0 $ such that 
		$ B_{\rho_2}\left( (y,x),\delta\right) \cap \left([0,L)\times\bar{\Omega} \right) = B_{\rho_2}\left( (y,x),\delta\right) \cap \left([0,L]\times\bar{\Omega} \right) $, combined with the continuity of the restriction $ w_i|_{[0,L)\times\bar{\Omega}}  $ on the point $ (y,x), $ we also receive
		\footnote{Let $ (X,\rho) $ be a metric space, $ A\subset B\subset X,\ x_o\in A $ and $ f:B\to\R.  $ If the following holds\\ \begin{gather}
				\exists\delta>0,\ A\cap B_{\rho}(x_o,\delta)=B\cap B_{\rho}(x_o,\delta) \nonumber
			\end{gather} and the restriction $ f|_A  $ is continuous on $ x_o $, then the extension $ f $ also is continuous on $ x_o. $} 
			the continuity of $ w_i $ at $ (y,x). $
		From the above, it is extracted that $ w_i:[0,L]\times\bar{\Omega}\to\R $ is a continuous function on each point of $ [0,L) \times\bar{\Omega}.$
	\end{proof}
	From the last two claims, it is extracted that the function $ w_i:[0,L]\times\bar{\Omega}\to\R $ satisfies the definition of viscosity solution of $ (BVP). $
\end{proof}
\section*{Acknowledgments}
The authors would like to thank the anonymous referee for carefully reading the manuscript and for his/her valuable comments. This work was supported by the University of Cyprus research funds.

\begin{tabular}{l}
Savvas Andronicou\\ University of Cyprus \\ Department of Mathematics \& Statistics \\ P.O. Box 20537\\
Nicosia, CY- 1678 CYPRUS
\\ {\small \tt andronikou.savvas@ucy.ac.cy}
\end{tabular}
\begin{tabular}{lr}
Emmanouil Milakis\\ University of Cyprus \\ Department of Mathematics \& Statistics \\ P.O. Box 20537\\
Nicosia, CY- 1678 CYPRUS
\\ {\small \tt emilakis@ucy.ac.cy}
\end{tabular}


\begin{thebibliography}{99999}
	\bibitem{A16} G. Aleksanyan, \emph{Optimal regularity in the optimal switching problem}, Ann. I. H.  Poincar\'e-AN 33 (2016) 1455-1471.
	%
	\bibitem{ADPS09} T. Arnarson, B. Djehiche, M. Poghosyan and H. Shahgholian,
	\emph{A PDE approach to regularity of solutions to finite horizon optimal switching problems}
	Nonlinear analysis: Theory, Methods \& Applications, {\bf 71} (2009), 6054-6067.
	%
	\bibitem{CS} L. Caffarelli, L. Silvestre, \emph{An Extension Problem Related to the Fractional Laplacian}, (2007), Comm. Partial Differential Equations  32:8, 1245-1260.	
	%
	\bibitem{CIL} M. G. Crandal, H. Ishii, P. -L Lions, \textit{User's guide to viscosity solutions of second order partial differential equations,} Bulletin of the American Mathematical Society, 27 (1992), 1-67.
	%
	\bibitem{DH09} B. Djehiche, S. Hamadene,
	\emph{On A Finite Horizon Starting And Stopping Problem With Risk Of Abandonment},
	International Journal of Theoretical and Applied Finance, {\bf 12} (2009), 523-543.
	%
	\bibitem{DHP10} B. Djehiche, S. Hamadene, A. Popier,
	\emph{A Finite Horizon Optimal Multiple Switching Problem},
	SIAM Journal on Control and Optimization, {\bf 48} (2010),  2751-2770.
	%
	\bibitem{AF12} B. El-Asri and I. Fakhouri,
	\emph{Optimal multi-modes switching with the switching cost not necessarily positive},
	arXiv:1204.1683v1, 2012.
	%
	\bibitem{AH09} B. El-Asri and S. Hamadene,
	\emph{The finite horizon optimal multi-modes switching problem: The viscosity solution approach}
	Applied Mathematics \& Optimization, {60}(2009), 213-235.
	%
	\bibitem{EF79} L. C. Evans, A. Friedman, Optimal stochastic switching and the Dirichlet problem for the Bellman equation, Trans. Am. Math. Soc.
	253 (1979) 365-389.
	%
	\bibitem{HM} S. Hamadene, M. A. Morlais,\textit{Viscosity Solutions of a system of PDEs with Interconnected Obstacles and Switching Problem,} Applied Mathematics and Optimization 67. (2013), 163-196.
	%
	\bibitem{HZ10} S. Hamad\'ene, J. Zhang,
	\emph{Switching Problem and Related System of Reflected Backward SDEs},
	Stochastic Processes and their Applications, {\bf 120} (2010), 403-426.
	%
	\bibitem{HT07}	Y. Hu, S. Tang,
	\emph{Multi-dimensional BSDE with Oblique Reflection and Optimal Switching},
	Probability Theory and Related Fields, {\bf 147} (2010), 89-121.
	%
	\bibitem{kiamlee1}
	 Ki-Ahm Lee, \emph{Obstacle Problem for Nonlinear 2nd-Order Elliptic Operator}, (1998), Ph-Thesis, New York University.
	%
	\bibitem{kiamlee2} Ki-Ahm Lee, \emph{The Obstacle Problem for Monge-Ampere Equation}, (2001), Comm. Partial Differential Equations Volume 26, no 1-2, 33-42.
	%
	\bibitem{LNO14} N. Lundstr$\ddot{o}$m, K. Nystr$\ddot{o}$m, M. Olofsson, \emph{On systems of variational inequalities in the context of Optimal Switching Problems and Operators of Kolmogorov Type}, Annali di Mathematica Pura ed Applicata, August 2014, Volume 193, Issue 4, pp 1213-1247.
	
	\bibitem{LNO} N. Lundstr$\ddot{o}$m, K. Nystr$\ddot{o}$m, M. Olofsson, \textit{Systems of variational inequalities for non-local operators related to optimal switching problems: existence and uniqueness,} Manuscripta Mathematica 145, (2014), 407-432.
	%
	\bibitem{LNOO4} N. Lundstr$\ddot{o}$m, M. Olofsson,  \textit{Systems of fully nonlinear parabolic obstacle problems with Neumann boundary conditions,} arXiv preprint:2105.01983v1 (2021).
	%
    \bibitem{LNOO3} N. Lundstr$\ddot{o}$m, M.  Olofsson, T. $\ddot{O}$nskog, \textit{Existence, uniqueness and regularity of solutions to the systems of nonlocal obstacle problems related to optimal switching,} Journal of Mathematical Analysis and Applications 475.1 (2019):13-31.
    %
    \bibitem{ms1} E. Milakis, L. Silvestre, \emph{Regularity for fully nonlinear elliptic equations with Neumann boundary data}, (2006), Comm. Partial Differential Equations   Volume 31, no. 4-6, 1227-1252.
	%
	\bibitem{ms2} E. Milakis, L. Silvestre, \emph{Regularity for the nonlinear Signorini problem}, (2008), Advances in Math. 217 (2008), 1301-1312.
	%
	\bibitem{OL2021} M. Olofsson, T. $\ddot{O}$nskog, N.L.P. Lundstrom, \textit{Management strategies for run-of-river hydropower
		plants: an optimal switching approach}. Optimization and Engineering
	(2021): 1-25.
	%
	%
	%
	%
%
	%
	%
	%
	%

\end{thebibliography}
\end{document}